\newtheorem{theorem}{Theorem}[section]
\newtheorem{proposition}[theorem]{Proposition}
\newtheorem{corollary}[theorem]{Corollary}
\newtheorem{lemma}[theorem]{Lemma}
\theoremstyle{definition}
\newtheorem{definition}[theorem]{Definition}
\newtheorem{assumption}[theorem]{Assumption}
\newtheorem{example}[theorem]{Example}
\theoremstyle{definition}
\newtheorem{remark}[theorem]{Remark}
\newtheorem{notation}[theorem]{Notation}
\numberwithin{equation}{section}
\newcommand\bC{\mathds{C}}
\newcommand\bB{\mathds{B}}
\newcommand\bE{\mathds{E}}
\newcommand\bN{\mathds{N}}
\newcommand\bP{\mathds{P}}
\newcommand\bR{\mathds{R}}
\newcommand{\one}{\mathds 1}
\newcommand\cB{\mathcal{B}}
\newcommand\cC{\mathcal{C}}
\newcommand\cD{\mathcal{D}}
\newcommand\cF{\mathcal{F}}
\newcommand\cP{\mathcal{P}}
\newcommand\cO{\mathcal{O}}
\newcommand{\nnrm}[2]{\ensuremath{\| #1 \|_{#2}}}
\newcommand{\gnnrm}[2]{\ensuremath{\big\| #1 \big\|_{#2}}}
\renewcommand{\epsilon}{\ensuremath{\varepsilon}}
\renewcommand{\geq}{\ensuremath{\geqslant}}
\renewcommand{\leq}{\ensuremath{\leqslant}}
\newcommand{\dl}{\ensuremath{\mathrm{d}}}
\newcommand{\supp}{\ensuremath{\mathop{\operatorname{supp}}}}
\newcommand{\bo}{\ensuremath{\mathscr L}}
\newcommand{\tr}{\ensuremath{\mathop{\operatorname{Tr}}}}
\title
{
Weak error analysis via functional It\^o calculus
}
\author{Mih\'aly Kov\'acs and Felix Lindner}
\date{}
\begin{document}
\allowdisplaybreaks
\maketitle

\begin{abstract}
We consider autonomous stochastic ordinary differential equations (SDEs) and weak approximations of their solutions for a general class of sufficiently smooth path-dependent functionals $f$. Based on tools from functional It\^o calculus, such as the functional It\^o formula and functional Kolmogorov equation,
we derive a general representation formula for the weak error
$\bE(f(X_T)-f(\tilde X_T))$, where $X_T$ and $\tilde X_T$ are the paths of the solution process
and its approximation
up to time $T$. The functional $f\colon C([0,T],\bR^d)\to \bR$ is assumed to be twice continuously Fréchet differentiable with derivatives of polynomial growth.
The usefulness of the formula is demonstrated in the one dimensional setting
by showing that if the solution to the SDE is approximated via the linearly time-interpolated explicit Euler method, then the rate of weak convergence for sufficiently regular $f$ is $1$.
\end{abstract}
\bigskip
\textbf{Keywords:} Functional It\^o calculus, stochastic differential equation, Euler scheme, weak error, path-dependent functional
\\[\medskipamount]
\textbf{MSC 2010:} 60H10, 60H35, 65C30

\section{Introduction}
\label{sec:Introduction}

Let $(W(t))_{t\geq0}$ be an $m$-dimensional Wiener process and $(X(t))_{t\geq0}$ be the strong solution to a stochastic differential equation (SDE, for short) of the form
\begin{equation}\label{eq:SDE}
\dl X(t) = b(X(t))\,\dl t + \sigma(X(t))\,\dl W(t)
\end{equation}
with initial condition $X(0)=\xi_0\in\bR^d$. The functions $b\colon\bR^d\to\bR^d$ and $\sigma\colon\bR^d\to \bR^{d\times m}$ are assumed to be smooth (i.e., $C^\infty$-functions) such that all derivatives of order $\geq1$ are bounded and $\sigma$ satisfies a non-degeneracy condition, see Section~\ref{sec:preliminaries} for details. Fix $T\in(0,\infty)$ and let $(Y(t))_{t\in[0,T]}$ be a process with continuous sample paths arising from a numerical discretization of \eqref{eq:SDE} which approximates $X$ on $[0,T]$. Let $X_T$ and $Y_T$ denote the $C([0,T],\bR^d)$-valued random variables $\omega\mapsto X(\cdot,\omega)|_{[0,T]}$ and $\omega\mapsto Y(\cdot,\omega)=Y(\cdot,\omega)|_{[0,T]}$, where $X(\cdot,\omega)$ and $Y(\cdot,\omega)$ are the trajectories $t\mapsto X(t,\omega)$ and $t\mapsto Y(t,\omega)$.  In this article, we are interested in analyzing the weak approximation error
\begin{equation}\label{eq:errorYX}
\bE\big(f(Y_T)-f(X_T)\big),
\end{equation}
for sufficiently smooth path-dependent functionals $f\colon C([0,T],\bR^d)\to\bR$. To this end, suppose that we are given a further process $(\tilde X(t))_{t\in[0,T]}$ solving an SDE of the form
\begin{equation}\label{eq:SDEtilde}
\dl \tilde X(t) = \tilde b(t,\tilde X_t)\,\dl t + \tilde \sigma(t,\tilde X_t)\,\dl W(t)
\end{equation}
with initial condition $\tilde X(0)=X(0)=\xi_0\in\bR^d$, where $\tilde b(t,\cdot)\colon C([0,t],\bR^d)\to\bR^d$ and $\tilde\sigma(t,\cdot)\colon C([0,t],\bR^d)\to\bR^{d\times m}$, $t\in[0,T]$, are path-dependent coefficients and $\tilde X_t$ denotes the $C([0,t],\bR^d)$-valued random variable $\omega\mapsto \tilde X(\cdot,\omega)|_{[0,t]}$. If the coefficients $\tilde b$ and $\tilde\sigma$ are chosen in such a way that the error $\bE(f(Y_T)-f(\tilde X_T))$ has a simple structure and can be handled relatively easily, then the problem of analyzing  \eqref{eq:errorYX} essentially reduces to analyzing the weak error
\begin{equation}\label{eq:errortildeXX}
\bE\big(f(\tilde X_T)-f(X_T)\big).
\end{equation}
Our main result, Theorem~\ref{thm:errorRepresentation}, provides a representation formula for the error \eqref{eq:errortildeXX} which is suitable to derive explicit convergence rates for numerical discretization schemes. It is valid under the assumption that $f\colon C([0,T],\bR^d)\to\bR$ is twice continuously Fréchet differentiable and $f$ and its derivatives have at most polynomial growth, $C([0,T],\bR^d)$ being endowed with the uniform norm. The proof is based on tools from functional Itô calculus, such as the functional Itô formula and functional backward Kolmogorov equation, cf.~\cite{BCC16, ConFou10a, ConFou10, ConFou13, ConLu15, Dup09, Fou10}.

As a concrete application, we consider for $d=m=1$ the explicit Euler-Maruyama scheme with maximal step-size $\delta>0$. In order to construct a process $Y$ with computable sample paths, we linearly interpolate the output of the scheme between the nodes. This process, however, does not satisfy an SDE such as \eqref{eq:SDEtilde}. Therefore, we also consider a stochastic interpolation $\tilde{X}$ of the scheme via Brownian bridges which is not feasible for numerical computations but satisfies \eqref{eq:SDEtilde} with suitably chosen coefficients $\tilde{b}$ and $\tilde{\sigma}$. 
Using a Lévy-Ciesielsky type expansion of Brownian motion, we show in Proposition~\ref{prop:yt} that the error $\bE(f(Y_T)-f(\tilde X_T))$ is $O(\delta)$
whenever $f\colon C([0,T],\bR)\to\bR$ is twice continuously Fréchet differentiable and its derivatives have at most polynomial growth. 
For the analysis of the error
$
\bE(f(\tilde X_T)-f(X_T)),
$
we use the error representation formula 
from Theorem~\ref{thm:errorRepresentation}
and show, in Proposition \ref{prop:Euler_error_2}, that it is also $O(\delta)$ if $f\colon C([0,T],\bR)\to\bR$ is four times continuously Fréchet differentiable with derivatives of polynomial growth. As a direct consequence, our main result concerning the linearly interpolated explicit Euler-Mayurama scheme, Theorem \ref{thm:euler}, is that if $f\colon C([0,T],\bR)\to\bR$ is four times continuously Fréchet differentiable and its derivatives have at most polynomial growth, then the weak error $\bE(f(X_T)-f(Y_T))$ is of order $O(\delta)$. The result can be used, for instance, to show that the bias $\operatorname{Cov}(Y(t_1),Y(t_2))-\operatorname{Cov}(X(t_1),X(t_2))$ for the approximation of covariances of the solution process is $O(\delta)$, see Example~\ref{ex:covariance}.

There exists an extensive literature on strong and weak convergence rates of numerical approximations schemes for SDEs, see, e.g., \cite{GraTal13,KP92,MT04} and the references therein. The interplay of strong and weak approximation errors is particularly important for the analysis of multilevel Monte Carlo methods. It is well-known that for various discretization schemes and sufficiently smooth test functions the order of weak convergence exceeds the order of strong convergence and is, in many cases, twice the strong order. However, the weak error analysis of SDEs is often restricted to functionals which only depend on the value of the solutions process at a fixed time, say $T$. Such functionals are of the form $f(X_T)=\varphi(X(T))$ for a function $\varphi\colon\bR^d\to\bR$.
There are not so many publications treating convergence rates of weak approximation errors 
for path-dependent functionals of the solution process as in \eqref{eq:errorYX} and \eqref{eq:errortildeXX}. 
In \cite{GobLab08} Malliavin calculus methods are used to derive estimates for the convergence of the density of the solution to the Euler-Maruyama scheme, leading to $O( \delta)$ weak convergence for a specific class of integral type functionals.
Compositions of smooth functions and non-smooth integral type functionals are treated in \cite{HMN14} for an exact simulation of the solution process at the time discretization points in a one-dimensional setting. Weak convergence rates for Euler-Maruyama approximations of non-smooth path-dependent functionals of solutions to SDEs with irregular drift and constant diffusion coefficient are derived in \cite{NT15}  via a suitable change of measure, the obtained order of convergence being at most $O(\delta^{1/4})$. Weak convergence results for approximations of path-dependent functionals of SDEs without explicit rates of convergence can be found in several articles, e.g., in \cite{BayFri13,SYZ13}. In \cite{ConLu15} the authors use methods from functional Itô calculus to analyze Euler approximations of path-dependent functionals of the form $f(X_T)$ and to derive convergence rates for the corresponding strong error $\bE(|f(X_T)-f(\tilde X_T)|^{2p})$, $p\geq1$.
This list of references is only indicative and we also refer to the references in the mentioned articles.
In this paper, we present a new and general method for the weak error analysis of numerical approximations of a large class of sufficiently smooth, path-dependent functionals of solutions to SDEs of the type \eqref{eq:SDE}. Our approach is based on the  functional Itô calculus as presented in \cite{BCC16, ConFou13, Fou10} and is in a sense a natural, albeit highly nontrivial, generalization of the `classical' approach to the analysis of weak approximation errors based on Itô's formula and backward Kolmogorov equations, cf., e.g., \cite{TalTub90} or \cite[Section~14.1]{KP92}.

Let us remark that weak error estimates are also available for SPDEs, see, e.g., \cite{CJK14,KovLarLin11,KovLarLin13,KovLinSch15,KovPri14b,LinSch13}. In particular,
 path-dependent functionals of solutions to semilinear SPDEs with additive noise are considered in \cite{AndKovLar16,BHS16}. The analysis in \cite{AndKovLar16} is based on Malliavin calculus and applies to certain compositions of smooth functions and integral type functionals. A quite general class of  path-dependent $C^2$-functionals is treated in \cite{BHS16}, based on a second order Taylor expansion of the composition of the test function and an underlying Itô map. A difference to our results (apart from the infinite dimensionality of the state space) is that the analysis in \cite{BHS16} is restricted to spatial discretizations, additive noise, and the test functions are assumed to be bounded.

To present the main idea behind our approach, let $t\geq0$ and for a (deterministic) càdlàg path $x\in D([0,t],\bR^d)$ let the process $X^{t,x}=(X^{t,x}(s))_{s\geq0}$ be defined by
\[X^{t,x}(s):=
\begin{cases}
x(s)&\text{if }s\in[0,t)\\
X^{t,x(t)}(s)&\text{if }s\in[t,\infty)
\end{cases},
\]
where $(X^{t,x(t)}(s))_{s\in[t,\infty)}$ is the strong solution to Eq.~\eqref{eq:SDE} started at time $t$ from $x(t)\in\bR^d$.
For $\epsilon>0$ define a family of functionals $F^\epsilon=(F^\epsilon_t)_{t\in[0,T]}$ by
\begin{equation}\label{eq:defFteps}
F_t^\epsilon(x):=\bE f^\epsilon(X^{t,x}_{T}),\qquad x\in D([0,t],\bR^d),
\end{equation}
where $X^{t,x}_T$ denotes the path of $X^{t,x}$ up to time $T$ and $f^\epsilon$ is a suitably regularized version of the path-dependent functional $f$ such that
$$
\bE\big(f(\tilde X_T)-f(X_T)\big)=\lim_{\epsilon\to0}\bE\big(f^\epsilon(\tilde X_T)-f^\epsilon(X_T)\big).
$$
Then, as we assume that $X(0)=\tilde X(0)=\xi_0\in\bR^d$, it follows that
\begin{align*}
\bE\big(f^\epsilon(\tilde X_T)-f^\epsilon(X_T)\big)
=\bE\big(F^\epsilon_T(\tilde X_T)-F^\epsilon_0(\tilde X_0)\big).
\end{align*}
After proving that $F^\epsilon$ is regular enough in a suitable sense we apply the functional Itô formula from Theorem \ref{thm:functionalIto} to $F^\epsilon_T(\tilde X_T)-F^\epsilon_0(\tilde X_0)$ and use a backward functional Kolmogorov equation from Theorem \ref{thm:functionalKolmogorov} to eliminate a term which cannot be controlled as $\epsilon\to 0$. Finally we arrive at our explicit representation formula for the weak error \eqref{eq:errortildeXX} in terms of $\bE(f^\epsilon(\tilde X_T)-f^\epsilon(X_T))$, stated in Theorem \ref{thm:errorRepresentation}, for $f\colon C([0,T],\bR^d)\to\bR$ twice continuously Fréchet differentiable with at most polynomially growing derivatives:
\begin{equation*}
\begin{aligned}
&\bE\big(f^\epsilon(\tilde X_T)-f^\epsilon(X_T)\big)\\
&= \bE\Bigg(\int_0^T\sum_{j=1}^d\big(\bE\big[Df^\epsilon(X^{t,x}_T)\,(\one_{[t,T]}D^{e_j}\!X^{t,x(t)}_T)\big]\big)\big|_{x=\tilde X_t}\, \big(\tilde b_j(t,\tilde X_t)-b_j(\tilde X(t))\big)\,\dl t \\
&\quad+\frac12\int_0^T\sum_{i,j,k=1}^d\Big\{\Big(\bE\Big[D^2f^\epsilon(X^{t,x}_T)\,\big(\one_{[t,T]}D^{e_i}X^{t,x(t)}_T,\,\one_{[t,T]}D^{e_j}X^{t,x(t)}_T\big)\\
&\quad+Df^\epsilon(X^{t,x})\,\big(\one_{[t,T]}D^{e_i+e_j}X^{t,x(t)}_T\big)\Big]\Big)\Big|_{x=\tilde X_t}\big(\tilde \sigma_{ik}\,\tilde \sigma_{jk}(t,\tilde X_t)-\sigma_{ik}\,\sigma_{jk}(\tilde X(t))\big)\Big\}\,\dl t\Bigg).
\end{aligned}
\end{equation*}
Here, $(e_i)_{i\in\{1,\ldots,d\}}$ is the canonical orthonormal basis of $\bR^d$ and, for a multi-index $\alpha\in\bN_0^d$, $D^{\alpha}X^{t,x(t)}=D^{\alpha}_\xi X^{t,\xi}|_{\xi=x(t)}$ denotes the corresponding partial derivative of the solution process $X^{t,\xi}$ started at time $t$ w.r.t.\ the initial condition $\xi\in\bR^d$, evaluated at $\xi=x(t)$, see Section~\ref{sec:inreg} for details.

The paper is organized as follows. In Section \ref{sec:preliminaries} we introduce some general notation used throughout the article, state the main assumptions on the coefficients in \eqref{eq:SDE} and \eqref{eq:SDEtilde} and also introduce the regularized versions $f^{\epsilon}$, $\epsilon>0$, of a functional $f\colon C([0,T],\bR^d)\to\bR$ via a mollification operator. Section \ref{sec:funIto} contains a short introduction to the notions and notations of the functional It\^o calculus and at the end of the section we also recall the functional It\^o formula as well the functional backward Kolmogorov equation. In Section \ref{sec:inreg} we prove results, crucial for what follows after, concerning the regularity of the solution of \eqref{eq:SDE} with respect to the initial data mainly in the uniform topology. Section \ref{sec:RegularityFFepsilon} is devoted to the study of the regularity of the functional $F^{\epsilon}$ and the explicit computation of its vertical and horizontal derivatives,
so that the functional It\^o formula and the functional backward Kolmogorov equation can be applied; the main findings are summarized in Theorem \ref{thm:regularityFepsilon}. In Section \ref{sec:fk}, using the regularity results from Section \ref{sec:RegularityFFepsilon} and the martingale property of $(F_t^{\epsilon}(X_t))_{t\in [0,T]}$
from Proposition \ref{prop:martingaleProperty}, we show in Corollary \ref{cor:kol} that $F^{\epsilon}$ satisfies a functional backward Kolmogorov equation. Theorem~\ref{thm:errorRepresentation} in Section \ref{sec:errep} contains our main result concerning the representation of the weak error $\bE(f^\epsilon(\tilde X_T)-f^\epsilon(X_T))$. As an important application of Theorem \ref{thm:errorRepresentation},  in Section \ref{sec:euler} we analyse the order of the weak error for the linearly interpolated explicit Euler-Maruyama scheme and the main result here is presented in Theorem \ref{thm:euler}. Finally, in the Appendix, we present a general convergence lemma, Lemma \ref{lem:LpConvImpliesWeakConv}, which is used extensively throughout the paper and also a result from the literature, Lemma \ref{lem:topsup}, concerning the topological support of the distribution $\bP_{X_T}$ of $X_T$ in $C([0,T],\bR^d)$.

\section{Preliminaries}
\label{sec:preliminaries}

In this section we describe some general notation used throughout the article, formulate the precise assumptions on the SDEs \eqref{eq:SDE} and \eqref{eq:SDEtilde} for $X$ and $\tilde X$, and introduce a  mollification operator $M^\epsilon$  that allows us to define suitable smooth approximations $f^\epsilon$ of a
given path-dependent functional $f\colon C([0,T],\bR^d)\to\bR$.
\bigskip

\emph{General notation.}
The natural numbers excluding and including zero are denoted by $\bN=\{1,2,\ldots\}$ and $\bN_0=\{0,1,\ldots\}$, respectively. Norms in finite dimensional real vector spaces are denoted by $|\cdot|$. We usually consider the Euklidean norm, e.g., $|\xi|=\sqrt{\xi_1^2+\ldots+\xi_d^2}$ for a vector $\xi=(\xi_1,\ldots,\xi_d)\in\bR^d$ or $|A|=\sqrt{\sum_{i,j}a_{ij}^2}$ for a matrix $A=(a_{ij})$, but the specific choice of the norm will not be important. The only exception are multi-indices $\alpha=(\alpha_1,\ldots,\alpha_d)\in\bN_0^d$, for which we set $|\alpha|:=\alpha_1+\ldots+\alpha_d$. The canonical orthonormal basis in $\bR^d$ is denoted by $(e_i)_{i\in\{1,\ldots,d\}}$.

By $C([a,b],\bR^d)$ and $D([a,b],\bR^d)$ we denote the spaces of continuous functions and càdlàg (right continuous with left limits) functions defined on an interval $[a,b]$ with values in $\bR^d$, respectively. Both spaces are endowed with the uniform norm, e.g., $\|x\|_{C([a,b];\bR^d)}=\sup_{t\in[a,b]}|x(t)|$. 

For a càdlàg path $x\in D([0,T],\bR)$ and $t\in[0,T]$, we denote by
\[x_t:=x|_{[0,t]}\in D([0,t],\bR^d)\]
the restriction of $x$ to $[0,t]$, whereas $x(t)\in\bR^d$ denotes the value of $x$ at $t$. Consistent with this notation, we will occasionally also write $x_t$ instead of $x$ for a given path $x\in D([0,t],\bR^d)$ in order to indicate the domain of definition. More generally, if $x$ is a càdlàg path defined on an arbitrary interval $I\subset[0,\infty)$ and if $t\in I$, then $x_t:=x|_{I\cap[0,t]}$ denotes the restriction of $x$ to $I\cap[0,t]$. Accordingly, if $Z=(Z(s))_{s\in[a,b]}$ or $Z=(Z(s))_{s\geq a}$ is an $\bR^d$-valued stochastic process with càdlàg paths and if $t\in[a,b]$ or $t\geq a$, we write $Z_t$ for the $D([a,t];\bR^d)$-valued random variable $\omega\mapsto Z(\cdot,\omega)|_{[a,t]}$, where $Z(\cdot,\omega)$ is a trajectory of $Z$. For instance, if $X^{t,\xi}$ is the strong solution to \eqref{eq:SDE} started at time $t\in[0,T]$ from $\xi\in\bR^d$, then $X^{t,\xi}_T$ denotes the $D([t,T];\bR^d)$-valued random variable $\omega\mapsto X^{t,\xi}(\cdot,\omega)|_{[t,T]}$.

Let $(U,\|\cdot\|_U)$ and $(V,\|\cdot\|_V)$ be two normed real vector spaces. We denote by $\bo(U,V)$ the space of bounded linear operators $T\colon U\to V$, endowed with the operator norm $\|T\|_{\bo(U,V)}:=\sup_{\|u\|_U\leq 1}\|Tu\|_V$. For $n\in\bN$, we write $\bo^{(n)}(U,V)$ for the space of bounded $n$-fold multilinear operators $T\colon U^n\to V$, endowed with the norm $\|T\|_{\bo^{(n)}(U,V)}:=\sup_{\|u_1\|_U\leq 1,\ldots,\|u_n\|_U\leq 1}\|T(u_1,\ldots,u_n)\|_V$. If $g\colon U\to V$ is $n$-times Fréchet differentiable, we write $D^ng(u)$ for the $n$-th Fréchet derivative of $g$ at $u\in U$ and consider it as an element of $\bo^{(n)}(U,V)$; by $(D^ng(u))(u_1,\ldots,u_n)\in V$ we denote the evaluation of $D^ng(u)$ at $(u_1,\ldots,u_n)\in U^n$. Specifically, if  $g\colon \bR^d\to V$ is $n$-times Fréchet differentiable and $\alpha\in\bN_0^d$ is a multi-index with $|\alpha|\leq n$, we write $D^\alpha g(\xi):=D_\xi^\alpha g(\xi):=\frac{\partial^{|\alpha|}}{\partial\xi_1^{\alpha_1},\ldots,\partial\xi_d^{\alpha_d}}g(\xi)\in V$ for the corresponding partial derivative at a point $\xi\in\bR^d$. We write $C^n(U,V)$ for the space space of $n$-times continuously Fréchet-differentiable functions from $U$ to $V$, and $C^n_p(U,V)$ is the subspace of $n$-times continuously Fréchet-differentiable functions $g\colon U\to V$ such that $g$ and its derivatives up to order $n$ have at most polynomial growth at infinity, i.e., $C^n_p(U,V)=\{g\in C^n(U,V):\exists\, C,q\geq1 \text{ such that }\|D^ng(u)\|_{\bo^{(n)}(U,V)}\leq C(1+\|u\|_U^q)\text{ for all }u\in U\}$.

Throughout the article, $C\in(0,\infty)$ denotes a finite constant which may change its value with every new appearance.
\bigskip

\emph{Main assumptions.}
Throughout the article, we suppose that the following assumptions hold.
All random variables and stochatic processes are assumed to be defined on a common filtered probability space $(\Omega,\cF,(\cF_t)_{t\geq0},\bP)$ satisfying the usual conditions. The process $(W(t))_{t\geq0}$ is a $\bR^m$-valued Wiener process w.r.t.\ the filtration $(\cF_t)_{t\geq0}$. Concerning the coefficients appearing in the SDEs \eqref{eq:SDE} and \eqref{eq:SDEtilde} for $X$ and $\tilde X$ we assume the following.

\begin{assumption}\label{ass:bsigma}
The functions $b\colon\bR^d\to\bR^d$ and $\sigma\colon\bR^d\to \bR^{d\times m}$ in Eq.~\eqref{eq:SDE} are $C^\infty$-functions such that all derivatives of order $\geq1$ are bounded. There exists a constant $c>0$ such that
$|\sigma(x)y|\geq c|y|$ for all $x\in\bR^d$ and $y\in\bR^m$.
\end{assumption}

Assumption~\ref{ass:bsigma} implies that for every initial condition $\xi\in\bR^d$ and $s\geq0$ there exists a unique stong solution $(X^{s,\xi}(t))_{t\in[s,\infty)}$ to Eq.~\eqref{eq:SDE} starting at time $s$ from $\xi$. By $(X(t))_{t\geq0}=(X^{0,\xi_0}(t))_{t\geq0}$ we denote the solution to \eqref{eq:SDE} starting at time zero in a fixed given starting point $\xi_0\in\bR^d$.

\begin{assumption}\label{ass:bsigmatilde}
The functions $\tilde b(t,\cdot)\colon C([0,t],\bR^d)\to\bR^d$ and $\tilde\sigma(t,\cdot)\colon C([0,t],\bR^d)\to\bR^{d\times m}$, $t\in[0,T]$, in Eq.~\eqref{eq:SDEtilde} are such that
\begin{itemize}
\item
the mapping $(t,x)\mapsto (b(t,x_t),\sigma(t,x_t))$ defined on $[0,T]\times C([0,T],\bR^d)$ is Borel-measurable (recall that $x_t=x|_{[0,t]}$);
\item
there exists a unique strong solution $(\tilde X(t))_{t\in[0,T]}$ to Eq.~\eqref{eq:SDEtilde} starting from $\xi_0$;
\item
the linear growth condition
$|\tilde b(t,x_t)|+|\tilde\sigma(t,x_t)|\leq C (1+\sup_{s\leq t}|x(s)|)$,
$t\in[0,T]$, $x\in C([0,T],\bR^d)$,
is fulfilled (with $C\in(0,\infty)$ independent of $x$ and $s$).
\end{itemize}
\end{assumption}

We note that the boundedness of the derivatives $Db$ and $D\sigma$ and the linear growth assumption on $\tilde b$ and $\tilde\sigma$ imply that
\begin{align}\label{eq:gr}
\bE\big(\sup_{t\in[0,T]}|X(t)|^p\big)+\bE\big(\sup_{t\in[0,T]}|\tilde X(t)|^p\big)<\infty
\end{align}
for all $p\geq1$. This is a consequence of the Burkholder inequality and Gronwall's lemma.
\bigskip

\emph{A mollification operator.}
In order to be able to apply the functional Itô calculus presented in Section~\ref{sec:funIto} to our problem in a convenient way, we associate to every path-dependent functional $f\colon C([0,T],\bR^d)\to\bR$ a family of `regularized' versions $f^\epsilon\colon D([0,T],\bR^d)\to\bR$, $\epsilon>0$, by setting
\begin{equation}\label{eq:deffepsilon}
f^\epsilon := f\circ M^\epsilon.
\end{equation}
Here,
\begin{equation}\label{eq:defMepsilon1}
M^\epsilon\colon D([0,T],\bR^d)\to C^\infty([0,T],\bR^d)
\end{equation}
is the mollification operator defined as follows:
Let $\tilde{\eta}\in C_c^\infty(\bR)$ be a standard mollifier (nonnegative, $\int\tilde{\eta}\dl t=1$, $\supp \tilde{\eta}\subset[-1,1]$) and set $\tilde{\eta}_\epsilon:=(\epsilon/2)^{-1}\tilde{\eta}((\epsilon/2)^{-1}\,\cdot\,)$ as well as $\eta_{\epsilon}(\cdot)=\tilde{\eta}_{\epsilon}(\cdot-\epsilon/2)$. Let $\overline x$ denote the extension of a path $x\in D([0,T],\bR^d)$ to $\bR$ given by \[\overline x(t):=x(0)\one_{(-\infty,0)}(t)+ x(t)\one_{[0,T]}(t)+x(T)\one_{(T,\infty)}(t).\]
Then we set
\begin{equation}\label{eq:defMepsilon2}
M^\epsilon x := (\eta_\epsilon*\overline x)|_{[0,T]},\quad x\in D([0,T],\bR^d),
\end{equation}
where $*$ denotes convolution, i.e.,
$(\eta_\epsilon*\overline x)(t)=\int_\bR\eta_\epsilon(t-s)\overline x(s)\,\dl s$.
Note that, in fact,
\[ (M^\epsilon x)(t)= \int_{t-\epsilon}^t\eta_\epsilon(t-s)\overline x(s)\,\dl s=\int_{-\epsilon}^T\eta_\epsilon(t-s)\overline x(s)\,\dl s,\quad t\in [0,T],\]
and that we have the convergence $M^\epsilon x\xrightarrow{\epsilon\searrow0}x$ in $C([0,T],\bR^d)$ for all $x\in C([0,T],\bR^d)$.

\section{Functional It\^o calculus}
\label{sec:funIto}

In this section we present some of the main notions and results from functional Itô calculus, see \cite{ConFou13} and compare also \cite{BCC16,ConFou10a, ConFou10, ConLu15, Dup09,Fou10}.
On $D([0,t],\bR^d)$ we consider the canonical $\sigma$-algebra $\cB_t$ generated by the cylinder sets of the form $A=\{x\in D([0,t],\bR^d): x(s_1)\in B_1,\ldots, x(s_n)\in B_n\}$, where $0\leq s_1\leq \ldots\leq s_n\leq t$, $B_i\in\cB(\bR^d)$, $i=1,\ldots,n$, and $n\in\bN$. Note that $\cB_t$ coincides with the Borel-$\sigma$-algebra induced by the uniform norm $\|\cdot\|_{D([0,t];\bR^d)}$.

\begin{definition}\label{def:nonanticipativeFunctional}
A {\it non-anticipative functional on $D([0,T],\bR^d)$} is a family $F=(F_t)_{t\in[0,T]}$ of mappings
\[F_t\colon D([0,t],\bR^d)\to\bR,\;x\mapsto F_t(x)\]
such that every $F_t$ is $\cB_t/\cB(\bR)$-measurable.
\end{definition}

We also consider non-anticipative functionals with index set $[0,T)$
as well as $\bR^n$-valued non-anticipative functionals.
These are defined analogously with the obvious modifications.
Recall that for a path $x\in D([0,T],\bR)$ and $t\in[0,T]$, we denote by
$x_t=x|_{[0,t]}\in D([0,t],\bR^d)$
the restriction of $x$ to $[0,t]$ and that, consistent with this notation, we may also write $x_t$ instead of $x$ for a given path $x\in D([0,t],\bR^d)$ in order to indicate the domain of definition.

For $h\geq0$, the {\it horizontal extension} $x_{t,h}\in D([0,t+h],\bR^d)$ of a path $x_t\in D([0,t],\bR^d)$ to $[0,t+h]$ is defined by
\begin{equation*}
x_{t,h}(s):=
\begin{cases}
x_t(s) &\text{if } s\in[0,t)\\
x_t(t) &\text{if } s\in[t,t+h].
\end{cases}
\end{equation*}
For $h\in\bR^d$, the {\it vertical perturbation} $x_t^h\in D([0,t],\bR^d)$ of a path $x_t\in D([0,t],\bR^d)$ is defined by
\begin{equation*}
x_t^h(s):=
\begin{cases}
x_t(s) &\text{if } s\in[0,t)\\
x_t(t)+h &\text{if } s=t.
\end{cases}
\end{equation*}

\begin{definition}
Let $F=(F_t)_{t\in[0,T]}$ be a non-anticipative functional on $D([0,T],\bR^d)$.
\begin{itemize}
\item[(i)]
For $t\in[0,T)$ and $x\in D([0,t],\bR^d)$, the {\it horizontal derivative of $F$ at $x$} is defined as
\begin{equation}\label{eq:horizontalDer}
\cD_t F(x):=\lim_{h\searrow0}\frac{F_{t+h}(x_{t,h})-F_t(x_t)}h
\end{equation}
provided that the limit exists.
If \eqref{eq:horizontalDer} is defined for all $t\in[0,T)$ and $x\in D([0,t],\bR^d)$, then $F$ is called {\it horizontally differentiable}. In this case, the mappings
\begin{equation*}
\cD_t F:D([0,t],\bR^d)\to\bR,\; x\mapsto \cD_t F(x),\qquad t\in[0,T),
\end{equation*}
define a non-anticipative functional $\cD F=(\cD_t F)_{t\in[0,T)}$, the {\it horizontal derivative of~$F$}.
\item[(ii)]
For $t\in[0,T]$ and $x\in D([0,t],\bR^d)$, the {\it vertical derivative of $F$ at $x$} is defined as
\begin{equation}\label{eq:verticalDer}
\nabla_x F_t(x):=\lim_{h\to 0}\Big(\frac{F_t(x_t^{he_1})-F_t(x_t)}h,\ldots,\frac{F_t(x_t^{he_d})-F_t(x_t)}h\Big)
\end{equation}
provided that the limit exists, where $(e_j)_{j=1,\ldots,d}$ is the canonical orthonormal basis in $\bR^d$.
If \eqref{eq:verticalDer} is defined for all $t\in[0,T]$ and $x\in D([0,t],\bR^d)$, then $F$ is called {\it vertically differentiable}. In this case, the mappings
\begin{equation*}
\nabla_x F_t:D([0,t],\bR^d)\to\bR^d,\; x\mapsto \nabla_x F_t(x),\qquad t\in[0,T],
\end{equation*}
define a non-anticipative functional $\nabla_x F=(\nabla_x F_t)_{t\in[0,T]}$, the {\it vertical derivative of~$F$}.
\end{itemize}
\end{definition}

In order to introduce a proper notion of (left-)continuity for non-anticipative functionals, one considers the following distance between two paths which are possibly defined on different time intervals.
For $t,\;t'\in[0,T]$, $x\in D([0,t],\bR^d)$ and $x'\in D([0,t'],\bR^d)$ we define
\begin{equation*}
d_\infty(x,x'):=|t-t'|+\sup_{s\in [0,T]}\big|x_{t,T-t}(s)-x'_{t',T-t'}(s)\big|.
\end{equation*}
We remark that $d_\infty$ is a metric on the set
\begin{equation*}
\Lambda:=\bigcup_{t\in[0,T]} D([0,t],\bR^d).
\end{equation*}

\begin{definition}\label{def:classes}
Let $F=(F_t)_{t\in[0,T]}$ be a non-anticipative functional on $D([0,T],\bR^d)$.
\begin{itemize}
\item[(i)]
$F$ is \emph{continuous at fixed times} if, for all $t\in[0,T]$, the mapping $F_t:D([0,t],\bR^d)\to\bR$ is continuous w.r.t.\ the uniform norm $\nnrm{\cdot}{D([0,t],\bR^d)}$.
\item[(ii)]
$F$ is \emph{continuous} if the mapping $\Lambda\ni x_t\mapsto F_t(x_t)\in\bR$ is continuous w.r.t.\ the metric $d_\infty$ on $\Lambda$, i.e., if
\begin{equation*}
\begin{aligned}
\forall\,t\in[0,T]\;\forall\,x\in D(&[0,t],\bR^d)\;\forall\,\varepsilon>0\;\exists\,\delta>0\;\forall\,t'\in[0,T]\;\forall\,x'\in D([0,t'],\bR^d)\;\\
&\big(d_\infty(x,x')<\delta \,\Rightarrow\, |F_t(x)-F_{t'}(x')|<\varepsilon\big).
\end{aligned}
\end{equation*}
The class of continuous non-anticipative functionals is denoted by $\bC^{0,0}([0,T])$.
\item[(iii)]
$F$ is \emph{left-continuous} if
\begin{equation*}
\begin{aligned}
\forall\,t\in[0,T]\;\forall\,x\in D(&[0,t],\bR^d)\;\forall\,\varepsilon>0\;\exists\,\delta>0\;\forall\,t'\in[0,t]\;\forall\,x'\in D([0,t'],\bR^d)\;\\
&\big(d_\infty(x,x')<\delta \,\Rightarrow\, |F_t(x)-F_{t'}(x')|<\varepsilon\big).
\end{aligned}
\end{equation*}
The class of left-continuous non-anticipative functionals is denoted by $\bC^{0,0}_l([0,T])$.
\item[(iv)]
F is \emph{boundedness-preserving} if
\begin{equation*}
\begin{aligned}
\forall\,R>\,&0\;\exists\,C>0\;\forall\,t\in[0,T]\;\forall\,x\in D([0,t],\bR^d)\\
&\big(\sup_{s\in[0,t]}|x(s)|\leq R \,\Rightarrow\, |F_t(x)|\leq C\big).
\end{aligned}
\end{equation*}
The class of boundedness-preserving non-anticipative functionals is denoted by $\bB([0,T])$.
\end{itemize}
\end{definition}

We will also use the above notions for $\bR^n$-valued non-anticipative functionals;
the corresponding definitions are analogous with the obvious modifications.

\begin{definition}\label{def:regularFunctionals}
For $k\in\bN$, we denote by $\bC^{1,k}_b([0,T])$ be the class of all left-continuous, boundedness-preserving, non-anticipative functionals $F=(F_t)_{t\in[0,T]}\in\bC_l^{0,0}([0,T])\cap\bB([0,T])$ such that
\begin{itemize}
\item $F$ is horizontally differentiable, the horizontal derivative $\cD F=(\cD_t F)_{t\in[0,T)}$ is continuous at fixed times, and the extension $(\cD_t F)_{t\in[0,T]}$ of $(\cD_t F)_{t\in[0,T)}$ by zero belongs to the class $\bB([0,T])$,
\item $F$ is $k$ times vertically differentiable with $\nabla^j_x F=(\nabla^j_x F_t)_{t\in[0,T]}\in\bC_l^{0,0}([0,T])\cap\bB([0,T])$ for all $j=1,\ldots,k$.
\end{itemize}
\end{definition}

\begin{remark}\label{rem:defBoundednessPreserving}
We remark that in \cite{ConFou13}, our main reference for functional Itô calculus, the slightly different class of boundedness-preserving functionals $\bB([0,T))$ with index set $[0,T)$ is considered instead of the class $\bB([0,T])$ introduced in Definition~\ref{def:classes} above. In contrast to the latter the boundedness assumption for functionals in the former class in not uniform in time. Our definition corresponds to the one in \cite{ConLu15}. Similarly, the class $\bC^{1,k}_b([0,T))$ of  regular and boundedness-preserving functionals considered in \cite{ConFou13} differs from the class $\bC^{1,k}_b([0,T])$ introduced in Definition~\ref{def:regularFunctionals} above. As a consequence, the choice $t=T$ is admissible in the functional Itô formula below.
\end{remark}

Next, we state a functional version of Itô's formula, compare \cite[Theorem~4.1]{ConFou13} or \cite{BCC16, ConFou10, ConFou10a, ConLu15,Dup09, Fou10}.

\begin{theorem}\label{thm:functionalIto}
Let $Y=(Y(t))_{t\in[0,T]}$ be an $\bR^d$-valued continuous semimartingale defined on $(\Omega,\cF,\bP)$ and $F=(F_t)_{t\in[0,T]}$ a non-anticipative functional belonging to the class $\bC^{1,2}_b([0,T])$. Then, for all $t\in[0,T]$,
\begin{align*}
F_t(Y_t) &= F_0(Y_0) + \int_0^t\cD_sF(Y_s)\,\dl s + \int_0^t\nabla_xF_s(Y_s)\; \dl Y(s) +\frac12\int_0^t\tr \big(\nabla_x^2 F_s(Y_s)\;\dl [Y](s)\big).
\end{align*}
\end{theorem}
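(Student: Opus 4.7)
The plan is to follow the approach of Dupire and Cont--Fournié: localize $Y$ to reduce to a bounded semimartingale, approximate $Y$ by a sequence of piecewise-constant càdlàg processes $Y^n$ whose values at partition points coincide with those of $Y$, then decompose a telescoping sum of increments along a refining sequence of partitions into a horizontal (time-shift) part and a vertical (value-jump) part. The horizontal increments yield the $\cD_s F$ integral via the fundamental theorem of calculus, while the vertical increments, treated by a second-order Taylor expansion of a suitable $C^2$ function on $\bR^d$, yield the Itô integral and the bracket term via standard convergence of Riemann sums for continuous semimartingales.

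First I would localize. Setting $\tau_R:=\inf\{s\in[0,t]:|Y(s)|+[Y](s)\geq R\}\wedge t$ and replacing $Y$ by $Y^{\tau_R}$ reduces to the case where both $Y$ and $[Y]$ are uniformly bounded, since $F$, $\cD F$, $\nabla_xF$ and $\nabla_x^2F$ all belong to $\bB([0,T])$ and are therefore uniformly bounded along any trajectory with $\sup_{s\leq t}|Y(s)|\leq R$. This supplies the dominating bounds required for the limit arguments later.

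Next, fix a refining sequence of partitions $\pi_n=\{0=t_0^n<t_1^n<\cdots<t_{k_n}^n=t\}$ with $|\pi_n|\to 0$, and define the piecewise-constant càdlàg approximation $Y^n$ so that $Y^n(t_i^n)=Y(t_i^n)$ and $Y^n$ is constant on each $[t_i^n,t_{i+1}^n)$. Uniform continuity of the continuous paths of $Y$ yields $d_\infty(Y^n_t,Y_t)\to 0$ almost surely, hence $F_t(Y^n_t)\to F_t(Y_t)$ by left-continuity of $F$. Writing the telescoping identity $F_t(Y^n_t)-F_0(Y_0)=\sum_i[F_{t_{i+1}^n}(Y^n_{t_{i+1}^n})-F_{t_i^n}(Y^n_{t_i^n})]$ and inserting an intermediate path obtained from $Y^n_{t_i^n}$ by vertically perturbing its endpoint by $\Delta_i^n:=Y(t_{i+1}^n)-Y(t_i^n)$, each increment splits into a purely horizontal part (time extension at a fixed endpoint value) and a purely vertical part (endpoint shift at the same evaluation time $t_i^n$). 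The horizontal part equals $\int_{t_i^n}^{t_{i+1}^n}\cD_sF(\cdot)\,\dl s$ by definition of the horizontal derivative and the fundamental theorem of calculus, while the vertical part is expanded by a second-order Taylor expansion of the $C^2$-map $\psi_i^n(z):=F_{t_i^n}((Y^n_{t_i^n})^z)$ around $z=0$, whose first and second derivatives at zero are $\nabla_xF_{t_i^n}(Y^n_{t_i^n})$ and $\nabla_x^2F_{t_i^n}(Y^n_{t_i^n})$, respectively.

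Finally, I would sum over $i$ and let $n\to\infty$. Because the vertical-derivative integrands are evaluated at times $t_i^n\leq s$ for all $s\in[t_i^n,t_{i+1}^n)$, left-continuity of $\nabla_xF$ and $\nabla_x^2F$ applies (together with $d_\infty(Y^n_{t_i^n},Y_s)\to 0$) and yields pointwise convergence of the integrands to $\nabla_xF_s(Y_s)$ and $\nabla_x^2F_s(Y_s)$; continuity at fixed times of $\cD F$ handles the horizontal integral. Classical theorems on convergence of Riemann sums for continuous semimartingales then give
\begin{align*}
\sum_i\nabla_xF_{t_i^n}(Y^n_{t_i^n})\,\Delta_i^n&\xrightarrow{n\to\infty}\int_0^t\nabla_xF_s(Y_s)\,\dl Y(s),\\
\sum_i(\Delta_i^n)^T\nabla_x^2F_{t_i^n}(Y^n_{t_i^n})\,\Delta_i^n&\xrightarrow{n\to\infty}\int_0^t\tr\!\bigl(\nabla_x^2F_s(Y_s)\,\dl[Y](s)\bigr),
\end{align*}
while the Taylor remainders $o(|\Delta_i^n|^2)$ sum to zero in probability by the uniform bound on $\nabla_x^2F$ and the standard fact that $\sum_i|\Delta_i^n|^2$ converges in probability to the finite limit $[Y](t)$.

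The main obstacle will be the passage to the limit in the stochastic-integral Riemann sum: one must show that the adapted step process $\sum_i\nabla_xF_{t_i^n}(Y^n_{t_i^n})\one_{[t_i^n,t_{i+1}^n)}$ converges to $\nabla_xF_s(Y_s)$ in a mode strong enough to imply convergence of the corresponding Itô integrals. This is exactly where the combination of left-continuity of $\nabla_xF$ (for $d_\infty$-continuous dependence on the restricted path at times $t_i^n\nearrow s$), the boundedness-preserving property (for a uniform dominating bound on the localized event), and dominated convergence for stochastic integrals (via Itô's isometry) come together; the same combination also controls the higher-order remainder term.
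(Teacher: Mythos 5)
The paper does not prove Theorem~\ref{thm:functionalIto}; it quotes it from the literature (\cite[Theorem~4.1]{ConFou13}, Dupire), with the modification noted in Remark~\ref{rem:defBoundednessPreserving} that boundedness-preservation is required uniformly on $[0,T]$ precisely so that $t=T$ is admissible. Your outline is the standard Cont--Fourni\'e/Dupire strategy (localization, piecewise-constant path approximation along a refining partition, telescoping, horizontal/vertical splitting of each increment, FTC for the right horizontal derivative, second-order Taylor in the vertical direction, then a limit argument via left-continuity, boundedness-preservation, and dominated convergence for the stochastic-integral Riemann sums), and this is indeed the route that works, so the plan is sound. You should, however, make explicit that the uniform-in-time boundedness built into $\bC^{1,2}_b([0,T])$ is what lets the argument reach $t=T$.

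There is one concrete mismatch in your sketch that would make a step fail as written. You define $Y^n$ so that $Y^n(t_i^n)=Y(t_i^n)$ and $Y^n$ is constant on $[t_i^n,t_{i+1}^n)$, hence $Y^n\equiv Y(t_i^n)$ on $[t_i^n,t_{i+1}^n)$. You then propose to insert $F_{t_i^n}\bigl((Y^n_{t_i^n})^{\Delta_i^n}\bigr)$ and call the remaining increment \emph{purely horizontal}. But the horizontal extension of $(Y^n_{t_i^n})^{\Delta_i^n}$ to $[0,t_{i+1}^n]$ is constant $\equiv Y(t_{i+1}^n)$ on $[t_i^n,t_{i+1}^n]$, whereas $Y^n_{t_{i+1}^n}$ is $\equiv Y(t_i^n)$ on $[t_i^n,t_{i+1}^n)$ with a jump to $Y(t_{i+1}^n)$ only at $t_{i+1}^n$; the two paths differ on all of $[t_i^n,t_{i+1}^n)$, so that bracket is not a horizontal increment and your FTC step does not apply to it. To repair this, either (i) keep your $Y^n$ but reverse the order of the split: extend horizontally first and perturb vertically at $t_{i+1}^n$, so the vertical derivative in the Riemann sum is $\nabla_xF_{t_{i+1}^n}\bigl((Y^n_{t_i^n})_{t_i^n,h_i^n}\bigr)$, which is still $\cF_{t_i^n}$-measurable and converges to $\nabla_xF_s(Y_s)$ by left-continuity; or (ii) use the Cont--Fourni\'e approximation in which $Y^n\equiv Y(t_{i+1}^n)$ on $[t_i^n,t_{i+1}^n)$ with the endpoint reset to $Y(t)$ at the final time, for which your ``vertical at $t_i^n$ then horizontal'' split is exact. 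Either fix restores the argument.
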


The following result concerning functional Kolmogorov equations is taken from \cite[Theorem~3.7]{Fou10}, compare also \cite[Chapter~8]{BCC16}.
\begin{theorem}\label{thm:functionalKolmogorov}
Let $X=(X_t)_{t\geq0}$ be the solution to Eq.~\eqref{eq:SDE} and $F\in \bC^{1,2}_{\operatorname b}([0,T])$. The process $(F_t(X_t))_{t\in[0,T]}$ is a martingale w.r.t.\ $(\cF_t)_ {t\in[0,T]}$ if, and only if, $F$ satisfies the functional partial differential equation
\begin{equation*}
\cD_t F(x_t)=-b(x(t))\nabla_x F_t(x_t)-\frac12\tr\big(\nabla_x^2 F_t(x_t)\,\sigma(x(t))\,\sigma^\top(x(t))\big)
\end{equation*}
for all $t\in(0,T)$ and all $x\in C([0,T],\bR^d)$ belonging to the topological support of $\bP_{X_T}$ in $(C([0,T],\bR^d),\nnrm{\cdot}\infty)$.
\end{theorem}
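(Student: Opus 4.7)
The plan is to apply the functional It\^o formula of Theorem~\ref{thm:functionalIto} to the continuous semimartingale $X$ and the regular functional $F\in\bC^{1,2}_b([0,T])$. Using the decomposition $\dl X(s)=b(X(s))\,\dl s+\sigma(X(s))\,\dl W(s)$ together with the associated quadratic variation $\dl[X](s)=\sigma(X(s))\sigma^\top(X(s))\,\dl s$, the formula rewrites as
\begin{equation*}
F_t(X_t)=F_0(X_0)+\int_0^t \cA_sF(X_s)\,\dl s + M_t,\qquad t\in[0,T],
\end{equation*}
where
\begin{equation*}
\cA_sF(X_s):=\cD_sF(X_s)+b(X(s))\nabla_xF_s(X_s)+\tfrac12\tr\big(\nabla_x^2 F_s(X_s)\,\sigma(X(s))\sigma^\top(X(s))\big)
\end{equation*}
is the drift and $M_t:=\int_0^t\nabla_xF_s(X_s)\,\sigma(X(s))\,\dl W(s)$ is a continuous local martingale. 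Since $\nabla_xF\in\bB([0,T])$, $\sigma$ is of linear growth by Assumption~\ref{ass:bsigma}, and \eqref{eq:gr} provides all polynomial moments of $\sup_{s\leq T}|X(s)|$, the integrand satisfies $\bE\int_0^T|\nabla_xF_s(X_s)\,\sigma(X(s))|^2\,\dl s<\infty$, so $M$ is in fact a true martingale.

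For the \emph{sufficiency} direction, suppose the PDE $\cA_tF(x_t)=0$ holds for every $t\in(0,T)$ and every $x$ in the topological support of $\bP_{X_T}$. Since $X_T$ takes values in this support $\bP$-a.s., we obtain $\cA_sF(X_s)=0$ for $(\dl s\otimes\dl\bP)$-almost every $(s,\omega)$, hence $F_t(X_t)=F_0(X_0)+M_t$, which is a martingale.

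For the \emph{necessity} direction, assume $(F_t(X_t))_{t\in[0,T]}$ is a martingale. By uniqueness of the canonical decomposition of a continuous semimartingale, its finite variation part $t\mapsto\int_0^t\cA_sF(X_s)\,\dl s$ must vanish identically, $\bP$-a.s. A Fubini-type argument then yields a Lebesgue-null set $N\subset(0,T)$ such that for every $t\in(0,T)\setminus N$ we have $\cA_tF(X_t)=0$ $\bP$-a.s. The crux is to promote this to the pointwise statement in the theorem. First, for each fixed $t$ the map $C([0,T],\bR^d)\ni x\mapsto\cA_tF(x_t)$ is continuous in the uniform norm: this uses $F,\cD F,\nabla_xF,\nabla_x^2F\in\bC^{0,0}_l([0,T])$ (which in particular implies continuity at fixed times) together with continuity of $b$ and $\sigma$. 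A continuous function vanishing $\bP_{X_T}$-a.e.\ must vanish on the entire topological support of $\bP_{X_T}$. To eliminate the exceptional time set $N$, one approximates an arbitrary $t_0\in(0,T)$ by $t_k\nearrow t_0$ with $t_k\notin N$; for any path $x$ in the support, continuity of $x$ at $t_0$ yields $d_\infty(x_{t_k},x_{t_0})\to 0$, and left-continuity of $\cA_\cdot F$ transfers the identity to $t_0$.

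The main obstacle is precisely this final continuity-plus-support argument: bridging a Lebesgue-a.e.\ identity in $(s,\omega)$ to an everywhere identity on a topological support requires a careful combination of the left-continuity of the functional derivatives on $(\Lambda,d_\infty)$ with the fact that the sample paths of $X$ densely fill the support of $\bP_{X_T}$. By contrast, the ``if'' direction is a direct consequence of It\^o's formula once the integrability of $M$ has been verified.
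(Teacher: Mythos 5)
The paper does not actually prove this statement: it is quoted directly from the literature, namely \cite[Theorem~3.7]{Fou10} (see also \cite[Chapter~8]{BCC16}). So there is no ``paper's own proof'' to compare against, and your attempt must be judged on its own merits.

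Your overall strategy --- apply the functional It\^o formula (Theorem~\ref{thm:functionalIto}) to $F$ and $X$, read off the semimartingale decomposition of $F_t(X_t)$, verify that the local martingale part is a true martingale, and then characterize the martingale property by vanishing of the drift $\int_0^\cdot\cA_sF(X_s)\,\dl s$ --- is indeed the standard route, and your treatment of the sufficiency direction is correct: if $\cA_tF(x_t)=0$ for every $t\in(0,T)$ and every $x$ in $\supp\bP_{X_T}$, then since $X_T\in\supp\bP_{X_T}$ a.s.\ the drift process vanishes identically, giving the martingale property.

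The necessity direction, however, has a genuine gap at exactly the point you yourself flag as the crux. From uniqueness of the canonical decomposition you obtain $\cA_sF(X_s)=0$ for $\dl s\otimes\dl\bP$-a.e.\ $(s,\omega)$, and by continuity at fixed times of $\cD F$, $\nabla_xF$, $\nabla_x^2F$ (and of $b$, $\sigma$, and the restriction map $x\mapsto x_t$) you can pass to $\cA_tF(x_t)=0$ for every $x\in\supp\bP_{X_T}$, but only for $t$ outside a Lebesgue-null set $N$. To eliminate $N$ you then invoke ``left-continuity of $\cA_\cdot F$'' along $t_k\nearrow t_0$. This is not available from the hypothesis: the class $\bC^{1,2}_b([0,T])$ as defined in Definition~\ref{def:regularFunctionals} requires $\nabla_xF,\nabla_x^2F\in\bC^{0,0}_l([0,T])$ (left-continuity in the $d_\infty$-sense), but for $\cD F$ it requires only continuity at fixed times, i.e.\ continuity of each $\cD_tF\colon D([0,t],\bR^d)\to\bR$ separately. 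Thus along $t_k\nearrow t_0$ you know that $b(x(t_k))\nabla_xF_{t_k}(x_{t_k})+\tfrac12\tr(\nabla_x^2F_{t_k}(x_{t_k})\sigma\sigma^\top(x(t_k)))$ converges to the corresponding quantity at $t_0$, and hence $\cD_{t_k}F(x_{t_k})$ converges to \emph{some} limit; but there is no hypothesis guaranteeing that this limit equals $\cD_{t_0}F(x_{t_0})$. Your argument would be complete if $\cD F$ were assumed left-continuous (or $d_\infty$-continuous), but under the stated assumptions the passage from a.e.\ $t$ to all $t$ requires a different idea --- in the original reference this is handled by a localization/gluing argument around a fixed time $t_0$ and path $x$, not by a one-sided limit in $t$. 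As written, the final step of your necessity argument is asserted rather than proved.
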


\section{Smoothness with respect to the initial condition}
\label{sec:inreg}
%
%

Here we collect and derive several auxiliary results concerning the regularity of the solution to Eq.~\eqref{eq:SDE} with respect to the initial condition. They are crucial for the regularity properties of the functional $F^\epsilon$ and the explicit representation of its derivatives as proved in Section~\ref{sec:RegularityFFepsilon}. 

Recall that $X^{s,\xi}=(X^{s,\xi}(t))_{t\in[s,\infty)}$ denotes the solution to \eqref{eq:SDE} started at time $s\geq0$ from $\xi\in\bR^d$. Given $p\geq1$ and a random variable $Y\in L^p(\Omega,\mathcal F_s,\bP;\bR^d)$, we use the analogue notation $X^{s,Y}=(X^{s,Y}(t))_{t\in[s,\infty)}$ for the solution to \eqref{eq:SDE} started at time time $s\geq0$ with initial condition $X^{s,Y}(s)=Y$. For $s\in[0,T]$ and $Y, Z\in L^p(\Omega,\cF_s,\bP;\bR^d)$, the Burkholder inequality and Gronwall's lemma then yield the standard estimates
\begin{align}
\bE\big(\sup_{t\in[s,T]}|X^{s,Y}(t)|^p\big) &\leq C\big(1+\bE(|Y|^p)\big),\label{eq:standardEstInitialCond1}\\
\bE\big(\sup_{t\in[s,T]}|X^{s,Y}(t)-X^{s,Z}(t)|^p\big) &\leq C\, \bE(|Y-Z|^p),
\label{eq:standardEstInitialCond2}
\end{align}
where $C=C_{p,T,\sigma,b}\in(0,\infty)$ does not depend on $Y$, $Z$ or $s$.
Moreover, under our assumptions on $\sigma$ and $b$ it is well known that, for fixed $s\geq0$, the random field $(X^{s,\xi}(t))_{t\in[s,\infty),\,\xi\in\bR^d}$ has a modification such that, for $\bP$-almost all $\omega\in\Omega$,
the mapping
\[[s,\infty)\times\bR^d\ni(t,\xi)\mapsto X^{s,\xi}(t,\omega)\in\bR^d\]
is continuous and for all $t\in[s,\infty)$ the mapping
\[\bR^d\ni\xi\mapsto X^{s,\xi}(t,\omega)\in\bR^d\]
is infinitely often differentiable, see, e.g.\ \cite[Section V.2]{IkeWat89}. In particular, every continuous modification of $(X^{s,\xi}(t))_{\in[s,\infty),\,\xi\in\bR^d}$ satisfies this property of smoothness w.r.t.\ the initial condition.
The reasoning in the proof of Proposition V.2.2 in \cite{IkeWat89} and the time-homogeneity of Eq.~\eqref{eq:SDE} also yield that for every multi-index $\alpha\in\bN_0^d$, $p\geq1$ and all bounded sets $\cO\subset\bR^d$ the partial derivatives $D^\alpha X^{s,\xi}(t,\omega)=D^\alpha_\xi X^{s,\xi}(t,\omega)$ satisfy the estimate 
\begin{equation}\label{eq:unifLpEstDalphaX}
\sup_{s\in[0,T]}\bE\big(\sup_{\xi\in \cO,\,t\in[s,T]}|D^\alpha X^{s,\xi}(t)|^p\big) <\infty.
\end{equation}

For the proof of our error expansion we need to check that the partial derivatives $D^\alpha_\xi X^{s,\xi}(t,\omega)$, $\alpha\in\bN_0^d$, can be taken uniformly with respect to \ $t\in[s,T]$ and that the $L^p(\Omega;C([s,T],\bR^d))$-norms of these derivatives are bounded in $\xi\in\bR^d$.
As already mentioned in Section~\ref{sec:preliminaries}, we use the notation $D^\alpha$ also for the partial derivatives of general Banach space-valued functions. That is, if $B$ is a Banach space, $g:\bR^d\to B$ a sufficiently often (Fréchet-)differentiable function, $\xi=(\xi_1,\ldots,\xi_d)\in\bR^d$ and $\alpha=(\alpha_1,\ldots,\alpha_d)\in\bN_0^d$, then
\[D^\alpha g(\xi):=D_\xi^\alpha g(\xi):=\frac{\partial^{|\alpha|}}{\partial\xi_1^{\alpha_1},\ldots,\partial\xi_d^{\alpha_d}}g(\xi)\;\in B\]
denotes the corresponding partial derivative of order $|\alpha|=\alpha_1+\ldots+\alpha_d$ of $f$ at $\xi$. In the sequel, we use this notation both in the case $B=\bR^d$ and $g(\xi)=X^{s,\xi}(t,\omega)$ with fixed $t\geq s$ and in the case $B=C([s,T],\bR^d)$ and $g(\xi)=X^{s,\xi}_T(\cdot,\omega)$.

\begin{theorem}\label{thm:regularityInitialCond}
For $s\in[0,T]$ fix a continuous modification of $(X^{s,\xi}(t))_{t\in[s,T],\,\xi\in\bR^d}$.
\begin{itemize}
\item[(i)]
For $\bP$-almost all $\omega\in\Omega$, the mapping
\begin{equation*}
\bR^d\ni\xi\mapsto X^{s,\xi}_T(\cdot,\omega)\in C([s,T],\bR^d)
\end{equation*}
is infinitely often (Fréchet-)differentiable.
In particular, the partial derivatives
\[D^\alpha X^{s,\xi}_T(\cdot,\omega)=D_\xi^\alpha X^{s,\xi}_T(\cdot,\omega),\;\alpha\in\bN_0^d,\;\xi\in\bR^d,\]
exist as $C([s,T],\bR^d)$-limits of the corresponding $C([s,T],\bR^d)$-valued difference quotients.
\item[(ii)]
For all $\alpha\in\bN_0^d\setminus\{0\}$ and $p\in[1,\infty)$ we have
\begin{equation}\label{eq:LpEstDalphaX}
\sup_{s\in[0,T],\,\xi\in\bR^d}\bE\big(\|D^\alpha X^{s,\xi}_T\|_{C([s,T],\bR^d)}^p\big)<\infty.
\end{equation}
\end{itemize}
\end{theorem}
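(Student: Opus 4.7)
The plan is to exploit the variational equations for the partial derivatives $D^{\alpha}X^{s,\xi}$ together with a Kolmogorov-continuity argument to upgrade the known pointwise-in-$t$ smoothness to $C([s,T],\bR^d)$-valued smoothness with uniform moment bounds. Formally differentiating \eqref{eq:SDE} with respect to $\xi$, the first-order derivative $J^{s,\xi}(t):=D X^{s,\xi}(t)$ satisfies the linear SDE
\begin{equation*}
\dl J^{s,\xi}(t)=Db(X^{s,\xi}(t))\,J^{s,\xi}(t)\,\dl t+\sum_{k=1}^m D\sigma_{\bu k}(X^{s,\xi}(t))\,J^{s,\xi}(t)\,\dl W^k(t),\qquad J^{s,\xi}(s)=I,
\end{equation*}
and by repeated differentiation plus Fa\`a di Bruno, $D^{\alpha}X^{s,\xi}$ for $|\alpha|\geq 2$ satisfies an analogous linear SDE whose homogeneous coefficients are again $Db(X^{s,\xi}),D\sigma(X^{s,\xi})$, plus an inhomogeneous forcing that is a polynomial in lower-order derivatives $D^{\beta}X^{s,\xi}$, $|\beta|<|\alpha|$, with coefficients of the form $D^{\gamma}b(X^{s,\xi})$ and $D^{\gamma}\sigma(X^{s,\xi})$, $|\gamma|\leq|\alpha|$.

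I would prove part (ii) by induction on $|\alpha|$. The crucial point is that under Assumption~\ref{ass:bsigma} all derivatives $D^{\gamma}b,D^{\gamma}\sigma$ of order $\geq 1$ are bounded, so the homogeneous multiplicative coefficients are bounded \emph{uniformly in $\xi$}. For $|\alpha|=1$ the Burkholder--Davis--Gundy inequality combined with Gronwall's lemma applied to $\bE\sup_{t\in[s,u]}|J^{s,\xi}(t)|^p$ yields
$$\sup_{s\in[0,T],\,\xi\in\bR^d}\bE\bigl(\|J^{s,\xi}\|_{C([s,T],\bR^{d\times d})}^p\bigr)<\infty.$$
For $|\alpha|\geq 2$, boundedness of the higher-order derivatives of $b,\sigma$ shows that the inhomogeneous forcing is a polynomial of bounded coefficients in the lower-order derivatives, whose sup-norm $L^p$-moments are uniformly bounded by the induction hypothesis; one more application of Burkholder + Gronwall closes the induction and yields \eqref{eq:LpEstDalphaX}.

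For part (i), I would argue that the sup-norm differentiability follows from a quantitative version of the same estimates. Write the second-order difference quotient
$$\Delta^h_{j}(t):=\frac{X^{s,\xi+he_j}(t)-X^{s,\xi}(t)}{h}-D^{e_j}X^{s,\xi}(t)=\int_0^1\bigl(D^{e_j}X^{s,\xi+\tau h e_j}(t)-D^{e_j}X^{s,\xi}(t)\bigr)\,\dl\tau;$$
integrating with respect to $\tau$ the linear SDEs for $D^{e_j}X^{s,\xi+\tau h e_j}-D^{e_j}X^{s,\xi}$ and using the uniform-in-$\xi$ $L^p$-bounds established above together with local Lipschitz continuity of $Db,D\sigma$, another Burkholder + Gronwall argument gives $\bE\|\Delta_j^h\|_{C([s,T],\bR^d)}^p\to 0$ as $h\to 0$. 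Combined with the almost-sure pointwise-in-$t$ smoothness recalled from \cite{IkeWat89} and a standard Kolmogorov-continuity argument applied jointly in $(t,\xi)$ to each $D^{\alpha}X^{s,\xi}(t)$ (the $p$-th moment H\"older estimates in both variables follow from the same linear SDEs), one obtains a modification of $(t,\xi)\mapsto D^{\alpha}X^{s,\xi}(t)$ jointly continuous on $[s,T]\times\bR^d$, so that the difference quotients converge in $C([s,T],\bR^d)$ for $\bP$-a.e.\ $\omega$. Iterating gives smoothness of all orders.

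The main obstacle is the \emph{uniformity in $\xi\in\bR^d$} in \eqref{eq:LpEstDalphaX}: classical references (e.g.\ \cite{IkeWat89}) give only the weaker bound \eqref{eq:unifLpEstDalphaX} over bounded sets. The key observation making uniformity possible is that the homogeneous part of the variational equation depends on $\xi$ only through $Db(X^{s,\xi}),D\sigma(X^{s,\xi})$, which are uniformly bounded by Assumption~\ref{ass:bsigma}; this is precisely what prevents the Gronwall constants from depending on $\xi$. Handling the inhomogeneity at higher orders requires a careful induction to propagate this uniformity through the polynomial nonlinearities in $D^{\beta}X^{s,\xi}$.
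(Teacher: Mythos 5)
Your proposal follows essentially the same route as the paper: for part (ii), both rely on the Faà di Bruno-type variational SDEs for $D^{\alpha}X^{s,\xi}$, observe that the multiplicative coefficients $Db,D\sigma$ are bounded uniformly in $\xi$ under Assumption~\ref{ass:bsigma}, and close an induction on $|\alpha|$ with Burkholder and Gronwall; for part (i), both upgrade the pointwise-in-$t$ smoothness and joint continuity of $(t,\xi)\mapsto D^{\alpha}X^{s,\xi}(t,\omega)$ recalled from \cite{IkeWat89} to $C([s,T],\bR^d)$-valued Fréchet differentiability via a Taylor/mean-value bound on the difference quotients. The only stylistic difference is in part (i): the paper works purely $\omega$-wise, applying a second-order Taylor bound together with the locally uniform estimate \eqref{eq:unifLpEstDalphaX} and the joint continuity already supplied by \cite{IkeWat89}, whereas you interpose an $L^p$-convergence estimate for the difference quotient and a Kolmogorov-continuity argument. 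The $L^p$ step by itself yields only subsequential almost-sure convergence and so cannot give Fréchet differentiability on its own; what actually closes your argument is the joint continuity, which is already available from \cite{IkeWat89} without re-deriving it via Kolmogorov. In short, the extra $L^p$/Kolmogorov layer is harmless but redundant; once you have joint continuity, the integral (or Taylor) representation of the difference quotient directly gives the $\omega$-wise convergence in $C([s,T],\bR^d)$, which is exactly what the paper does.
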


In the proof of Theorem~\ref{thm:regularityInitialCond} and in Corollary \ref{cor:chainRuleDalphaX} we will encounter certain higher order chain rules  of Faà di Bruno type, for which the following notation will be convenient.
\begin{notation}\label{not:FaadiBruno}
For a given multi-index $\alpha\in\bN_0^d\setminus\{0\}$ we denote by
$\Pi(\{1,\ldots,|\alpha|\})\subset\cP(\cP(\{1,\ldots,|\alpha|\}))$ the set of all partitions of the set $\{1,\ldots,|\alpha|\}$. By $|\pi|$ we denote the size of a partition $\pi\in\Pi(\{1,\ldots,|\alpha|\})$, i.e., the number of subsets of $\{1,\ldots,|\alpha|\}$ contained in $\pi$. The disjoint subsets of $\{1,\ldots,|\alpha|\}$ contained in a partition $\pi\in\Pi(\{1,\ldots,|\alpha|\})$ are denoted by $\pi_1,\ldots,\pi_{|\pi|}$, i.e., $\pi=\{\pi_1,\ldots,\pi_{|\pi|}\}$. Finally, we associate to every subset $S\subset\{1,\ldots,|\alpha|\}$ a multi-index $\alpha_S\in\bN_0^d$ by setting $\alpha_S:=|\{k\in S:1\leq k\leq \alpha_1\}|\,e_1+\sum_{j=2}^d |\{k\in S:\sum_{i=1}^{j-1}\alpha_i< k\leq \sum_{i=1}^{j}\alpha_i\}|\,e_j$.
\end{notation}

\begin{proof}[Proof of Theorem~\ref{thm:regularityInitialCond}]

(i) The proof of Proposition V.2.2 in \cite{IkeWat89} implies that, for almost all $\omega\in\Omega$, the mappings $\bR^d\ni\xi\mapsto X^{s,\xi}(t,\omega)\in\bR^d$, $t\geq s$, are infinitely often differentiable, the mappings $[s,\infty)\times\bR^d\ni(t,\xi)\mapsto D^\alpha X^{s,\xi}(t,\omega)\in\bR^d$, $\alpha\in\bN_0^d$, are continuous and, due to \eqref{eq:unifLpEstDalphaX},
\begin{equation}\label{eq:Xxidiffble1}
\sup_{\,\xi\in\cO,\,t\in[s,T]}|D^\alpha X^{s,\xi}(t,\omega)|<\infty
\end{equation}
for all bounded domains $\cO\subset\bR^d$ and $\alpha\in\bN_0^d$. Fix such an $\omega\in\Omega$ and let $(e_i)_{i=1,\ldots,d}$ be the canonical orthonormal basis of $\bR^d$. By Taylor's formula, for $h>0$,
\begin{equation}\label{eq:Xxidiffble2}
\begin{aligned}
\sup_{t\in[s,T]}\Bigg|\frac{D^\alpha X^{s,\xi+h e_i}(t,\omega)-D^\alpha X^{s,\xi}(t,\omega)}h& - D^{\alpha+e_i} X^{s,\xi}(t,\omega)\Bigg|\\
&\leq \frac h2\sup_{\substack{t\in[s,T]\\ \xi'\in[\xi,\xi+h e_i]}}\big| D^{\alpha+2e_i} X^{s,\xi'}(t,\omega)\big|.
\end{aligned}
\end{equation}
Combining \eqref{eq:Xxidiffble1} with $\alpha=2 e_i$ and \eqref{eq:Xxidiffble2} with $\alpha=0$ and using the continuity of the mappings $[s,T]\times\bR^d\ni(t,\xi)\to D^{e_i} X^{s,\xi}(t,\omega)\in \bR^d$, $i\in\{1,\ldots,d\}$,
one obtains the (Fréchet-)differentiability of $\bR^d\ni\xi\mapsto X^{s,\xi}_T(\cdot,\omega)\in C([s,T],\bR^d)$ and the identity
\begin{equation}\label{eq:Xxidiffble3}
D^{\alpha} X^{s,\xi}(t,\omega)=(D^{\alpha} X^{s,\xi}_T(\cdot,\omega))(t)
\end{equation}
for all $\xi\in\bR^d$, $t\in[s,T]$ and $\alpha\in\bN_0^d$ with $|\alpha|=1$. In \eqref{eq:Xxidiffble3}, we have a derivative of the function $\bR^d\ni\xi\mapsto X^{s,\xi}(t,\omega)\in\bR^d$ on the left hand side and a derivative of the function $\bR^d\ni\xi\mapsto X^{s,\xi}_T(\cdot,\omega)\in C([s,T],\bR^d)$ on the right hand side. By repeating this argument for the higher derivatives we finish the proof of (i) via induction over $|\alpha|$.

(ii)
For a better readability, we fix $s=0$ for a moment and omit the explicit notation of the initial condition by writing $X(t)$ instead of $X^{0,\xi}(t)$. The proofs of Propositions V.2.1 and V.2.2 in \cite{IkeWat89} imply that, for $\alpha\in\bN_0^d$ with $|\alpha|=1$, the $\bR^d$-valued process $(D^\alpha X(t))_{t\geq0}$ is the solution to the SDE
\begin{equation*}
D^\alpha X(t)=\alpha +\sum_{\nu=1}^m\int_0^t D\sigma_\nu(X(s))\,D^\alpha X(s)\,\dl W_\nu(s) +\int_0^t Db(X(s))\,D^\alpha X(s)\,\dl s.
\end{equation*}
Here we denote for $x\in\bR^d$ by $\sigma_\nu(x)\in\bR^d$ be the $\nu$-th column vector of $\sigma(x)\in\bR^{d\times m}$, $D\sigma_\nu:\bR^d\to\bR^{d\times d}$ and $Db:\bR^d\to\bR^{d\times d}$ are the (total) derivatives of $\sigma_\nu:\bR^d\to\bR^d$ and $b:\bR^d\to\bR^d$, and $W_\nu$ is the $\nu$-th component of $W$. Using the Burkholder inequality we obtain for all $p\geq2$ and $t\in[0,T]$
\begin{equation}\label{eq:Xxidiffble4}
\begin{aligned}
\bE\big(\sup_{r\in[0,t]}|D^\alpha X(r)|^p\big)
&\leq C_{p,T}\Big(1+\bE\int_0^t\big(\sum_{\nu=1}^m|D\sigma_\nu(X(s))\,D^\alpha X(s)|^2\big)^{\frac p2}\,\dl s \\
&\quad +\bE\int_0^t|Db(X(s))\,D^\alpha X(s)|^p\dl s\Big)\\
&\leq C_{p,T,\sigma,b}\Big(1+\int_0^t\bE\big(\sup_{r\in[0,s]}|D^\alpha X(r)|^p\big)\,\dl s\Big),
\end{aligned}
\end{equation}
where the constant $C_{p,T,\sigma,b}\in(0,\infty)$ does not depend on the initial condition $\xi\in\bR^d$. Thus, Gronwall's lemma implies
\begin{equation}\label{eq:Xxidiffble5}
\bE\big(\|D^\alpha X_T\|_{C([0,T],\bR^d)}^p\big)=\bE\big(\sup_{r\in[0,T]}|D^\alpha X(r)|^p\big)
\leq C_{p,T,\sigma,b}\,\exp(C_{p,T,\sigma,b}\,T)
\end{equation}
with the constant $C_{p,T,\sigma,b}$ from \eqref{eq:Xxidiffble4}. Taking into account the time-homogeneity of Eq.~\eqref{eq:SDE} this proves the assertion for $|\alpha|=1$.


For general $\alpha\in\bN_0^d\setminus\{0\}$, the proofs of Propositions V.2.1 and V.2.2 in \cite{IkeWat89} imply that the $\bR^d$-valued process $(D^\alpha X(t))_{t\geq0}$ is the solution to the SDE
\begin{equation}\label{eq:SDE_DalphaX}
\begin{aligned}
D^\alpha X(t)
&=\sum_{\pi\in\Pi(\{1,\ldots,|\alpha|\})}\Bigg\{\sum_{\nu=1}^m\int_0^t D^{|\pi|}\sigma_\nu(X(s))\big(D^{\alpha_{\pi_1}} X(s),\ldots,D^{\alpha_{\pi_{|\pi|}}} X(s)\big)\,\dl W_\nu(s)\\
&\quad+\int_0^t D^{|\pi|}b(X(s))\big(D^{\alpha_{\pi_1}} X(s),\ldots,D^{\alpha_{\pi_{|\pi|}}} X(s)\big)\,\dl s\Bigg\},
\end{aligned}
\end{equation}
where we use Notation~\ref{not:FaadiBruno} and where, for $n=1,\ldots,|\alpha|$, $D^n\sigma_\nu$ and $D^nb$ are the $n$-th total derivatives of $\sigma_\nu:\bR^d\to\bR^d$ and $b:\bR^d\to\bR^d$, considered as functions with values the space of $n$-fold multilinear mappings from $(\bR^d)^n$ to $\bR^d$.
Using \eqref{eq:SDE_DalphaX}, the proof is finished via induction over $|\alpha|$ by arguing similarly as in \eqref{eq:Xxidiffble4} and \eqref{eq:Xxidiffble5} and applying the respective estimates for $\bE\big(\|D^\beta X_T\|_{C([0,T],\bR^d)}^q\big)$, $q\geq 2$, $\beta\in\bN_0^d$ with $|\beta|<|\alpha|$. Passing from $s=0$ to general $s\in[0,T]$ is no problem due to the time-homogeneity of Eq.~\eqref{eq:SDE}.
\end{proof}

In the sequel, we always consider continuous modifications of the random fields $(X^{s,\xi}(t))_{t\in[s,T],\xi\in\bR^d}$, $s\in[0,T]$.

\begin{remark}\label{rem:FrechetDerXsxi}
For $n\in\bN$ and $\omega\in\Omega$ as in Theorem~\ref{thm:regularityInitialCond}~(i), we consider the $n$-th Fréchet derivative of the mapping $\bR^d\ni\xi\mapsto X^{s,\xi}_T(\cdot,\omega)\in C([s,T],\bR^d)$ in $\xi_0\in\bR^d$ as usual as an $n$-fold multilinear mapping from $(\bR^d)^n$ to $C([s,T],\bR^d)$,
\[D^n X^{s,\xi_0}_T(\cdot,\omega)\colon(\bR^d)^n\to C([s,T],\bR^d).\]
Just as in standard calculus one sees that it is given by
\begin{align*}
D^n X^{s,\xi_0}_T(\cdot,\omega)(\eta_1,\ldots,\eta_n)&=\sum_{\substack{\alpha\in\bN_0^d\\|\alpha|= n}}\eta_{1,1}\ldots\eta_{\alpha_1,1}\,\eta_{\alpha_1+1,2}\ldots\eta_{\alpha_1+\alpha_2,2} \ldots\\
&\quad \ldots\eta_{\alpha_1+\ldots+\alpha_{d-1}+1,d}\ldots\eta_{n,d}\, D^\alpha X^{s,\xi_0}_T(\cdot,\omega),
\end{align*}
where $\eta_j=(\eta_{j,1},\ldots,\eta_{j,d})\in\bR^d$, $j=1,\ldots,n$.
\end{remark}


\begin{notation}
Given a $\bR^d$-valued random variable $Y$ we set
\begin{equation}\label{eq:notationDalphasY}
D^\alpha X^{s,Y}_T(\cdot,\omega):= D^\alpha X^{s,Y(\omega)}_T(\cdot,\omega)
=(D^\alpha_\xi X^{s,\xi}_T(\cdot,\omega))|_{\xi=Y(\omega)}\;\in C([s,T],\bR^d)
\end{equation}
for $s\in[0,T]$, $\alpha\in\bN^d_0\setminus\{0\}$ and (almost all) $\omega\in\Omega$. We consider  $D^\alpha X_T^{s,Y}$ optionally as a $\bR^d$-valued process $D^\alpha X_T^{s,Y}=(D^\alpha X_T^{s,Y}(t))_{t\in[s,T]}$ or as a $C([s,T],\bR^d)$-valued random variable,
\[D^\alpha X^{s,Y}_T\colon \Omega\to C([s,T],\bR^d),\;\omega\mapsto D^\alpha X^{s,Y}_T(\omega):=D^\alpha X^{s,Y}_T(\cdot,\omega).\]
 We use the analogue notation for the $n$-th Fréchet derivatives of $\xi\mapsto X^{s,\xi}_T(\cdot,\omega)$ evaluated at $\xi=Y(\omega)$,
\begin{equation}\label{eq:notationDnY}
D^n X^{s,Y}_T(\cdot,\omega):= D^n X^{s,Y(\omega)}_T(\cdot,\omega)
=(D^n_\xi X^{s,\xi}_T(\cdot,\omega))|_{\xi=Y(\omega)}\;\in \bo^{(n)}(\bR^d,C([s,T],\bR^d)),
\end{equation}
where $\bo^{(n)}(\bR^d,C([s,T],\bR^d))$ is the space of bounded, $n$-fold multilinear mappings from $(\bR^d)^n$ to $C([s,T],\bR^d)$.
\end{notation}

Note that the notation \eqref{eq:notationDalphasY} is consistent with our notation $X^{s,Y}=(X^{s,Y}(t))_{t\geq s}$ for the solution of \eqref{eq:SDE} started at time $s$ with $\cF_s$-measurable initial condition $Y$, since
$X^{s,Y}_T(\cdot,\omega)=X^{s,Y(\omega)}_T(\cdot,\omega)$ for almost all $\omega\in\Omega$.

\begin{corollary}\label{cor:ContDalphaX}
Let $s\in[0,T]$ and $Y,\,Y_n$, $n\in\bN$,
be $\cF_s$-measurable, $\bR^d$-valued random variables
such that $Y_n\xrightarrow{n\to\infty}Y$ $\bP$-almost surely. Then, for all $\alpha\in\bN_0^d\setminus\{0\}$ and $p\geq1$,
\[D^\alpha X^{s,Y_n}_T\xrightarrow{n\to\infty}D^\alpha X^{s,Y}_T\quad\text{ in }L^p(\Omega;C([s,T],\bR^d)).\]
\end{corollary}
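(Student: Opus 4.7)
The strategy is to combine an almost sure convergence statement with a uniform integrability argument based on Theorem~\ref{thm:regularityInitialCond}(ii), and then invoke Vitali's convergence theorem. First, by Theorem~\ref{thm:regularityInitialCond}(i), for $\bP$-almost all $\omega\in\Omega$ the mapping $\bR^d\ni\xi\mapsto X^{s,\xi}_T(\cdot,\omega)\in C([s,T],\bR^d)$ is infinitely often Fréchet-differentiable; in particular, the $|\alpha|$-th Fréchet derivative is continuous, and hence by Remark~\ref{rem:FrechetDerXsxi} the partial derivative $\xi\mapsto D^\alpha X^{s,\xi}_T(\cdot,\omega)$ is continuous from $\bR^d$ to $C([s,T],\bR^d)$. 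Combining this continuity with the assumed $\bP$-a.s. convergence $Y_n(\omega)\to Y(\omega)$ gives, for $\bP$-almost all $\omega$,
\[
D^\alpha X^{s,Y_n}_T(\cdot,\omega)=D^\alpha X^{s,Y_n(\omega)}_T(\cdot,\omega)\xrightarrow{n\to\infty}D^\alpha X^{s,Y(\omega)}_T(\cdot,\omega)=D^\alpha X^{s,Y}_T(\cdot,\omega)
\]
in $C([s,T],\bR^d)$, i.e.\ $D^\alpha X^{s,Y_n}_T\to D^\alpha X^{s,Y}_T$ $\bP$-almost surely as $C([s,T],\bR^d)$-valued random variables.

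Next I would establish a uniform-in-$n$ bound in $L^q(\Omega;C([s,T],\bR^d))$ for some $q>p$, from which uniform integrability of $\{\|D^\alpha X^{s,Y_n}_T\|_{C([s,T],\bR^d)}^p\}_{n\in\bN}$ follows. The key observation is that for every fixed deterministic $\xi\in\bR^d$ the random variable $D^\alpha X^{s,\xi}_T$ can be constructed as a measurable functional of the Wiener increments $(W(t)-W(s))_{t\in[s,T]}$ and is therefore independent of $\cF_s$, while the random field $(\xi,\omega)\mapsto D^\alpha X^{s,\xi}_T(\omega)$ is jointly measurable (as a consequence of its almost sure continuity in~$\xi$). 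Since $Y_n$ is $\cF_s$-measurable, a standard freezing/substitution lemma then yields
\[
\bE\big(\|D^\alpha X^{s,Y_n}_T\|_{C([s,T],\bR^d)}^q\big)=\bE\big[\phi_q(Y_n)\big],\qquad \phi_q(\xi):=\bE\big(\|D^\alpha X^{s,\xi}_T\|_{C([s,T],\bR^d)}^q\big),
\]
and by Theorem~\ref{thm:regularityInitialCond}(ii) the function $\phi_q$ is uniformly bounded on $\bR^d$, so $\sup_{n}\bE\|D^\alpha X^{s,Y_n}_T\|_{C([s,T],\bR^d)}^q<\infty$.

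Finally, picking $q>p$ makes the family $\{\|D^\alpha X^{s,Y_n}_T\|_{C([s,T],\bR^d)}^p\}_{n\in\bN}$ bounded in $L^{q/p}(\Omega)$ and hence uniformly integrable; together with the almost sure convergence established in the first step, Vitali's convergence theorem yields the desired convergence in $L^p(\Omega;C([s,T],\bR^d))$.

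The main technical obstacle I expect is the second step: one must ensure that the chosen continuous modifications of the random fields $(X^{s,\xi})_{\xi\in\bR^d}$, and hence of their partial derivatives, can simultaneously be taken jointly measurable in $(\xi,\omega)$ and independent of $\cF_s$, so that the freezing argument is applicable. This is a standard but slightly delicate point (it is implicit in the construction of the continuous modification via Kolmogorov's criterion applied to the Wiener shift after time~$s$); once this is settled, the uniform $L^q$-bound and thus the Vitali argument go through routinely thanks to Theorem~\ref{thm:regularityInitialCond}(ii).
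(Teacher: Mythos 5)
Your argument is correct and rests on exactly the same two pillars as the paper's proof --- the smoothness-in-$\xi$ statement of Theorem~\ref{thm:regularityInitialCond}(i) and the uniform moment bound \eqref{eq:LpEstDalphaX} of Theorem~\ref{thm:regularityInitialCond}(ii), combined with a freezing argument for $\cF_s$-measurable initial data --- but you arrange them differently. The paper conditions on $\cF_s$ at the outset, freezes $(Y,Y_n)$ inside the conditional expectation, and then applies the dominated convergence theorem twice: once to show that the frozen quantity $(\xi,\eta)\mapsto\bE\|D^\alpha X^{s,\xi}_T-D^\alpha X^{s,\eta}_T\|^p$ is continuous and hence tends to zero along $(Y,Y_n)\to(Y,Y)$, and once more (using the uniform bound) to pass to the outer expectation. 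You instead first observe that the pathwise continuity of $\xi\mapsto D^\alpha X^{s,\xi}_T(\cdot,\omega)$ directly gives $\bP$-a.s.\ convergence of $D^\alpha X^{s,Y_n}_T$ to $D^\alpha X^{s,Y}_T$ in $C([s,T],\bR^d)$, and then invoke the freezing step only to obtain a uniform-in-$n$ $L^q$ bound (via $\phi_q$ and \eqref{eq:LpEstDalphaX}), concluding with Vitali's theorem. In effect you have unfolded the paper's double application of dominated convergence into an almost-sure convergence plus uniform integrability argument, which is a legitimate and arguably more transparent reorganisation. The technical caveat you flag --- that the continuous modification of $(\xi,\omega)\mapsto D^\alpha X^{s,\xi}_T(\omega)$ must be jointly measurable and, for fixed $\xi$, independent of $\cF_s$ so the freezing lemma applies --- is the same one that is implicitly used (and silently resolved by standard considerations for time-homogeneous SDEs) in the paper's appeal to ``standard properties of conditional expectations,'' so your proof is at the same level of rigour as the original.
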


\begin{proof}
Using standard properties of conditional expectations, we have
\begin{equation*}
\begin{aligned}
\bE\Big(\big\| D^\alpha X^{s,Y}_T-D^\alpha X^{s,Y_n}_T\big\|_{C([s,T],\bR^d)}^p\Big)
&= \bE\Big(\bE\Big(\big\| D^\alpha X^{s,Y}_T-D^\alpha X^{s,Y_n}_T\big\|_{C([s,T],\bR^d)}^p\,\Big|\,\cF_s\Big) \Big)\\
&= \bE\Big(\bE\Big(\big\| D^\alpha X^{s,\xi}_T-D^\alpha X^{s,\eta}_T\big\|_{C([s,T],
\bR^d)}^p\Big)\Big|_{(\xi,\eta)=(Y,Y_n)}\Big).
\end{aligned}
\end{equation*}
Now the assertion follows from the continuity of the mapping $\bR^d\ni\xi \mapsto D^\alpha X^{s,\xi}_T\in C([s,T],\bR^d)$ asserted by Theorem~\ref{thm:regularityInitialCond}(i), the estimates \eqref{eq:unifLpEstDalphaX} and \eqref{eq:LpEstDalphaX}, and two applications of the dominated convergence theorem.
\end{proof}

\begin{corollary}\label{cor:chainRuleDalphaX}
Let $0\leq s\leq t\leq T$, $\xi\in\bR^d$, $\alpha\in\bN^d_0\setminus\{0\}$ and denote by $D^\alpha X^{s,\xi}|_{[t,T]}$ the $C([t,T];\bR^d)$-valued random variable $\omega\mapsto (D^\alpha X^{s,\xi}(\cdot,\omega))|_{[t,T]}$.
\begin{itemize}
\item[(i)]
If $|\alpha|=1$, then
\begin{equation*}
D^\alpha X^{s,\xi}|_{[t,T]}=D X^{t,X^{s,\xi}(t)}_T D^\alpha X^{s,\xi}(t)
\end{equation*}
$\bP$-almost surely in $C([t,T],\bR^d)$.
(Note that
the random variable $D X^{t,X^{s,\xi}(t)}_T$ takes values in $\bo(\bR^d,C([t,T],\bR^d))$ and $D^\alpha X^{s,\xi}(t)$ takes values in $\bR^d$.)
\item[(ii)]
For general $\alpha\in\bN^d_0\setminus\{0\}$ we have
\begin{equation*}
D^\alpha X^{s,\xi}|_{[t,T]}
=\sum_{\pi\in\Pi(\{1,\ldots,|\alpha|\})}D^{|\pi|} X^{t,X^{s,\xi}(t)}_T \big(D^{\alpha_{\pi_1}} X^{s,\xi}(t),\ldots,D^{\alpha_{\pi_{|\pi|}}} X^{s,\xi}(t)\big)
\end{equation*}
$\bP$-almost surely in $C([t,T],\bR^d)$, where we use Notation~\ref{not:FaadiBruno}.
(Note that
the random variable $D^{|\pi|} X^{t,X^{s,\xi}(t)}_T$ takes values in $\bo^{(|\pi|)}(\bR^d,C([t,T],\bR^d))$ and the random variables $D^{\alpha_{\pi_1}} X^{s,\xi}(t),\ldots,D^{\alpha_{\pi_{|\pi|}}} X^{s,\xi}(t)$ take values in $\bR^d$.)
\end{itemize}
\end{corollary}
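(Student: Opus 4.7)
The plan is to derive both identities from the flow/cocycle property of the SDE~\eqref{eq:SDE}: for $0\leq s\leq t\leq T$ and $\xi\in\bR^d$, strong uniqueness yields
\[
X^{s,\xi}(r,\omega)=X^{t,X^{s,\xi}(t,\omega)}(r,\omega)\qquad\text{for all }r\in[t,T]
\]
on a set of full probability. Combined with Theorem~\ref{thm:regularityInitialCond}(i), this identity expresses the $C([t,T],\bR^d)$-valued map $\xi\mapsto X^{s,\xi}_T(\omega)|_{[t,T]}$ as the composition of the inner map $\xi\mapsto X^{s,\xi}(t,\omega)\in\bR^d$ with the outer map $\eta\mapsto X^{t,\eta}_T(\omega)\in C([t,T],\bR^d)$. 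Once this composition structure is available on a common set of full measure (and not merely on a $\xi$-dependent one), parts (i) and (ii) follow from the Fréchet chain rule and the Faà di Bruno formula in Banach spaces.

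First I would upgrade the fixed-$\xi$ flow identity to one valid simultaneously in $\xi$ on a common $\omega$-null set. Using the continuous-in-$(\xi,r)$ modifications of $X^{s,\xi}(r)$ and $X^{t,\eta}(r)$ provided by the proof of Proposition~V.2.2 in \cite{IkeWat89}, both sides of the flow identity are continuous in $\xi$ for almost every $\omega$. Verifying the identity for all rational $\xi$ (where it holds almost surely) and extending by continuity yields a set $\Omega_0\subset\Omega$ of full measure on which
\[
X^{s,\xi}_T(\omega)|_{[t,T]}=X^{t,X^{s,\xi}(t,\omega)}_T(\omega)\qquad\text{in }C([t,T],\bR^d)
\]
holds for every $\xi\in\bR^d$ simultaneously.

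For part (i) I would then differentiate this identity in $\xi$ on $\Omega_0$ via the Fréchet chain rule in Banach spaces, with inner derivative $D^\alpha X^{s,\xi}(t)\in\bR^d$ and outer Fréchet derivative $DX^{t,X^{s,\xi}(t)}_T\in\bo(\bR^d,C([t,T],\bR^d))$, both of which exist by Theorem~\ref{thm:regularityInitialCond}(i). For part (ii), the Faà di Bruno formula for the $|\alpha|$-th partial derivative of a Banach-space-valued composition produces a sum indexed by partitions $\pi$ of $\{1,\ldots,|\alpha|\}$, where each block $\pi_k$ contributes a factor $D^{\alpha_{\pi_k}}X^{s,\xi}(t)\in\bR^d$ (with $\alpha_{\pi_k}$ as in Notation~\ref{not:FaadiBruno}) and the outer factor is $D^{|\pi|}X^{t,X^{s,\xi}(t)}_T\in\bo^{(|\pi|)}(\bR^d,C([t,T],\bR^d))$, matching the claimed expression exactly. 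The only genuine obstacle is the null-set bookkeeping in the first step, namely securing a single $\omega$-set on which the flow identity is a pointwise equality of smooth $\xi$-dependent maps into $C([t,T],\bR^d)$; once that is in hand, the rest is the routine combinatorics of partitions and multilinear algebra.
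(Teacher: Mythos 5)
Your proof is correct, and it takes a genuinely (if mildly) different route from the one in the paper. The paper's proof of part~(i) avoids the ``simultaneous-in-$\xi$'' upgrade of the flow identity entirely: it fixes a sequence $h_n\searrow 0$, invokes the flow identity $X^{s,\eta}|_{[t,T]}=X^{t,X^{s,\eta}(t)}_T$ only at the countably many points $\eta\in\{\xi\}\cup\{\xi+h_ne_i:n\in\bN\}$ (each valid a.s., so the intersection is still full measure), writes $D^{e_i}X^{s,\xi}|_{[t,T]}$ as the $C([t,T],\bR^d)$-limit of the difference quotients $(X^{t,X^{s,\xi+h_ne_i}(t)}_T-X^{t,X^{s,\xi}(t)}_T)/h_n$, and then identifies the limit via the chain rule and Theorem~\ref{thm:regularityInitialCond}(i). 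Part~(ii) is then handled by induction over $|\alpha|$ with the same difference-quotient mechanism. Your approach instead first upgrades the pointwise flow identity to a $\xi$-uniform one on a common full-measure set $\Omega_0$ by rational density plus continuity of the modifications, which turns the whole thing into a deterministic statement about a fixed composition of smooth maps $\bR^d\to\bR^d\to C([t,T],\bR^d)$; part~(ii) then drops out of a single invocation of the Banach-space Faà di Bruno formula rather than an induction. The trade-off is that the paper's argument is shorter because the countable-sequence device makes the null-set bookkeeping automatic, while yours is conceptually cleaner once the upgrade step is done, since it eliminates both the limit-identification step and the induction. One small point worth making explicit in your write-up: before differentiating the flow identity you should observe (as the paper implicitly does via Theorem~\ref{thm:regularityInitialCond}(i)) that the restriction map $C([s,T],\bR^d)\to C([t,T],\bR^d)$ is bounded linear, so it commutes with Fréchet differentiation and the left-hand side $D^\alpha X^{s,\xi}|_{[t,T]}$ really is the $\alpha$-th derivative of the restricted $C([t,T],\bR^d)$-valued map.
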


\begin{proof}
\begin{enumerate}[(i)]
\item
If $|\alpha|=1$ we have $\alpha=e_i$ for some $i\in\{1,\ldots,d\}$. By Theorem~\ref{thm:regularityInitialCond}(i) we know that for almost all $\omega\in\Omega$ the derivative $D^{e_i}X^{s,\xi}(\cdot,\omega)_T=D^{e_i}_\xi X^{s,\xi}(\cdot,\omega)_T$ exists as a $C([s,T],\bR^d)$-limit of the corresponding difference quotient. Let $(h_n)_{n\in\bN}$ be a sequence of positive numbers decreasing to zero. Then, $\bP$-almost surely,
\begin{align*}
D^{e_i}X^{s,\xi}|_{[t,T]}=C([t,T],\bR^d)\text{-}\lim_{n\to\infty}\frac{X^{s,\xi+h_ne_i}|_{[t,T]}-X^{s,\xi}|_{[t,T]}}{h_n}.
\end{align*}
As a consequence of the unique solvability of Eq.~\eqref{eq:SDE}, we  have the identities
\[X^{s,\xi+h_ne_i}|_{[t,T]}=X^{t,X^{s,\xi+h_ne_i}(t)}_T \quad\text{ and }\quad X^{s,\xi}|_{[t,T]}=X^{t,X^{s,\xi}(t)}_T\]
holding  $\bP$-almost surely in $C([t,T],\bR^d)$. Further, recall that $X^{t,X^{s,\xi+h_ne_i}(t)}_T(\cdot,\omega) =X^{t,X^{s,\xi+h_ne_i}(t,\omega)}_T(\cdot,\omega)$ and $X^{t,X^{s,\xi}(t)}_T(\cdot,\omega) =X^{t,X^{s,\xi}(t,\omega)}_T(\cdot,\omega)$ for $\bP$-almost all $\omega\in\Omega$. Thus, $\bP$-almost surely
\begin{align*}
D^{e_i}X^{s,\xi}|_{[t,T]}&=C([t,T],\bR^d)\text{-}\lim_{n\to\infty}\frac{X^{s,\xi+h_ne_i}|_{[t,T]}-X^{s,\xi}|_{[t,T]}}{h_n}\\
&=C([t,T],\bR^d)\text{-}\lim_{n\to\infty}\frac{X^{t,X^{s,\xi+h_ne_i}(t)}_T-X^{t,X^{s,\xi}(t)}_T}{h_n}
=D X^{t,X^{s,\xi}(t)}_T D^{e_i} X^{s,\xi}(t),
\end{align*}
by the chain rule and using Theorem~\ref{thm:regularityInitialCond}(i).
\item
The general assertion follows by induction over $|\alpha|$, using similar arguments as in the proof of part (i). \qedhere
\end{enumerate}
\end{proof}

\section{Regularity of the functional $F^\epsilon$}
\label{sec:RegularityFFepsilon}

Recall the definition \eqref{eq:defFteps} of the mappings $F_t^\epsilon$ from $D([0,t],\bR^d)$ to $\bR$, $t\in[0,T]$, in Section~\ref{sec:Introduction}, i.e.,
\begin{equation*}
F_t^\epsilon(x):=\bE f^\epsilon(X^{t,x}_T),\qquad x\in D([0,t],\bR^d),
\end{equation*}
where $\epsilon>0$ and $f^\epsilon=f\circ M^\epsilon\colon D([0,T],\bR^d)\to\bR$ is the regularized version of \linebreak$f\colon C([0,T],\bR^d)\to\bR$ defined by \eqref{eq:deffepsilon}, \eqref{eq:defMepsilon1}, \eqref{eq:defMepsilon2}.

Our minimal assumption on $f$ is that it is $\cB(C([0,T],\bR^d))/\cB(\bR)$-measurable and has polynomial growth.
Obviously, under this assumption, $F_t^\epsilon=(F_t^\epsilon)_{t\in[0,T]}$ is a non-anticipative functional on $D([0,T],\bR^d)$ in the sense of Definition~\ref{def:nonanticipativeFunctional}.
The goal of this section is to show that, if  $f\in C^2_p(C([0,T],\bR^d),\bR)$, then $F^\varepsilon$ is a regular functional belonging the class $\bC^{1,2}_b([0,T])$ introduced in Definition~\ref{def:regularFunctionals}.
We divide the proof into a series of lemmata. In the proofs we often use the fact that if for some $n\in\bN_0$ the polynomial growth bound
\begin{equation}\label{eq:dnf}
\|D^{n}f(x)\|_{\bo^{(n)}(C([0,T],\bR^d),\bR)}\le C\left(1+\|x\|^q_{C([0,T],\bR^d)}\right),~x\in C([0,T],\bR^d),
\end{equation}
holds, then
$$
\|D^{n}f^{\epsilon}(x)\|_{\bo^{(n)}(D([0,T],\bR^d),\bR)}\le C\left(1+\|x\|^q_{D([0,T],\bR^d)}\right),~x\in D([0,T],\bR^d),
$$
with the same $C$ as in \eqref{eq:dnf} independently of $\epsilon$. This is the consequence of the chain rule
and the equality $\|M^\epsilon\|_{\bo(D([0,T],\bR^d),C([0,T],\bR^d))}=1$.
%

\begin{lemma}\label{lem:regularityFepsilon1}
For $f\in C_p(C([0,T],\bR^d),\bR)$ and $\epsilon>0$, the non-anticipative functional $F^\epsilon=(F^\epsilon_t)_{t\in[0,T]}$ defined by \eqref{eq:defFteps} is left-continuous and boundedness-preserving, i.e., $F^\epsilon\in\bC^{0,0}_l([0,T])\cap\bB([0,T])$.
Moreover,
\[|F^\epsilon_t(x)|\leq C \big(1+\nnrm{x}{D([0,t],\bR^d)}^q\big)\]
for all $t\in[0,T]$ and $x\in D([0,t],\bR^d)$, where $C,q\in(0,\infty)$ do not depend on $t$, $x$ or $\epsilon$.
\end{lemma}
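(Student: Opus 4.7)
The plan splits into two parts: the polynomial growth bound (from which $F^\epsilon\in\bB([0,T])$ is immediate) and the left-continuity in $d_\infty$. For the first part, observe that $M^\epsilon\colon (D([0,T],\bR^d),\|\cdot\|_\infty)\to (C([0,T],\bR^d),\|\cdot\|_\infty)$ has operator norm equal to $1$ (it is a convolution with a probability density applied to the constant extension $\overline{x}$), so the polynomial growth assumption on $f$ transfers verbatim to $f^\epsilon$ with the same constants. Combining the trivial bound
\[
\|X^{t,x}_T\|_{D([0,T],\bR^d)}\leq \|x\|_{D([0,t],\bR^d)}+\sup_{s\in[t,T]}|X^{t,x(t)}(s)|
\]
with the standard moment estimate \eqref{eq:standardEstInitialCond1} applied to $X^{t,x(t)}$ on $[t,T]$ yields the uniform bound $|F^\epsilon_t(x)|\leq C\bigl(1+\|x\|_{D([0,t],\bR^d)}^q\bigr)$ with $C,q$ independent of $t$, $x$ and $\epsilon$.

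For left-continuity I would proceed sequentially. Given $t\in[0,T]$, $x\in D([0,t],\bR^d)$ and a sequence $(t_n,x_n)$ with $t_n\in[0,t]$, $x_n\in D([0,t_n],\bR^d)$ and $d_\infty(x_n,x)\to 0$, the goal is $\bE f^\epsilon(X^{t_n,x_n}_T)\to\bE f^\epsilon(X^{t,x}_T)$. Because $f^\epsilon=f\circ M^\epsilon$ is continuous on $(D([0,T],\bR^d),\|\cdot\|_\infty)$ (the linear map $M^\epsilon$ having uniform-norm $\leq 1$, and $f$ being continuous on $C$), and because the first part together with \eqref{eq:standardEstInitialCond1} delivers uniform $L^p$-bounds on $\|X^{t_n,x_n}_T\|_{D([0,T],\bR^d)}$, the claim will reduce via uniform integrability to the pathwise $L^p$-convergence
\[
\sup_{s\in[0,T]}\bigl|X^{t_n,x_n}_T(s)-X^{t,x}_T(s)\bigr|\to 0 \quad \text{in }L^p(\Omega)
\]
for every $p\geq 1$.

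This last convergence I would obtain by splitting $[0,T]$ into the three pieces $[0,t_n)$, $[t_n,t]$ and $[t,T]$. On $[0,t_n)$ the difference equals $x_n(s)-x(s)$, which is bounded by $d_\infty(x_n,x)$ from the definition of the constant extensions $(x_n)_{t_n,T-t_n}$ and $x_{t,T-t}$. On $[t_n,t]$ I would decompose $|X^{t_n,x_n(t_n)}(s)-x(s)|\leq |X^{t_n,x_n(t_n)}(s)-x_n(t_n)|+|x_n(t_n)-x(s)|$: the first summand has $L^p$-norm of order $|t-t_n|^{1/2}(1+|x_n(t_n)|)$ by the Burkholder inequality and Gronwall's lemma, and the second is again $\leq d_\infty(x_n,x)$ by construction. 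On $[t,T]$ the flow property and pathwise uniqueness for \eqref{eq:SDE} give $X^{t_n,x_n(t_n)}|_{[t,T]}=X^{t,Y_n}_T$ a.s.\ with $Y_n:=X^{t_n,x_n(t_n)}(t)$, and \eqref{eq:standardEstInitialCond2} reduces matters to $\bE|Y_n-x(t)|^p\to 0$, which follows from the $[t_n,t]$-estimate above together with $x_n(t_n)\to x(t)$. The only slightly delicate bookkeeping occurs at the seam $s=t$: if $x$ has a jump at $t$, then $d_\infty(x_n,x)\to 0$ with $t_n<t$ is impossible, because the constant extension $(x_n)_{t_n,T-t_n}\equiv x_n(t_n)$ on $[t_n,T]$ cannot simultaneously be uniformly close to $x$ on $[t_n,t)$ and to $x(t)$ at $s=t$; hence one reduces harmlessly to either $t_n=t$ eventually (where the $[t_n,t]$-piece disappears) or $t_n<t$ with $x$ continuous at $t$ (so $x_n(t_n)\to x(t)$ is automatic). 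I expect this case distinction to be the only subtle point; everything else is a routine combination of SDE moment estimates with the uniform-norm continuity of $M^\epsilon$.
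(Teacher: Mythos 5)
Your proof is correct and follows essentially the same route as the paper: reduce left-continuity to $L^p(\Omega;D([0,T],\bR^d))$-convergence of $X^{t_n,x_n}_T$ to $X^{t,x}_T$, establish this via the definition of $d_\infty$ together with the standard estimates \eqref{eq:standardEstInitialCond1}--\eqref{eq:standardEstInitialCond2}, and obtain the uniform polynomial growth bound from $\|M^\epsilon\|_{\bo(D([0,T],\bR^d),C([0,T],\bR^d))}=1$. The paper packages the reduction step as an application of Lemma~\ref{lem:LpConvImpliesWeakConv} and splits the path error as a two-step triangle inequality (first perturb the initial value at fixed start time $t$, then shift the start time from $t_n$ to $t$, using the flow property and time-homogeneity) rather than by time intervals $[0,t_n)$, $[t_n,t]$, $[t,T]$, and it closes the shifted-start-time term by dominated convergence rather than your quantitative Burkholder bound, but the content is the same. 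One minor remark: the case distinction you flag at the seam $s=t$ is superfluous. The bounds $|x_n(t_n)-x(s)|\le d_\infty(x_n,x)$ for $s\in[t_n,t]$ and $|x_n(t_n)-x(t)|\le d_\infty(x_n,x)$ hold directly from the definition of $d_\infty$, irrespective of whether $x$ is continuous at $t$; your observation that such sequences with $t_n<t$ cannot approach a discontinuous $x$ is correct but not needed for the estimate to go through.
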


\begin{proof}
In order to verify the left-continuity, it suffices to show the following: For every $x=x_t\in D([0,t],\bR^d)\subset\Lambda$ and every sequence $(x^n)_{n\in\bN}\subset\Lambda$ with $x^n=x^n_{t^n}\in D([0,t^n],\bR^d)$, $t^n\in[0,t]$, and
$d_\infty(x^n,x)\xrightarrow{n\to\infty}0$, we have
$F^\epsilon_{t^n}(x^n)\xrightarrow{n\to\infty} F^\epsilon_t(x)$. Applying Lemma~\ref{lem:LpConvImpliesWeakConv} with $B=D([0,T],\bR^d)$, $S=\bR$, $Y=X^{t,x}_T$, $Y_n=X^{t^n,x^n}_T$ and $\varphi=f^\epsilon$, it is enough to prove that
\begin{equation}\label{eq:proofFepsCl1}
X^{t^n,x^n}_T\xrightarrow{n\to\infty} X^{t,x}_T\quad\text{ in }L^p(\Omega;D([0,T],\bR^d))
\end{equation}
for every $p\geq1$. To this end, we start by estimating
\begin{equation}\label{eq:proofFepsCl2}
\begin{aligned}
&\gnnrm{X^{t,x_t}_T-X^{t^n,x^n_{t^n}}_T}{D([0,T],\bR^d)}\\
&\qquad\leq \gnnrm{X^{t,x_t}_T-X^{t,x^n_{t^n,t-t^n}}_T}{D([0,T],\bR^d)} + \gnnrm{X^{t,x^n_{t^n,t-t^n}}_T-X^{t^n,x^n_{t^n}}_T}{D([0,T],\bR^d)}\\
&\qquad=:A+B
\end{aligned}
\end{equation}
and deal with each term separately. Concerning the first term, note that
\begin{equation}\label{eq:proofFepsCl3}
\begin{aligned}
\bE(A^p)&\leq 2^{p-1}\Big(d_\infty(x,x^n)^p+\bE\big(\sup_{s\in[t,T]}\big|X^{t,x(t)}(s)-X^{t,x^n(t^n)}(s)\big|^p\big)\Big)\\
&\leq C \, d_\infty(x,x^n)^p,
\end{aligned}
\end{equation}
where the second estimate follows from \eqref{eq:standardEstInitialCond2} and the definition of the metric $d_\infty$. Since
\[X^{t^n,x^n(t^n)}|_{[t,T]}=X^{t,X^{t^n,x^n(t^n)}(t)}_T\quad \text{ $\bP$-almost surely}\]
as an equality in $C([t,T],\bR^d)$, the $p$-th moment of the second term in \eqref{eq:proofFepsCl2} is bounded by
\begin{equation*}
\begin{aligned}
\bE(B^p)&\leq 2^{p-1}\Big(
\bE\big(\sup_{s\in[t^n,t]}\big|x^n(t^n)-X^{t^n,x^n(t^n)}(s)\big|^p\big)\\
&\quad + \bE\big(\sup_{s\in[t,T]}\big|X^{t,x^n(t^n)}(s)-X^{t,X^{t^n,x^n(t^n)}(t)}(s)\big|^p\big)\Big)\\
&\leq C\,\bE\big(\sup_{s\in[t^n,t]}\big|x^n(t^n)-X^{t^n,x^n(t^n)}(s)\big|^p\big),
\end{aligned}
\end{equation*}
where we used again the estimate \eqref{eq:standardEstInitialCond2} in the second step. Taking into account the time-homogeneity of Eq.~\eqref{eq:SDE} and using the estimate \eqref{eq:standardEstInitialCond2} once more, we obtain
\begin{equation}\label{eq:proofFepsCl4}
\begin{aligned}
\bE(B^p)&\leq C\Big(|x^n(t^n)-x(t)|^p+\bE\big(\sup_{s\in[t^n,t]}\big|x(t)-X^{t^n,x(t)}(s)\big|^p\big)\\
&\quad+\bE\big(\sup_{s\in[t^n,t]}\big|X^{t^n,x(t)}(s)-X^{t^n,x^n(t^n)}(s)\big|^p\big)\Big)\\
&\leq C\Big(|x^n(t^n)-x(t)|^p+\bE\big(\sup_{s\in[0,t-t^n]}\big|x(t)-X^{0,x(t)}(s)\big|^p\big)+|x(t)-x^n(t^n)|^p\Big)\\
&\leq C\Big(d_\infty(x,x^n)^p+ \bE\big(\sup_{s\in[0,t-t^n]}\big|x(t)-X^{0,x(t)}(s)\big|^p\big)\Big).
\end{aligned}
\end{equation}
By dominated convergence, the expectation in the last line goes to zero as $n\to\infty$. The combination of \eqref{eq:proofFepsCl2}, \eqref{eq:proofFepsCl3} and \eqref{eq:proofFepsCl4} yields \eqref{eq:proofFepsCl1} and thus the left-continuity of $F^\epsilon$.

To see that $F^\epsilon$ is boundedness-preserving, we use the polynomial growth of \linebreak $f^\epsilon\colon D([0,T],\bR^d)\to\bR$ and estimate \eqref{eq:standardEstInitialCond1} to conclude that, for all $t\in[0,T]$ and $x\in D([0,t],\bR^d)$,
\begin{align*}
|F^\epsilon_t(x)|=|\bE f^\epsilon(X^{t,x}_T)|&\leq \bE\, C\big(1+\nnrm{X^{t,x}_T}{D([0,T],\bR^d)}^q\big)\\
&\leq C\big(1+\nnrm{x}{D([0,t],\bR^d)}^q+\bE\big(\nnrm{X^{t,x(t)}_T}{D([t,T],\bR^d)}^q\big)\big)\\
&\leq C\big(1+\nnrm{x}{D([0,t],\bR^d)}^q\big)
\end{align*}
where the exponent $q\in(1,\infty)$ and the constant $C\in(0,\infty)$ do not depend on $t$, $x$ or $\epsilon$.
\end{proof}

\begin{lemma}\label{lem:regularityFepsilon2}
If $f\in C^1_p(C([0,T],\bR^d),\bR)$ and $\epsilon>0$, the non-anticipative functional $F^\epsilon=(F^\epsilon_t)_{t\in[0,T]}$ defined by \eqref{eq:defFteps} is vertically differentiable. The vertical derivative $\nabla_x F^\epsilon=(\nabla_x F^\epsilon_t)_{t\in[0,T]}$ is left-continuous and boundedness-preserving, i.e., $\nabla_x F^\epsilon\in\bC^{0,0}_l([0,T])\cap\bB([0,T])$, and is given by
\begin{equation}\label{eq:nablaF}
\nabla_x F^\epsilon_t(x)=\Big(\bE\big[Df^\epsilon(X^{t,x}_T)\,(\one_{[t,T]}D^{e_1}\!X^{t,x(t)}_T)\big],\ldots,\bE\big[Df^\epsilon(X^{t,x}_T)\,(\one_{[t,T]}D^{e_d}\!X^{t,x(t)}_T)\big]\Big)\in\bR^d,
\end{equation}
$t\in [0,T]$, $x\in D([0,t],\bR^d)$. Moreover,
\[|\nabla_x F^\epsilon_t(x)|\leq C \big(1+\nnrm{x}{D([0,t],\bR^d)}^q\big)\]
for all $t\in[0,T]$ and $x\in D([0,t],\bR^d)$, where $C,\,q\in(0,\infty)$ do not depend on $t$, $x$ or $\epsilon$.
\end{lemma}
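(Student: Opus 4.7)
The plan is to derive the explicit formula \eqref{eq:nablaF} for the vertical derivative, then read off the polynomial growth bound (which also yields boundedness-preserving), and finally establish left-continuity, this last step being the main technical difficulty.

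For the derivative computation, fix $t\in[0,T]$, $x\in D([0,t],\bR^d)$ and $j\in\{1,\ldots,d\}$. From the definition of $X^{t,\cdot}$ the paths $X^{t,x_t^{he_j}}_T$ and $X^{t,x}_T$ coincide on $[0,t)$, so
\[
X^{t,x_t^{he_j}}_T - X^{t,x}_T = \one_{[t,T]}\bigl(X^{t,x(t)+he_j}_T - X^{t,x(t)}_T\bigr)\quad\text{in }D([0,T],\bR^d).
\]
Applying the fundamental theorem of calculus to $f^\epsilon$ along this linear segment,
\[
\frac{f^\epsilon(X^{t,x_t^{he_j}}_T)-f^\epsilon(X^{t,x}_T)}{h}=\int_0^1 Df^\epsilon\bigl(X^{t,x}_T + r\,\one_{[t,T]}(X^{t,x(t)+he_j}_T - X^{t,x(t)}_T)\bigr)\Bigl(\tfrac{1}{h}\one_{[t,T]}(X^{t,x(t)+he_j}_T - X^{t,x(t)}_T)\Bigr)\dl r.
\]
By Theorem~\ref{thm:regularityInitialCond}(i) the difference quotient $h^{-1}(X^{t,x(t)+he_j}_T - X^{t,x(t)}_T)$ converges a.s.\ in $C([t,T],\bR^d)$ to $D^{e_j}X^{t,x(t)}_T$, and rewriting it as $\int_0^1 D^{e_j}X^{t,x(t)+\rho he_j}_T\,\dl\rho$ and invoking \eqref{eq:LpEstDalphaX} provides a uniform $L^p(\Omega;C([t,T],\bR^d))$-bound. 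Combined with the continuity and uniform-in-$\epsilon$ polynomial growth of $Df^\epsilon$ (inherited from that of $Df$ through $\|M^\epsilon\|_{\bo(D([0,T],\bR^d),C([0,T],\bR^d))}=1$) and the moment bound \eqref{eq:standardEstInitialCond1}, dominated convergence applied successively in $r$ and in $\omega$ delivers \eqref{eq:nablaF}.

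The polynomial growth estimate for $\nabla_x F^\epsilon_t(x)$ follows directly from \eqref{eq:nablaF} via Hölder's inequality, combining the polynomial growth of $Df^\epsilon$ uniform in $\epsilon$, the moment bound \eqref{eq:standardEstInitialCond1} for $X^{t,x}_T$, and \eqref{eq:LpEstDalphaX} for $D^{e_j}X^{t,x(t)}_T$; boundedness-preserving is an immediate consequence.

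The main obstacle is left-continuity. Given a sequence $(t^n,x^n)\in\Lambda$ with $t^n\in[0,t]$ and $d_\infty(x^n,x)\to 0$, I would follow the strategy of Lemma~\ref{lem:regularityFepsilon1} and apply Lemma~\ref{lem:LpConvImpliesWeakConv} to the bilinear, polynomially growing functional $(y,\eta)\mapsto Df^\epsilon(y)(\eta)$, reducing the task to proving the joint $L^p$ convergence of the pair $\bigl(X^{t^n,x^n}_T,\,\one_{[t^n,T]}D^{e_j}X^{t^n,x^n(t^n)}_T\bigr)$ to $\bigl(X^{t,x}_T,\,\one_{[t,T]}D^{e_j}X^{t,x(t)}_T\bigr)$ in a topology compatible with $Df^\epsilon$. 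Convergence of the first coordinate is what was established in Lemma~\ref{lem:regularityFepsilon1}. For the second coordinate, on the interval $[t,T]$ the chain rule of Corollary~\ref{cor:chainRuleDalphaX}(i) gives
\[
D^{e_j}X^{t^n,x^n(t^n)}_T\big|_{[t,T]}=DX^{t,X^{t^n,x^n(t^n)}(t)}_T\cdot D^{e_j}X^{t^n,x^n(t^n)}(t),
\]
and this converges in $L^p(\Omega;C([t,T],\bR^d))$ by Corollary~\ref{cor:ContDalphaX} applied to the $\cF_t$-measurable initial data $X^{t^n,x^n(t^n)}(t)\to x(t)$ (using \eqref{eq:standardEstInitialCond2} together with $|x^n(t^n)-x(t)|\to 0$) and the moment bounds \eqref{eq:LpEstDalphaX}. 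The subtle point is the residual interval $[t^n,t)$: because $D^{e_j}X^{t^n,x^n(t^n)}(t^n)=e_j$ the prelimit is of order one there whereas the limit vanishes, so strict sup-norm convergence of the second coordinate fails. The resolution exploits the mollification: since $Df^\epsilon(y)(\eta)=Df(M^\epsilon y)(M^\epsilon\eta)$ by the chain rule, and convolution with the smooth kernel $\eta_\epsilon$ maps $L^1([0,T],\bR^d)$-convergent sequences continuously into $C([0,T],\bR^d)$-convergent ones, $L^p(\Omega;L^1([0,T],\bR^d))$-convergence of the second coordinate suffices; and this holds because the discrepancy on $[t^n,t)$ is uniformly bounded in $L^p(\Omega)$ while the Lebesgue measure of $[t^n,t)$ tends to zero. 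This mollification-based softening of the topology, combined with the convergence-lemma argument above, is the heart of the proof.
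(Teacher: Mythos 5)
Your proposal follows the paper's proof in every essential respect: the same differentiation-under-the-expectation argument for \eqref{eq:nablaF} (your fundamental-theorem-of-calculus presentation is equivalent to the paper's appeal to the differentiation lemma for parameter-dependent integrals, with the same $L^p$-dominations coming from \eqref{eq:unifLpEstDalphaX} and \eqref{eq:LpEstDalphaX}), the same Hölder estimate for the polynomial growth bound, and, crucially, the same $L^1$-relaxation via the factorization $Df^\epsilon(y)(\eta)=Df(M^\epsilon y)(M^\epsilon\eta)$ to circumvent the failure of sup-norm convergence of $\one_{[t^n,T]}D^{e_i}X^{t^n,x^n(t^n)}_T$ near $t^n$. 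The one detail you gloss over is that invoking Corollary~\ref{cor:ContDalphaX} requires almost-sure (not merely $L^p$) convergence of the initial data $X^{t^n,x^n(t^n)}(t)$, so one must pass to a subsequence and observe that subsequential convergence of $\nabla_xF^\epsilon_{t^{n_k}}(x^{n_k})$ along every sequence with $d_\infty(x^n,x)\to0$ already yields left-continuity, which is what the paper does explicitly.
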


\begin{proof}
To show the vertical differentiability, we fix $t\in[0,T]$, $x=x_t\in D([0,t],\bR^d)$, $i\in\{1,\ldots,d\}$ and apply the differentiation lemma for parameter-dependent integrals to the mapping
\[(-\delta,\delta)\times\Omega\ni(h,\omega)\mapsto f^\epsilon\big(X^{t,x_t^{he_i}}_T(\omega)\big)\in\bR,\]
where $\delta>0$ and $x_t^{he_i}\in D([0,t],\bR^d)$ is the vertical perturbation of $x_t$ by $he_i\in\bR^d$. The polynomial growth of $Df\colon C([0,T],\bR^d)\to\bo(C([0,T],\bR^d),\bR)$ implies polynomial growth of $Df^\epsilon\colon D([0,T],\bR^d)\to\bo(D([0,T],\bR^d),\bR)$. Together with Theorem~\ref{thm:regularityInitialCond}(i) this implies that
 there exist $C,q\in(0,\infty)$ such that, for all $h\in(-\delta,\delta)$,
\begin{align*}
\big|\frac{\dl}{\dl h}&f^\epsilon\big(X^{t,x_t^{he_i}}_T\big)\big|
=\big|Df^\epsilon\big(X^{t,x_t^{he_i}}_T\big)\,(\one_{[t,T]}D^{e_i}X^{t,x(t)+he_i}_T)\big|\\
&\leq \gnnrm{Df^\epsilon\big(X^{t,x_t^{he_i}}_T\big)}{\bo(D([0,T],\bR^d),\bR)}\gnnrm{\one_{[t,T]}D^{e_i}X^{t,x(t)+he_i}_T}{D([0,T],\bR^d)}\\
&\leq C\big(1+\nnrm{X^{t,x_t^{he_i}}_T}{D([0,T],\bR^d)}\big)^q\,\nnrm{D^{e_i}X^{t,x(t)+he_i}_T}{C([t,T],\bR^d)}\\
&\leq C\big(1+\nnrm{x_t}{D([0,t],\bR^d)}+\sup_{\xi\in B_\delta(x(t))}\nnrm{X^{t,\xi}_T}{C([t,T],\bR^d)}\big)^q\,\sup_{\xi\in B_\delta(x(t))}\nnrm{D^{e_i}X^{t,\xi}_T}{C([t,T],\bR^d)},
\end{align*}
where the last the upper bound belongs to $L^p(\Omega)$ for every $p\in[1,\infty)$ due to \eqref{eq:unifLpEstDalphaX}. Thus, we can apply the differentiation lemma for parameter-dependent integrals and use the chain rule together with Theorem~\ref{thm:regularityInitialCond}(i) to obtain
\begin{align*}
\frac{\dl}{\dl h}\bE\big[f^\epsilon\big(X^{t,x_t^{he_i}}_T\big)\big]\big|_{h=0}
&=\bE\Big[\frac{\dl}{\dl h}f^\epsilon\big(X^{t,x_t^{he_i}}_T\big)\big|_{h=0}\Big]
=\bE\big[Df^\epsilon(X^{t,x_t}_T)\,(\one_{[t,T]}D^{e_i}\!X^{t,x(t)}_T)\big].
\end{align*}

Next, we verify the left-continuity of $\nabla_xF^\epsilon$. To this end, it suffices to prove the following assertion: For every $x=x_t\in D([0,t],\bR^d)\subset\Lambda$ and every sequence $(x^n)_{n\in\bN}\subset\Lambda$ with $x^n=x^n_{t^n}\in D([0,t^n],\bR^d)$, $t^n\in[0,t]$, and
$d_\infty(x^n,x)\xrightarrow{n\to\infty}0$, there exists a subsequence $(x^{n_k})_{k\in\bN}$ such that
$\nabla_x F^\epsilon_{t^{n_k}}(x^{n_k})\xrightarrow{k\to\infty} \nabla_x F^\epsilon_t(x)$. Fix $x=x_t$ and such a sequence $(x^n)_{n\in\bN}\subset\Lambda$. For $i\in\{1,\ldots,d\}$,
\begin{equation}\label{eq:proofNablaFepsCl1}
\begin{aligned}
\big|\bE\big[Df^\epsilon(X^{t,x}_T)\,&(\one_{[t,T]}D^{e_i}\!X^{t,x(t)}_T)\big]-\bE\big[Df^\epsilon(X^{t^n,x^n}_T)\,(\one_{[t^n,T]}D^{e_i}\!X^{t^n,x^n(t^n)}_T)\big]\big|\\
&\leq \big|\bE\big[\big(Df^\epsilon(X^{t,x}_T)-Df^\epsilon(X^{t^n,x^n}_T)\big)\,(\one_{[t,T]}D^{e_i}\!X^{t,x(t)}_T)\big]\big|\\
&\quad + \big|\bE\big[Df^\epsilon(X^{t^n,x^n}_T)\,\big(\one_{[t,T]}D^{e_i}\!X^{t,x(t)}_T-\one_{[t^n,T]}D^{e_i}\!X^{t^n,x^n(t^n)}_T\big)\big]\big|\\
&\quad =: A+B.
\end{aligned}
\end{equation}
By the convergence \eqref{eq:proofFepsCl1}, by Lemma~\ref{lem:LpConvImpliesWeakConv} with $B=D([0,T],\bR^d)$, $S=\bo(D([0,T],\bR^d),\bR)$, $Y=X^{t,x}_T$, $Y_n=X^{t^n,x^n}_T$ and $\varphi=D f^\epsilon$,
and by the estimate \eqref{eq:unifLpEstDalphaX}, the first term in \eqref{eq:proofNablaFepsCl1} satisfies
\begin{equation}\label{eq:proofNablaFepsCl2}
A\leq \big(\bE\gnnrm{Df^\epsilon(X^{t,x}_T)-Df^\epsilon(X^{t^n,x^n}_T)}{\bo(D([0,T],\bR^d),\bR)}^2\big)^{\frac12}\big(\bE\nnrm{D^{e_i}\!X^{t,x(t)}_T}{C([t,T],\bR^d)}^2\big)^{\frac12}\xrightarrow{n\to\infty}0.
\end{equation}
The second term in \eqref{eq:proofNablaFepsCl1} can be estimated by
\begin{equation}\label{eq:proofNablaFepsCl3}
\begin{aligned}
B&\leq
\Big(\bE\gnnrm{Df(M^\epsilon X^{t^n,x^n}_T)M_\epsilon}{\bo(L^1([0,T],\bR^d),\bR)}^2\Big)^{\frac12}\\
&\quad\times
\Big(\bE\gnnrm{\one_{[t,T]}D^{e_i}\!X^{t,x(t)}_T-\one_{[t^n,T]}D^{e_i}\!X^{t^n,x^n(t^n)}_T}{L^1([0,T],\bR^d)}^2\Big)^{\frac12}\\
&\leq \|M_\varepsilon\|_{\bo(C([0,T],\bR^d),L^1([0,T],\bR^d)}\Big(\bE\gnnrm{Df(M^\epsilon X^{t^n,x^n}_T)}{\bo(C([0,T],\bR^d),\bR)}^2\Big)^{\frac12}\\
&\quad\times
\Big(\bE\gnnrm{\one_{[t,T]}D^{e_i}\!X^{t,x(t)}_T-\one_{[t^n,T]}D^{e_i}\!X^{t^n,x^n(t^n)}_T}{L^1([0,T],\bR^d)}^2\Big)^{\frac12}\\
&=:B_{\epsilon}\,B_1\,B_2,
\end{aligned}
\end{equation}
where $B_1$ bounded uniformly in $n\in\bN$ due to the polynomial growth of $$Df\colon C([0,T],\bR^d)\to\bo(C([0,T],\bR^d),\bR),$$ the estimate~\eqref{eq:unifLpEstDalphaX}, and since $|x(t)-x^n(t^n)|\leq d_\infty(x,x^n)\xrightarrow{n\to\infty}0$.
  Note also that $B_{\epsilon}=\|M^\varepsilon\|_{\bo(C([0,T],\bR^d),L^1([0,T],\bR^d)}=\sup_{s\in\bR}|\eta_\epsilon(s)|=(\epsilon/2)^{-1}$. We further have
\begin{equation}\label{eq:proofNablaFepsCl4}
\begin{aligned}
B_2&
\leq \Big(\bE\gnnrm{\one_{[t^n,t]}D^{e_i}\!X^{t^n,x^n(t^n)}_T}{L^1([0,T];\bR^d)}^2\Big)^{\frac12}\\
&\quad+C_T\Big(\bE\gnnrm{D^{e_i}\!X^{t,x(t)}_T-(D^{e_i}\!X^{t^n,x^n(t^n)})|_{[t,T]}}{C([t,T],\bR^d)}^2\Big)^{\frac12}\\
&=:B_{21}+C_TB_{22}.
\end{aligned}
\end{equation}
Using the time-homogeneity of Eq.~\eqref{eq:SDE} and the estimate~\eqref{eq:unifLpEstDalphaX}, one sees that the term $B_{21}$ in \eqref{eq:proofNablaFepsCl4} tends to zero as $n\to\infty$ since
\begin{equation}\label{eq:proofNablaFepsCl5}
\begin{aligned}
\gnnrm{\one_{[t^n,t]}D^{e_i}\!X^{t^n,x^n(t^n)}_T}{L^1([0,T];\bR^d)}&\sim\gnnrm{D^{e_i}\!X^{0,x^n(t^n)}_{t-t^n}}{L^1([0,t-t^n];\bR^d)}\\
&\leq (t-t^n)\sup_{\xi\in B_1(x(t))}\nnrm{D^{e_i}\!X^{0,\xi}_T}{C([0,T],\bR^d)}.
\end{aligned}
\end{equation}
for $n$ large enough. Concerning the term $B_{22}$ in \eqref{eq:proofNablaFepsCl4} note that
\begin{equation}\label{eq:proofNablaFepsCl6a}
\begin{aligned}
\gnnrm{D^{e_i}\!X^{t,x(t)}_T-(D^{e_i}\!X^{t^n,x^n(t^n)})|_{[t,T]}}{C([t,T],\bR^d)}
&\leq \gnnrm{D^{e_i}\!X^{t,x(t)}_T-D^{e_i}\!X^{t,x^n(t^n)}_T}{C([t,T],\bR^d)}\\
&\quad +\gnnrm{D^{e_i}\!X^{t,x^n(t^n)}_T-(D^{e_i}\!X^{t^n,x^n(t^n)})|_{[t,T]}}{C([t,T],\bR^d)},
\end{aligned}
\end{equation}
where the $L^2(\bP)$-norm of the first term on the right hand side goes to zero as $n\to\infty$ due to Corollary~\ref{cor:ContDalphaX}. For the second term on the right hand side of \eqref{eq:proofNablaFepsCl6a} we use Remark~\ref{rem:FrechetDerXsxi} and Corollary~\ref{cor:chainRuleDalphaX} to obtain
\begin{equation}\label{eq:proofNablaFepsCl6}
\begin{aligned}
\big\|D^{e_i}\!X^{t,x^n(t^n)}_T&-(D^{e_i}\!X^{t^n,x^n(t^n)})|_{[t,T]}\big\|_{C([t,T],\bR^d)}\\
&=\gnnrm{D\!X^{t,x^n(t^n)}_T\,e_i-D X^{t,X^{t^n,x^n(t^n)}(t)}_T\,D^{e_i}\!X^{t^n,x^n(t^n)}(t)}{C([t,T],\bR^d)}\\
&\leq\gnnrm{\big(D\!X^{t,x^n(t^n)}_T-D X^{t,X^{t^n,x^n(t^n)}(t)}_T\big)\,e_i}{C([t,T],\bR^d)}\\
&\quad+\gnnrm{D X^{t,X^{t^n,x^n(t^n)}(t)}_T\,\big(e_i-D^{e_i}\!X^{t^n,x^n(t^n)}(t)\big)}{C([t,T],\bR^d)}.
\end{aligned}
\end{equation}
Applying Corollary~\ref{cor:ContDalphaX}, arguing as in \eqref{eq:proofFepsCl4}, and using the fact that $L^p(\bP)$-convergence implies almost-sure convergence for a subsequence, one sees that
\begin{equation*}
\gnnrm{\big(D\!X^{t,x^{n_k}(t^{n_k})}_T-D X^{t,X^{t^{n_k},x^{n_k}(t^{n_k})}(t)}_T\big)\,e_i}{C([t,T],\bR^d)}\xrightarrow{k\to\infty}0
\end{equation*}
in $L^2(\bP)$ for an increasing sequence $(n_k)_{k\in\bN}\subset\bN$. Finally, the second term on the right hand side of \eqref{eq:proofNablaFepsCl6} tends to zero as $n\to\infty$ by Theorem~\ref{thm:regularityInitialCond}(ii) and a dominated convergence argument.
Thus, in summary, the estimates \eqref{eq:proofNablaFepsCl1}---\eqref{eq:proofNablaFepsCl6} yield the left-continuity of $\nabla_x F^\epsilon$.

To see that $\nabla_x F^\epsilon$ is boundedness-preserving, we use the polynomial growth of \linebreak$Df^\epsilon\colon D([0,T],\bR^d)\to\bo(D([0,T],\bR^d),\bR)$, Theorem~\ref{thm:regularityInitialCond}(ii) and the estimate \eqref{eq:standardEstInitialCond1} to conclude that  for all $t\in[0,T]$ and $x\in D([0,t],\bR^d)$,
\begin{equation}\label{eq:dfpg}
\begin{aligned}
\big|\bE\big[&Df^\epsilon(X^{t,x_t}_T)\,(\one_{[t,T]}D^{e_i}\!X^{t,x(t)}_T)\big]\big|\\
&\leq \big(\bE\gnnrm{Df^\epsilon\big(X^{t,x_t}_T\big)}{\bo(D([0,T],\bR^d),\bR)}^2\big)^{\frac12}\big(\bE\gnnrm{\one_{[t,T]}D^{e_i}X^{t,x(t)}_T}{D([0,T],\bR^d)}^2\big)^{\frac12}\\
&\leq C\,\big(\bE(1+\nnrm{X^{t,x_t}_T}{D([0,T],\bR^d)}^p)^2\big)^{\frac12}\sup_{\xi\in\bR^d}\big(\bE\nnrm{D^{e_i}X^{t,\xi}_T}{C([t,T],\bR^d)}^2\big)^{\frac12}\\
&\leq C\,\big(\bE (1+\nnrm{X^{t,x_t}_T}{D([0,T],\bR^d)}^{2p})\big)^{\frac12}\\
&\leq C\,\big(1+\nnrm{x_t}{D([0,t],\bR^d)}^q\big)
\end{aligned}
\end{equation}
where the exponents $p,\,q\in[1,\infty)$ and the constants $C\in(0,\infty)$ are suitably chosen and do not depend on $t$, $x$ or $\epsilon$.
\end{proof}

\begin{lemma}\label{lem:regularityFepsilon3}
If $f\in C^2_p(C([0,T],\bR^d),\bR)$ and $\epsilon>0$, the non-anticipative functional $F^\epsilon=(F^\epsilon_t)_{t\in[0,T]}$ defined by \eqref{eq:defFteps} is twice vertically differentiable. The second vertical derivative $\nabla_x^2 F^\epsilon=(\nabla_x^2 F^\epsilon_t)_{t\in[0,T]}$ is left-continuous and boundedness-preserving, i.e., $\nabla_x^2 F^\epsilon\in\bC^{0,0}_l([0,T])\cap\bB([0,T])$, and is given by
\begin{equation}\label{eq:nanab}
\begin{aligned}
&(\nabla_x(\nabla_x F^{\epsilon}_t)_i)_j=(\nabla_x^2 F^\epsilon_t(x))(e_i,e_j)\\
&=\bE\big[D^2f^\epsilon(X^{t,x}_T)\,\big(\one_{[t,T]}D^{e_i}X^{t,x(t)}_T,\,\one_{[t,T]}D^{e_j}X^{t,x(t)}_T\big) +Df^\epsilon(X^{t,x})\,\big(\one_{[t,T]}D^{e_i+e_j}X^{t,x(t)}_T\big)\big]
\end{aligned}
\end{equation}
$t\in [0,T]$, $x\in D([0,t],\bR^d)$, $i,j\in\{1,\ldots,d\}$. Moreover,
\begin{equation}\label{eq:nab2e}
|\nabla_x^2 F^\epsilon_t(x)|\leq C \big(1+\nnrm{x}{D([0,t],\bR^d)}^q\big)
\end{equation}
for all $t\in[0,T]$ and $x\in D([0,t],\bR^d)$, where $C,\,q\in(0,\infty)$ does not depend on $t$, $x$ or $\epsilon$.
\end{lemma}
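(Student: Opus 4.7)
The plan is to mirror, one order higher, the strategy used in Lemma~\ref{lem:regularityFepsilon2}. For fixed $t\in[0,T]$, $x=x_t\in D([0,t],\bR^d)$, $i,j\in\{1,\ldots,d\}$ and $\delta>0$ small, I would apply the differentiation lemma for parameter-dependent integrals to the map
\[
(-\delta,\delta)\ni h\;\longmapsto\;\bE\bigl[Df^\epsilon(X^{t,x_t^{he_j}}_T)\,(\one_{[t,T]}D^{e_i}X^{t,x(t)+he_j}_T)\bigr],
\]
which by Lemma~\ref{lem:regularityFepsilon2} equals $(\nabla_x F_t^\epsilon(x_t^{he_j}))_i$. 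Differentiating inside the expectation with the product and chain rules, using Corollary~\ref{cor:chainRuleDalphaX} for $|\alpha|=1$ (to turn the derivative of $X^{t,\,\cdot\,}_T$ with respect to the initial condition $x(t)+he_j$ into $D^{e_i+e_j}X^{t,x(t)+he_j}_T$), yields precisely the integrand in \eqref{eq:nanab}. Permissibility of the interchange follows from polynomial growth of $D^2 f$ and $Df$ (inherited by $f^\epsilon$ through $\|M^\epsilon\|=1$), combined with \eqref{eq:unifLpEstDalphaX} and \eqref{eq:standardEstInitialCond1}: a Cauchy--Schwarz bound with a single $\sup_{\xi\in B_\delta(x(t))}$ over the derivatives of the flow produces an $L^p(\Omega)$-dominant, as in Lemma~\ref{lem:regularityFepsilon2}.

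Once \eqref{eq:nanab} is established, the boundedness-preserving estimate \eqref{eq:nab2e} is a direct Cauchy--Schwarz computation, identical in structure to \eqref{eq:dfpg}: bound $D^2 f^\epsilon(X^{t,x_t}_T)$ and $Df^\epsilon(X^{t,x_t}_T)$ in operator norm by $C(1+\|X^{t,x_t}_T\|^p)$, bound $\|\one_{[t,T]}D^{e_i}X^{t,x(t)}_T\|$, $\|\one_{[t,T]}D^{e_i+e_j}X^{t,x(t)}_T\|$ in $L^2(\Omega)$ uniformly in $(t,x(t))$ by Theorem~\ref{thm:regularityInitialCond}(ii), and then use \eqref{eq:standardEstInitialCond1} to replace the moment of $X^{t,x_t}_T$ by $C(1+\|x_t\|^q)$.

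The genuine work is the left-continuity. Given a sequence $x^n=x^n_{t^n}\in\Lambda$ with $t^n\in[0,t]$ and $d_\infty(x^n,x)\to 0$, I must show $\nabla_x^2 F^\epsilon_{t^n}(x^n)\to \nabla_x^2 F^\epsilon_t(x)$ along a subsequence. I split the difference into two groups corresponding to the two summands in \eqref{eq:nanab}. For the $D^2 f^\epsilon$ term I insert and subtract $D^2 f^\epsilon(X^{t^n,x^n}_T)\,(\one_{[t,T]}D^{e_i}X^{t,x(t)}_T,\one_{[t,T]}D^{e_j}X^{t,x(t)}_T)$: the first resulting piece is handled by Cauchy--Schwarz together with \eqref{eq:proofFepsCl1} and Lemma~\ref{lem:LpConvImpliesWeakConv} applied to $\varphi=D^2 f^\epsilon$ (giving $L^2$-convergence of $D^2 f^\epsilon(X^{t^n,x^n}_T)-D^2 f^\epsilon(X^{t,x}_T)$), and the second is a bilinear analogue of \eqref{eq:proofNablaFepsCl3}--\eqref{eq:proofNablaFepsCl6}, using again the mollifier bound $\|M^\epsilon\|_{\bo(C,L^1)}=(\epsilon/2)^{-1}$ so that only $L^1$-convergence of the flow derivatives is needed, which follows from Corollary~\ref{cor:ContDalphaX}, Corollary~\ref{cor:chainRuleDalphaX}, and the time-homogeneity argument of \eqref{eq:proofNablaFepsCl5} to dispose of the interval $[t^n,t]$. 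The $Df^\epsilon$ term is treated in exactly the same manner, with the higher-order derivative $D^{e_i+e_j}X^{t^n,x^n(t^n)}_T$ converging to $D^{e_i+e_j}X^{t,x(t)}_T$ in $L^p(\Omega;L^1([0,T],\bR^d))$ by Corollary~\ref{cor:ContDalphaX} combined with the Faà di Bruno chain rule Corollary~\ref{cor:chainRuleDalphaX}(ii) applied to $\alpha=e_i+e_j$, which expresses the restriction $D^{e_i+e_j}X^{t^n,x^n(t^n)}|_{[t,T]}$ as a polynomial in $D X^{t,X^{t^n,x^n(t^n)}(t)}_T$, $D^2 X^{t,X^{t^n,x^n(t^n)}(t)}_T$ and $D^\alpha X^{t^n,x^n(t^n)}(t)$ with $|\alpha|\leq 2$. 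This Faà di Bruno expansion, together with the $L^p$ bounds from Theorem~\ref{thm:regularityInitialCond}(ii), is the technically demanding step and the main obstacle; apart from that the argument is a bookkeeping extension of Lemma~\ref{lem:regularityFepsilon2}.

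Passing to a subsequence wherever $L^p$-convergence is needed to upgrade to almost-sure convergence (as in the transition from \eqref{eq:proofNablaFepsCl6} onwards), and combining the two groups of estimates, yields the left-continuity of $\nabla_x^2 F^\epsilon$ and completes the proof.
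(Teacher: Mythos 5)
Your proposal matches the paper's proof essentially step for step: differentiate under the expectation with the product and chain rules, verify domination via the polynomial growth of $Df^\epsilon$, $D^2f^\epsilon$ together with the $L^p$-bounds for the flow derivatives, obtain \eqref{eq:nab2e} by Cauchy--Schwarz exactly as in \eqref{eq:dfpg}, and prove left-continuity via a telescoping split of the bilinear term combined with Lemma~\ref{lem:LpConvImpliesWeakConv} and nested subsequence extraction. One small slip: the passage from $\frac{\dl}{\dl h}D^{e_i}X^{t,x(t)+he_j}_T$ to $D^{e_i+e_j}X^{t,x(t)+he_j}_T$ is justified by Theorem~\ref{thm:regularityInitialCond}(i) (smoothness of $\xi\mapsto X^{t,\xi}_T$ in $C([t,T],\bR^d)$), not by Corollary~\ref{cor:chainRuleDalphaX}, which concerns chaining derivatives through an intermediate time and is instead what you need later in the left-continuity step.
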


\begin{proof}
The proof of the statement follows a line analogous to the proof of Lemma \ref{lem:regularityFepsilon2} and therefore we only give a short sketch. We fix $t\in[0,T]$, $x=x_t\in D([0,t],\bR^d)$, $i,j\in\{1,\ldots,d\}$ and apply the differentiation lemma for parameter-dependent integrals to the mapping
\[(-\delta,\delta)\times\Omega\ni(h,\omega)\mapsto Df^\epsilon\big(X^{t,x_t^{he_j}}_T(\omega)\big)\big(\one_{[t,T]}D^{e_i}\!X^{t,x(t)+he_j}_T(\omega)\big)\in\bR,\]
where $\delta>0$ and $x_t^{he_j}\in D([0,t],\bR^d)$ is the vertical perturbation of $x_t$ by $he_j\in\bR^d$. For fixed $\omega$, we apply the product rule to a mapping of the form $(-\delta,\delta)\ni h\mapsto A_hf_h$, where $h\mapsto A_h\in \bo(D([0,T],\bR^d),\bR)$ and $h\mapsto f_h\in D([0,T],\bR^d)$ are Fr\'echet differentiable, which takes the usual form (with an analogous proof to the real case) $\frac{\dl}{dh}(A_hf_h)=(\frac{\dl}{\dl h}A_h)f_h+A_h\frac{\dl}{\dl h}f_h$. Furthermore, $A_h$ takes the form $A_h=B(g_h)$ where $h\mapsto g_h\in D([0,T],\bR^d)$ and $B\colon D([0,T],\bR^d)\to \bo(D([0,T],\bR^d),\bR)$ are also Fr\'echet differentiable and hence, by the chain rule,  $\frac{\dl}{dh}(A_hf_h)=DB(g_h)[\frac{\dl}{dh}g_h]f_h+A_h\frac{\dl}{\dl h}f_h$. Thus,
\begin{equation}
\begin{aligned}
&\frac{\dl}{\dl h}Df^\epsilon\big(X^{t,x_t^{he_j}}_T(\omega)\big)\big(\one_{[t,T]}D^{e_i}\!X^{t,x(t)+he_j}_T(\omega)\big)\\
&=D^2f^\epsilon\big(X^{t,x_t^{he_j}}_T(\omega)\big)\big(\one_{[t,T]}D^{e_j}\!X^{t,x(t)+he_j}_T(\omega),\one_{[t,T]}D^{e_i}\!X^{t,x(t)+he_j}_T(\omega)\big)\\
&\quad\quad+Df^\epsilon\big(X^{t,x_t^{he_j}}_T(\omega)\big)\big(\one_{[t,T]}D^{e_i}D^{e_j}\!X^{t,x(t)+he_j}_T(\omega)\big).
\end{aligned}
\end{equation}
Using the polynomial growth of $Df^\epsilon$ and $D^2f^\epsilon$ together with Theorem~\ref{thm:regularityInitialCond}(i) this implies, as in the proof of Lemma \ref{lem:regularityFepsilon2}, that there exist $C,q\in(0,\infty)$ such that, for all $h\in(-\delta,\delta)$
\begin{equation*}
\begin{aligned}
&\left|\frac{\dl}{\dl h}Df^\epsilon\big(X^{t,x_t^{he_j}}_T\big)\big(\one_{[t,T]}D^{e_i}\!X^{t,x(t)+he_j}_T\big)\right|\\
&\le C\left(1+\nnrm{x_t}{D([0,t],\bR^d)}+\sup_{\xi\in B_\delta(x(t))}\nnrm{X^{t,\xi}_T}{C([t,T],\bR^d)}\right)^q\times\\
&\quad\left(\sup_{\xi\in B_\delta(x(t))}\nnrm{D^{e_i}X^{t,\xi}_T}{C([t,T],\bR^d)}\sup_{\xi\in B_\delta(x(t))}\nnrm{D^{e_j}X^{t,\xi}_T}{C([t,T],\bR^d)}\right.\\
&\qquad
+\left.
\sup_{\xi\in B_\delta(x(t))}\nnrm{D^{e_i+e_j}X^{t,\xi}_T}{C([t,T],\bR^d)}\right),
\end{aligned}
\end{equation*}
where the last the upper bound belongs to $L^p(\Omega)$ for every $p\in[1,\infty)$ by \eqref{eq:unifLpEstDalphaX}. Therefore, using also the symmetry of $D^2f^\epsilon$,
\begin{equation*}
\begin{aligned}
&(\nabla_x(\nabla_x F^{\epsilon}_t)_i)_j=\frac{\dl}{\dl h}\Big(\bE\Big[ Df^\epsilon\big(X^{t,x_t^{he_j}}_T\big)\big(\one_{[t,T]}D^{e_i}\!X^{t,x(t)+he_j}_T\big)\Big]\Big)\Big|_{h=0}\\
&=\bE\Big[\frac{\dl}{\dl h}\big(Df^\epsilon\big(X^{t,x_t^{he_j}}_T\big)\big(\one_{[t,T]}D^{e_i}\!X^{t,x(t)+he_j}_T\big)\big)\big|_{h=0}\Big]\\
&=\bE\big[D^2f^\epsilon(X^{t,x_t}_T)\,\big(\one_{[t,T]}D^{e_i}X^{t,x(t)}_T,\,\one_{[t,T]}D^{e_j}X^{t,x(t)}_T\big)  +Df^\epsilon(X^{t,x_t}_T)\,\big(\one_{[t,T]}D^{e_i+e_j}X^{t,x(t)}_T\big)\big].
\end{aligned}
\end{equation*}
The proof of the left continuity of the second term is essentially identical to the proof of the left continuity of $\nabla_x F^{\epsilon}$ .
For the left continuity of the first term one uses a telescoping sum and H\"older's inequality to get
\begin{equation*}
\begin{aligned}
&\left|\bE\big[D^2f^\epsilon(X^{t,x})\,\big(\one_{[t,T]}D^{e_i}X^{t,x(t)}_T,\,\one_{[t,T]}D^{e_j}X^{t,x(t)}_T\big)\big]\right. \\
&\left.-\bE\big[D^2f^\epsilon(X^{t^n,x^n})\,\big(\one_{[t^n,T]}D^{e_i}X^{t^n,x^n(t)}_T,\,\one_{[t^n,T]}D^{e_j}X^{t^n,x^n(t)}_T\big)\big]\right|
:=|\bE(A(u,v)-A_n(u_n,v_n))|\\
&\le |\bE(A_n(u_n,v-v_n))|+|\bE((A-A_n)(u_n,v))|+|\bE(A(u-u_n,v))|\\
&\le \|A_n\|_{L^4(\Omega;\bo^{(2)}(L^1([0,T],\bR^d),\bR))}\|u_n\|_{L^4(\Omega;L^1([0,T],\bR^d))}\|v-v_n\|_{L^2(\Omega,L^1([0,T],\bR^d))}\\
&+\|A-A_n\|_{L^2(\Omega;\bo^{(2)}(D([0,T],\bR^d),\bR))}\|u_n\|_{L^4(\Omega;D([0,T],\bR^d))}\|v\|_{L^4(\Omega,D([0,T],\bR^d))}\\
&+\|A\|_{L^4(\Omega;\bo^{(2)}(L^1([0,T],\bR^d),\bR))}\|u-u_n\|_{L^2(\Omega;L^1([0,T],\bR^d))}\|v_n\|_{L^4(\Omega;L^1([0,T],\bR^d))}\\
&:=a_n+b_n+c_n.
\end{aligned}
\end{equation*}
Now $b_n$ can be treated as \eqref{eq:proofNablaFepsCl2} using Lemma~\ref{lem:LpConvImpliesWeakConv} with
$$
B=D([0,T],\bR^d),~S=\bo^{(2)}(D([0,T],\bR^d),\bR),~Y=X^{t,x}_T,~Y_n=X^{t^n,x^n}_T,~\varphi=D^2 f^\epsilon.
$$
The terms $a_n$ and $c_n$ can be handled analogously to error term $B$ in \eqref{eq:proofNablaFepsCl3}, where we first select a subsequence such that $a_{n_k}\to 0$, then a further subsequence such that $c_{n_{k_l}}\to 0$. This will finally show that for every $x=x_t\in D([0,t],\bR^d)\subset\Lambda$ and every sequence $(x^n)_{n\in\bN}\subset\Lambda$ with $x^n=x^n_{t^n}\in D([0,t^n],\bR^d)$, $t^n\in[0,t]$, and
$d_\infty(x^n,x)\xrightarrow{n\to\infty}0$, there exists a subsequence $(x^{n_{k_l}})_{l\in\bN}$ such that
$\nabla^2_x F^\epsilon_{t^{n_{k_l}}}(x^{n_{k_l}})\xrightarrow{l\to\infty} \nabla^2_xF^\epsilon_t(x)$ verifying the left-continuity of $\nabla^2_x F$.

Finally, the estimate \eqref{eq:nab2e} (and hence that $\nabla^2_xF$ is boundedness preserving) follows from \eqref{eq:nanab} by analogous estimates as in \eqref{eq:dfpg}, using the polynomial growth of $$Df^\epsilon\colon D([0,T],\bR^d)\to\bo(D([0,T],\bR^d),\bR)$$ and $D^2f^\epsilon\colon D([0,T],\bR^d)\to\bo^{(2)}(D([0,T],\bR^d),\bR)$ combined with Theorem~\ref{thm:regularityInitialCond}(ii) and the estimate \eqref{eq:standardEstInitialCond1}.
\end{proof}
\begin{remark}\label{rem:nabn}
In a completely analogous fashion, with more notational effort, one can prove that
 if $f\in C^n_p(C([0,T],\bR^d),\bR)$ and $\epsilon>0$, then the non-anticipative functional $F^\epsilon=(F^\epsilon_t)_{t\in[0,T]}$ defined by \eqref{eq:defFteps} is $n$-times vertically differentiable, $n\in\bN$. The $n$-th vertical derivative $\nabla_x^n F^\epsilon=(\nabla_x^n F^\epsilon_t)_{t\in[0,T]}$ is left-continuous and boundedness-preserving, i.e., $\nabla_x^n F^\epsilon\in\bC^{0,0}_l([0,T])\cap\bB([0,T])$, and
\begin{equation*}
|\nabla_x^n F^\epsilon_t(x)|\leq C \big(1+\nnrm{x}{D([0,t],\bR^d)}^q\big)
\end{equation*}
for all $t\in[0,T]$ and $x\in D([0,t],\bR^d)$, where $C,\,q\in(0,\infty)$ does not depend on $t$, $x$ or $\epsilon$.
\end{remark}

\begin{lemma}\label{lem:regularityFepsilon4}
If $f\in C^1_p(C([0,T],\bR^d),\bR)$ and $\epsilon>0$, the non-anticipative functional $F^\epsilon=(F^\epsilon_t)_{t\in[0,T]}$ defined by \eqref{eq:defFteps} is horizontally differentiable. The horizontal derivative $\cD F^\epsilon=(\cD_t F^\epsilon)_{t\in[0,T)}$ is continuous at fixed times, and the extension $(\cD_t F^\epsilon)_{t\in[0,T]}$ of $(\cD_t F^\epsilon)_{t\in[0,T)}$ by zero belongs to the class $\bB([0,T])$. The horizontal derivative is given by
\begin{equation}\label{eq:dtf}
\cD_t F^\epsilon(x)=\bE \big[ Df(M^\epsilon X^{t,x}_{T})\,\big(x(t)\eta_\epsilon(\cdot -t)-\int_{t}^{T}\eta_{\epsilon}'(\cdot-r)X^{t,x(t)}(r)\,\dl r\big)\big],
\end{equation}
$t\in [0,T)$, $x\in D([0,t],\bR^d)$. Moreover,
\begin{equation}\label{eq:dtb}
|\cD_t F^\epsilon(x)|\leq C_{\epsilon} \big(1+\nnrm{x}{D([0,t],\bR^d)}^q\big)
\end{equation}
for all $t\in[0,T)$ and $x\in D([0,t],\bR^d)$, where $C_{\epsilon},q\in(0,\infty)$ do not depend on $t$ or $x$.
\end{lemma}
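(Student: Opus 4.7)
The plan is to reduce the definition of $\cD_t F^\epsilon(x)$ to a deterministic differentiation problem by first coupling through a single random path. By the time-homogeneity of \eqref{eq:SDE}, the law of $(X^{t+h,x(t)}(s))_{s\in[t+h,T]}$ coincides with that of $(X^{t,x(t)}(s-h))_{s\in[t+h,T]}$. Introducing the $C([0,T],\bR^d)$-valued process $\tilde Y^h$ defined by $\tilde Y^h(s)=x(s)$ on $[0,t]$, $\tilde Y^h(s)=x(t)$ on $[t,t+h]$ and $\tilde Y^h(s)=X^{t,x(t)}(s-h)$ on $[t+h,T]$, this yields $F^\epsilon_{t+h}(x_{t,h})=\bE\big[f(M^\epsilon\tilde Y^h_T)\big]$ while $F^\epsilon_t(x)=\bE\big[f(M^\epsilon\tilde Y^0_T)\big]$. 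The advantage is that the dependence on $h$ is now entirely explicit and all randomness is realized through the single process $X^{t,x(t)}$.

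Next I would compute $\tfrac{\dl}{\dl h}(M^\epsilon\tilde Y^h_T)(u)\big|_{h=0}$ pathwise for each $u\in[0,T]$. Splitting $(M^\epsilon\tilde Y^h_T)(u)=\int_{u-\epsilon}^u\eta_\epsilon(u-s)\overline{\tilde Y^h}(s)\,\dl s$ into the pieces coming from $[0,t]$, $[t,t+h]$ and $[t+h,T]$, and substituting $r=s-h$ in the last piece, an application of Leibniz's rule produces two boundary contributions involving $\eta_\epsilon$ at $0$ or $\epsilon$, both of which vanish for the standard mollifier. What remains is exactly
\[
\frac{\dl}{\dl h}(M^\epsilon\tilde Y^h_T)(u)\bigg|_{h=0}=x(t)\eta_\epsilon(u-t)-\int_t^T\eta_\epsilon'(u-r)X^{t,x(t)}(r)\,\dl r=:\psi(x,t)(u).
\]

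To pass from this pathwise derivative to the derivative of $\bE[f(M^\epsilon\tilde Y^h_T)]$ at $h=0$, I would invoke the differentiation lemma for parameter-dependent integrals together with the chain rule $\tfrac{\dl}{\dl h}f(M^\epsilon\tilde Y^h_T)=Df(M^\epsilon\tilde Y^h_T)\big[\tfrac{\dl}{\dl h}M^\epsilon\tilde Y^h_T\big]$. An analogous explicit computation shows that $\big\|\tfrac{\dl}{\dl h}M^\epsilon\tilde Y^h_T\big\|_{C([0,T],\bR^d)}\leq C_\epsilon\big(|x(t)|+\|X^{t,x(t)}_T\|_{C([t,T],\bR^d)}\big)$ uniformly for $h$ in a neighborhood of $0$, where the $\epsilon$-dependence enters through $\|\eta_\epsilon\|_\infty$ and $\|\eta_\epsilon'\|_\infty$. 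Combined with the polynomial growth of $Df$ and the moment bound \eqref{eq:standardEstInitialCond1}, this produces an integrable dominating function, establishes formula \eqref{eq:dtf}, and immediately yields \eqref{eq:dtb} by Cauchy--Schwarz, absorbing $\|\eta_\epsilon\|_\infty$ and $\|\eta_\epsilon'\|_\infty$ into $C_\epsilon$.

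For continuity at fixed $t\in[0,T)$, suppose $x^n\to x$ in $(D([0,t],\bR^d),\nnrm{\cdot}{D([0,t],\bR^d)})$. Then \eqref{eq:standardEstInitialCond2} gives $X^{t,x^n}_T\to X^{t,x}_T$ in $L^p(\Omega;D([0,T],\bR^d))$, hence $M^\epsilon X^{t,x^n}_T\to M^\epsilon X^{t,x}_T$ in the same sense and $\psi(x^n,t)\to\psi(x,t)$ in $L^p(\Omega;C([0,T],\bR^d))$. Using Lemma~\ref{lem:LpConvImpliesWeakConv} with $\varphi=Df$ to obtain $Df(M^\epsilon X^{t,x^n}_T)\to Df(M^\epsilon X^{t,x}_T)$ along a subsequence, and combining via Hölder's inequality and dominated convergence as in the passage \eqref{eq:proofNablaFepsCl1}--\eqref{eq:proofNablaFepsCl6} from the proof of Lemma~\ref{lem:regularityFepsilon2}, one concludes $\cD_tF^\epsilon(x^n)\to\cD_tF^\epsilon(x)$. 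The boundedness-preserving property of the zero-extension to $[0,T]$ is immediate from \eqref{eq:dtb}, whose constant is independent of $t$. The main technical obstacle I anticipate is the coupling step itself: the identity $F^\epsilon_{t+h}(x_{t,h})=\bE[f(M^\epsilon\tilde Y^h_T)]$ holds only in distribution and must be formulated carefully at the level of full paths, since $X^{t+h,x(t)}$ and $X^{t,x(t)}$ are driven by genuinely different increments of $W$ and no pathwise coupling is available without invoking time-homogeneity of the marginal law.
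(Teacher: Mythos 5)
Your proposal is correct and follows essentially the same route as the paper: coupling $F^\epsilon_{t+h}(x_{t,h})$ to a single path through the time-homogeneity of \eqref{eq:SDE} (your $\tilde Y^h$ is exactly the paper's $x_t\oplus T_h X^{t,x(t)}_T$), computing the pathwise derivative of $M^\epsilon$ explicitly with Leibniz and the vanishing boundary term, invoking the differentiation lemma for parameter-dependent integrals with a dominating function built from the polynomial growth of $Df$ and \eqref{eq:standardEstInitialCond1}, and deducing the bound and continuity at fixed times from \eqref{eq:dtf}. The only slight imprecision is the bookkeeping of "two boundary contributions that vanish" — in fact $x(t)\eta_\epsilon(\cdot-t)$ is itself a surviving boundary term from the middle piece, while the single vanishing one comes from the upper limit $T$ after substitution — but this does not affect the correctness of the argument or the final formula.
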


\begin{proof}
Fix $t\in[0,T)$. In order to verify that $F^\epsilon$ is horizontally differentiable at $t$, we have to show that for every $x=x_t\in D([0,t],\bR^d)$ the right derivative
\begin{equation}\label{eq:proofExDtFeps1}
\cD_t F^\epsilon(x)=\frac{\dl^+}{\dl h}\bE f^\epsilon\big(X^{t+h,x_{t,h}}_T\big)\Big|_{h=0}
=\lim_{h\searrow0}\frac 1h\bE\big[f^\epsilon\big(X^{t+h,x_{t,h}}_T\big)-f^\epsilon\big(X^{t,x_{t}}_T\big)\big]
\end{equation}
exists. For $x\in D([0,t],\bR^d)$ and $y\in D([t,T],\bR^d)$, let $x\oplus y\in D([0,T],\bR^d)$ denote the càdlàg function defined by
\[x\oplus y\,(s):=
\begin{cases}
x(s),& s\in[0,t)\\
y(s),& s\in[t,T].
\end{cases}
\]
Moreover, for $h\in[0,T-t]$, let $T_h\colon D([t,T],\bR^d)\to D([t,T],\bR^d)$ be the translation operator defined by
\[(T_h y)(s):=
\begin{cases}
y(t),& s\in[t,t+h)\\
y(s-h),& s\in[t+h,T].
\end{cases}
\]
Note that, due to the time-homogeneity of Eq.~\eqref{eq:SDE}, the $D([0,T],\bR^d)$-valued random variables $X^{t+h,x_{t,h}}_T$ and $x_t\oplus T_h X^{t,x(t)}_T$ have the same distribution. As a consequence, we can rewrite \eqref{eq:proofExDtFeps1} as
\begin{equation}\label{eq:proofExDtFeps2}
\begin{aligned}
\cD_t F^\epsilon(x)=\frac{\dl^+}{\dl h}\bE f^\epsilon\big(x_t\oplus T_h X^{t,x(t)}_T\big)\Big|_{h=0}
&= \frac{\dl^+}{\dl h}\bE f\big(M^\epsilon\big[x_t\oplus T_h X^{t,x(t)}_T\big]\big)\Big|_{h=0}
\end{aligned}
\end{equation}
Now, for $y\in D([t,T],\bR^d)$ and $s\in[0,T]$,
\begin{equation}
\begin{aligned}
&M^\epsilon\big[x_t\oplus T_h y\big](s)\\
&=\int_{-\epsilon}^t\eta_\epsilon(s-r)\overline{x_t}(r)\,\dl r+\int_{t}^{t+h}\eta_\epsilon(s-r)y(t)\,\dl r+\int_{t+h}^{T}\eta_\epsilon(s-r)y(r-h)\,\dl r\\
&=\int_{-\epsilon}^t\eta_\epsilon(s-r)\overline{x_t}(r)\,\dl r+y(t)\int_{t}^{t+h}\eta_\epsilon(s-r)\,\dl r+\int_{t}^{T-h}\eta_\epsilon(s-r-h)y(r)\,\dl r
\end{aligned}
\end{equation}
and therefore, as $\supp\eta_\epsilon\subset [0,\epsilon]$ and $s\in [0,T]$ (and hence the boundary term vanishes when differentiating the third integral above),
\begin{equation*}
\begin{aligned}
\frac{\dl^+}{\dl h}M^\epsilon\big[x_t\oplus T_h y\big](s)&=\eta_{\epsilon}(s-t-h)y(t)-\int_{t}^{T-h}\eta_{\epsilon}'(s-r-h)y(r)\,\dl r\\
&=\eta_{\epsilon}(s-t-h)y(t)-\int_{t+h}^{T}\eta_{\epsilon}'(s-r)y(r-h)\,\dl r,~s\in [0,T].
\end{aligned}
\end{equation*}
The above calculation is also valid uniformly with respect to $s\in [0,T]$; that is, in $C([0,T],\bR^d)$, as $\eta$ is $C^{\infty}$ with compact support.
In order to differentiate under the expectation sign in \eqref{eq:proofExDtFeps2}, for $h\in[0,T-t]$, we have the bound
\begin{equation}\label{eq:dti}
\begin{aligned}
&\left|\frac{\dl^+}{\dl h} f\big(M^\epsilon\big[x_t\oplus T_h X^{t,x(t)}_T\big]\big)\right|\\
&=\left|Df\big(M^\epsilon\big[x_t\oplus T_h X^{t,x(t)}_T\big]\big)\,\big(x(t)\eta_\epsilon(\cdot -t-h)-\int_{t+h}^{T}\eta_{\epsilon}'(\cdot-r)X^{t,x(t)}(r-h)\,\dl r\big)\right|\\
&\le C\left(1+\left\|x_t\oplus T_h X^{t,x(t)}_T\right\|_{D([0,T],\bR^d)}\right)^{q'}\times\\
&\quad\quad\left\| x(t)\eta_\epsilon(\cdot -t-h)-\int_{t+h}^{T}\eta_{\epsilon}'(\cdot-r)X^{t,x(t)}(r-h)\,\dl r\right\|_{C([0,T],\bR^d)}\\
&\le C_{\epsilon}\left(1+\|x_t\|_{D([0,t],\bR^d)}+\left\|X^{t,x(t)}_{T}\right\|_{C([t,T],\bR^d)}\right)^{q'}\left\|X^{t,x(t)}_{T}\right\|_{C([t,T],\bR^d)}
\end{aligned}
\end{equation}
where the last upper bounds belongs to $L^p(\Omega)$ for every $p\in [1,\infty)$ due to \eqref{eq:standardEstInitialCond1}. Therefore, by \eqref{eq:proofExDtFeps2}, it follows that
\begin{equation}
\begin{aligned}
\cD_t F^\epsilon(x)&=
\frac{\dl^+}{\dl h}\bE f\big(M^\epsilon\big[x_t\oplus T_h X^{t,x(t)}_T\big]\big)\Big|_{h=0}=\bE\frac{\dl^+}{\dl h} f\big(M^\epsilon\big[x_t\oplus T_h X^{t,x(t)}_T\big]\big)\Big|_{h=0}\\
&=\bE \big[ Df(M^\epsilon X^{t,x}_{T})\,\big(x(t)\eta_\epsilon(\cdot -t)-\int_{t}^{T}\eta_{\epsilon}'(\cdot-r)X^{t,x}(r)\,\dl r\big],
\end{aligned}
\end{equation}
for $t\in [0,T)$ and $x\in D([0,t],\bR^d)$. The continuity of $\cD F^\epsilon$ at fixed times now follow from the formula \eqref{eq:dtf} and the continuity of $\xi\mapsto X^{t,\xi}_T$ asserted by Theorem~\ref{thm:regularityInitialCond}(i).
Finally, \eqref{eq:dtb} follows from \eqref{eq:dti} and \eqref{eq:standardEstInitialCond1} and therefore the extension $(\cD_t F^\epsilon)_{t\in[0,T]}$ of $(\cD_t F^\epsilon)_{t\in[0,T)}$ by zero belongs to the class $\bB([0,T])$.
\end{proof}

\begin{remark}\label{rem:dtnab}
Using the formulae for $\nabla_xF^{\epsilon}$ and $\nabla_x^2F^{\epsilon}$ from Lemmata \ref{lem:regularityFepsilon2} and \ref{lem:regularityFepsilon3}, respectively, and arguments completely analogous to the ones in Lemma \ref{lem:regularityFepsilon4} one also has that $\nabla_xF^{\epsilon}$ and $\nabla_x^2F^{\epsilon}$ are horizontally differentiable, if
$f\in C^2_p(C([0,T],\bR^d),\bR)$ and $f\in C^3_p(C([0,T],\bR^d),\bR)$, respectively (in fact, $\nabla_x^nF^{\epsilon}$ is horizontally differentiable, if $f\in C^{n+1}_p(C([0,T],\bR^d),\bR)$ for all $n\in \bN$). For $n=1,2$ the horizontal derivative $\cD \nabla_x^nF^\epsilon=(\cD_t \nabla_x^n F^\epsilon)_{t\in[0,T)}$ is continuous at fixed times, and the extension $(\cD_t \nabla_x^nF^\epsilon)_{t\in[0,T]}$ of $(\cD_t \nabla_x^n F^\epsilon)_{t\in[0,T)}$ by zero belongs to the class $\bB([0,T])$. Moreover,
\begin{equation*}
|\cD_t \nabla_x^n F^\epsilon(x)|\leq C_{\epsilon} \big(1+\nnrm{x}{D([0,t],\bR^d)}^q\big)
\end{equation*}
for all $t\in[0,T)$ and $x\in D([0,t],\bR^d)$, where $C_{\epsilon},q\in(0,\infty)$ do not depend on $t$ or $x$. For example, using that the $D([0,T],\bR^d)\times D([0,T],\bR^d)$-valued random variables $$\left(X^{t+h,x_{t,h}}_T, \one_{[t+h,T]}D^{e_i}X^{t+h,x_{t,h}}\right)\text{ and }\left(x_t\oplus T_hX_T^{t,x(t)},T_h(\one_{[t,T]}D^{e_i}\!X^{t,x(t)}_T)\right)$$ have the same distribution one can calculate, as in Lemma \ref{lem:regularityFepsilon4},
\begin{equation*}
\begin{aligned}
&(\mathcal{D}_t\nabla_xF^{\epsilon})_i(x)=\\
&\bE\big[D^2f(M^{\epsilon}X^{t,x}_T)\,\big(x(t)\eta_\epsilon(\cdot -t)-\int_{t}^{T}\eta_{\epsilon}'(\cdot-r)X^{t,x(t)}_{T}(r)\,\dl r,M^{\epsilon}[\one_{[t,T]}D^{e_i}\!X^{t,x(t)}_T]\big)\big]\\
&+\bE \big[ Df(M^\epsilon X^{t,x}_{T})\,\big(e_i\eta_\epsilon(\cdot -t)-\int_{t}^{T}\eta_{\epsilon}'(\cdot-r)D^{e_i}X^{t,x(t)}_{T}(r)\,\dl r\big)\big].
\end{aligned}
\end{equation*}
Furthermore, using the formula for $\cD F$ from Lemma \ref{lem:regularityFepsilon4} and arguments analogous to those in the proof of Lemmata \ref{lem:regularityFepsilon2} and \ref{lem:regularityFepsilon3} one can explicitly check, for $n=1,2$, that $\cD F$ is $n$-times vertically differentiable if $f\in C^{n+1}_p(C([0,T],\bR^d),\bR)$ and $\nabla_x^n\cD F=\cD \nabla^n_x F$ (in fact this holds for general $n\in \bN$).
\end{remark}

In summary, the combination of Lemmata \ref{lem:regularityFepsilon1}, \ref{lem:regularityFepsilon2}, \ref{lem:regularityFepsilon3} and \ref{lem:regularityFepsilon4} implies the desired regularity of $F^\epsilon$.

\begin{theorem}\label{thm:regularityFepsilon}
If $f\in C^2_p(C([0,T],\bR^d),\bR)$ and $\epsilon>0$, the non-anticipative functional $F^\epsilon=(F^\epsilon_t)_{t\in[0,T]}$ defined by \eqref{eq:defFteps} belongs to the class $\bC^{1,2}_b([0,T])$. The vertical and horizontal derivatives are given by \eqref{eq:nablaF}, \eqref{eq:nanab} and \eqref{eq:dtf}.
\end{theorem}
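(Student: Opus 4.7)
The theorem is a direct assembly result: the class $\bC^{1,2}_b([0,T])$ of Definition~\ref{def:regularFunctionals} demands exactly four bundles of regularity for $F^\epsilon$, and each bundle has already been established in a separate lemma of this section. My plan is therefore to verify each requirement of Definition~\ref{def:regularFunctionals} by citing the appropriate lemma and to collect the formulas for the derivatives at the end.

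First, I would recall that the class membership $F^\epsilon \in \bC^{0,0}_l([0,T]) \cap \bB([0,T])$ together with the polynomial growth bound $|F^\epsilon_t(x)| \leq C(1+\|x\|_{D([0,t],\bR^d)}^q)$ uniformly in $t$, $x$, $\epsilon$ is precisely the content of Lemma~\ref{lem:regularityFepsilon1}, which applies because $f \in C^2_p \subset C_p$. This disposes of the base left-continuity and boundedness-preservation requirement on $F^\epsilon$ itself.

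Next, I would invoke Lemma~\ref{lem:regularityFepsilon4} (for the horizontal bullet) and Lemmas~\ref{lem:regularityFepsilon2} and \ref{lem:regularityFepsilon3} (for the two vertical bullets), all of whose hypotheses are satisfied since $f \in C^2_p(C([0,T],\bR^d),\bR) \subset C^1_p(C([0,T],\bR^d),\bR)$. Lemma~\ref{lem:regularityFepsilon4} yields horizontal differentiability of $F^\epsilon$ on $[0,T)$, the continuity of $\cD F^\epsilon$ at fixed times, and the fact that the extension of $\cD F^\epsilon$ by zero at $t=T$ lies in $\bB([0,T])$ (thanks to the uniform-in-$(t,x)$ polynomial bound \eqref{eq:dtb}). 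Lemma~\ref{lem:regularityFepsilon2} gives $\nabla_x F^\epsilon \in \bC_l^{0,0}([0,T]) \cap \bB([0,T])$ together with the explicit formula \eqref{eq:nablaF}, and Lemma~\ref{lem:regularityFepsilon3} gives the corresponding statement for $\nabla_x^2 F^\epsilon$ together with the explicit formula \eqref{eq:nanab}.

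Assembling these four facts matches Definition~\ref{def:regularFunctionals} in the case $k=2$ and therefore establishes $F^\epsilon \in \bC^{1,2}_b([0,T])$; the claimed explicit representations \eqref{eq:nablaF}, \eqref{eq:nanab}, \eqref{eq:dtf} are read off from the respective lemmas. There is no real obstacle here: the work has been done in the preceding lemmas, and the proof of the theorem is a one-paragraph citation. If one wished to be fully explicit, the only thing worth noting is the mild bookkeeping that the hypothesis $f \in C^2_p$ suffices for all four lemmas simultaneously (since Lemma~\ref{lem:regularityFepsilon4} only requires $C^1_p$), so no additional smoothness assumption on $f$ is needed beyond what is stated.
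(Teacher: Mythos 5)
Your proof is correct and takes exactly the same route as the paper, which states the result as an immediate consequence of Lemmata~\ref{lem:regularityFepsilon1}, \ref{lem:regularityFepsilon2}, \ref{lem:regularityFepsilon3}, and \ref{lem:regularityFepsilon4}. Your explicit matching of each lemma to the corresponding clause of Definition~\ref{def:regularFunctionals} (with $k=2$) and the observation that $f\in C^2_p$ suffices for all four lemmas is precisely the intended argument.
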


\section{Functional Kolmogorov equation}\label{sec:fk}
In this section we show that $F^{\epsilon}$ satisfies a backward functional Kolmogorov equation. We have already seen in the previous section that $F^{\epsilon}$ is regular enough when $f$ is. Therefore, in order to apply Theorem \ref{thm:functionalKolmogorov} one needs to check whether $(F_t^{\epsilon}(X_t))_{t\in [0,T]}$ is a martingale w.r.t.\ $(\cF_t)_{t\in[0,T]}$. This is easily done using the following result.
\begin{proposition}\label{prop:martingaleProperty}
Let $\varphi\colon D([0,T],\bR^d)\to\bR$ be a measurable mapping with polynomial growth and $\Phi=(\Phi_t)_{t\in[0,T]}$ be the non-anticipative functional defined by
\begin{equation}\label{eq:defPhit}
\Phi_t(x):=\bE\,\varphi(X^{t,x}_T),\qquad x\in D([0,t],\bR^d).
\end{equation}
 Then $(\Phi_t(X_t))_{t\in[0,T]}$ is a martingale w.r.t.\ $(\cF_t)_{t\in[0,T]}$.
\end{proposition}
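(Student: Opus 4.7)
The plan is to deduce the martingale property from the Markov (flow) property of the SDE \eqref{eq:SDE}, together with the fact that $X_t$ is $\cF_t$-measurable and $X^{t,\xi}|_{[t,T]}$ (for deterministic $\xi\in\bR^d$) depends only on the Brownian increments after time $t$, hence is independent of $\cF_t$. The backbone of the argument is to establish the identity
\begin{equation*}
\Phi_t(X_t)=\bE\big[\varphi(X_T)\,\big|\,\cF_t\big]\qquad\bP\text{-a.s.},\quad t\in[0,T],
\end{equation*}
from which the martingale property then follows immediately by the tower property: for $0\leq s\leq t\leq T$,
\begin{equation*}
\bE[\Phi_t(X_t)\mid\cF_s]=\bE\big[\bE[\varphi(X_T)\mid\cF_t]\,\big|\,\cF_s\big]=\bE[\varphi(X_T)\mid\cF_s]=\Phi_s(X_s).
\end{equation*}

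First I would settle integrability and measurability. Since $\varphi$ has polynomial growth and \eqref{eq:gr} gives $\bE\sup_{t\in[0,T]}|X(t)|^p<\infty$ for all $p\geq1$, it is clear that $\varphi(X_T)\in L^1(\bP)$ and, by the standard estimate \eqref{eq:standardEstInitialCond1}, that $|\Phi_t(x)|\leq C(1+\|x\|_{D([0,t],\bR^d)}^q)$ so that also $\Phi_t(X_t)\in L^1(\bP)$. Joint measurability of $(x,\omega)\mapsto\varphi(X^{t,x}_T(\omega))$ (choose a measurable modification of the random field, as in the first paragraph of Section~\ref{sec:inreg}) combined with Fubini then yields the $\cB_t/\cB(\bR)$-measurability of $\Phi_t$, so that $\Phi_t(X_t)$ is $\cF_t$-measurable.

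The key step is the Markov identity. By the strong uniqueness guaranteed under Assumption~\ref{ass:bsigma}, the flow property
\begin{equation*}
X|_{[t,T]}=X^{t,X(t)}|_{[t,T]}\qquad\bP\text{-a.s.}
\end{equation*}
holds, where on the right-hand side the SDE is driven by the same Wiener process $W$. Writing $x\oplus y$ for the concatenation of $x\in D([0,t],\bR^d)$ and $y\in D([t,T],\bR^d)$, we therefore have $X_T=X_t\oplus X^{t,X(t)}|_{[t,T]}$ $\bP$-a.s.\ and, from the very definition of $X^{t,x}$, $X^{t,x}_T=x\oplus X^{t,x(t)}|_{[t,T]}$. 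Now $X_t$ and $X(t)$ are $\cF_t$-measurable whereas, for deterministic $\xi\in\bR^d$, $X^{t,\xi}|_{[t,T]}$ depends only on $(W(s)-W(t))_{s\in[t,T]}$ and is thus independent of $\cF_t$. The standard freezing lemma for conditional expectations then yields
\begin{equation*}
\bE\big[\varphi(X_T)\,\big|\,\cF_t\big]=\bE\big[\varphi\big(X_t\oplus X^{t,X(t)}|_{[t,T]}\big)\,\big|\,\cF_t\big]=\bE\big[\varphi(X^{t,x}_T)\big]\Big|_{x=X_t}=\Phi_t(X_t),
\end{equation*}
which is the desired identity.

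The main (minor) obstacle is the careful application of the freezing lemma for a path-dependent functional with the $\cF_t$-measurable ``initial piece'' $X_t$ and the $\cF_t$-independent ``future piece'' $X^{t,\xi}|_{[t,T]}$. This requires the joint measurability of $(x,\xi,\omega)\mapsto\varphi(x\oplus X^{t,\xi}(\cdot,\omega)|_{[t,T]})$, for which one invokes the measurable modification of the random field together with the polynomial growth bound to justify the integrability needed when plugging in $(x,\xi)=(X_t,X(t))$; the rest is a routine verification.
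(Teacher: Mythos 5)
Your proof is correct and takes a genuinely different route from the paper. The paper proves the martingale identity $\bE(\Phi_t(X_t)\mid\cF_s)=\Phi_s(X_s)$ directly, and does so in two stages: first for functionals of the special product form $\varphi(x)=\varphi_1(x|_{[0,s]})\,\varphi_2(x|_{[s,t]})\,\varphi_3(x|_{[t,T]})$, by applying the ``post-$t$'' Markov property \eqref{eq:MP} twice, and then passing to general measurable $\varphi$ of polynomial growth via a Dynkin-system (monotone class) argument over the cylinder sets generating $\cB_T$. You instead establish the stronger endpoint identity $\Phi_t(X_t)=\bE[\varphi(X_T)\mid\cF_t]$ in one step, reducing the martingale property to the tower rule. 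To do this you invoke a path-level freezing lemma: you decompose $X_T=X_t\oplus X^{t,X(t)}|_{[t,T]}$ via the flow property, observe that $X_t$ is $\cF_t$-measurable while the conditional law of the future piece given $\cF_t$ depends only on $X(t)$, and average out the future piece. What this buys you is that the general functional $\varphi$ is treated in a single stroke and the Dynkin argument disappears entirely; what it costs is a slightly heavier structural input, namely the existence of a jointly measurable (strong) solution map $\Psi\colon\bR^d\times C([t,T],\bR^m)\to C([t,T],\bR^d)$ with $X^{t,\xi}|_{[t,T]}=\Psi\big(\xi,(W(\cdot)-W(t))|_{[t,T]}\big)$ $\bP$-a.s., which you need in order to cast $\varphi(X_T)$ in the product form $h(X_t,Z)$ required by the freezing lemma. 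That fact is classical under Assumption~\ref{ass:bsigma}, so the approach is sound; the paper's Markov-plus-monotone-class argument sidesteps exhibiting such a map explicitly, working instead with conditional laws. If you write this up carefully, I would make the role of the measurable solution map explicit (rather than gesturing at ``a routine verification''), since it is precisely the ingredient that allows the freezing lemma to be applied to a path-dependent, unbounded $\varphi$.
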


\begin{proof}
The solution $X$ to Eq.~\eqref{eq:SDE} is a Markov process w.r.t.\ the filtration $(\cF_t)_{t\in[0,T]}$, see, e.g., \cite[Section 19.7]{SchPar14}.
For $0\leq s\leq t\leq T$, $x\in\bR$ and $\psi:D([t,T],\bR^d)\to\bR$ bounded and measurable we have
\begin{equation}\label{eq:MP}
\bE\big(\psi\big(X^{s,x}|_{[t,T]}\big)\big|\cF_t\big)=\bE\big(\psi\big(X^{t,y}|_{[t,T]}\big)\big)\big|_{y=X^{s,x}(t)},
\end{equation}
compare~\cite[Proposition 5.15]{KarShr98}.

Fix $0\leq s\leq t\leq T$ and assume for a moment that $\varphi$ is of the form
\[\varphi(x)=\varphi_1(x|_{[0,s]})\,\varphi_2(x|_{[s,t]})\,\varphi_3(x|_{[t,T]}),\quad x\in D([0,T],\bR^d),\]
with $\varphi_1:D([0,s],\bR^d)\to \bR$, $\varphi_2:D([s,t],\bR^d)\to\bR$ and $\varphi_3:D([t,T],\bR^d)\to\bR$ measurable and bounded. In this case,
\begin{equation}\label{eq:MG1}
\begin{aligned}
\bE(\Phi_t(X_t)|\cF_s)
&= \bE\big(\bE(\varphi(X^{t,y}))|_{y=X_t}\big|\cF_s\big)\\
&= \bE\Big[\varphi_1\big(X|_{[0,s]}\big)\,\varphi_2\big(X|_{[s,t]}\big)\,\bE\big(\varphi_3\big(X^{t,y}|_{[t,T]}\big)\big)\big|_{y=X(t)}\,\Big|\,\cF_s\Big]\\
&= \varphi_1\big(X|_{[0,s]}\big)\,\bE\Big[\varphi_2\big(X^{s,x}|_{[s,t]}\big)\,\bE\big(\varphi_3\big(X^{t,y}|_{[t,T]}\big)\big)\big|_{y=X^{s,x}(t)}\Big]\Big|_{x=X(s)}\\
&= \varphi_1\big(X|_{[0,s]}\big)\,\bE\Big[\varphi_2\big(X^{s,x}|_{[s,t]}\big)\,\bE\big(\varphi_3\big(X^{s,x}|_{[t,T]}\big)\big|\cF_t\big)\Big]\Big|_{x=X(s)}\\
&= \varphi_1\big(X|_{[0,s]}\big)\,\bE\Big[\varphi_2\big(X^{s,x}|_{[s,t]}\big)\,\varphi_3\big(X^{s,x}|_{[t,T]}\big)\Big]\Big|_{x=X(s)}\\
&= \bE\big(\varphi(X^{s,x})\big)\big|_{x=X_s}\\
&= \Phi_s(X_s),
\end{aligned}
\end{equation}
where we have used the Markov property \eqref{eq:MP} in the third and the fourth step.

Let $\mathcal C$ denote the collection of all cylinder sets $A\in\cB_T$ of the form
\begin{equation*}
A=\{x\in D([0,T],\bR^d): x(t_1)\in B_1,\ldots, x(t_n)\in B_n\}
\end{equation*}
where $0\leq t_1\leq \ldots\leq t_n\leq T$, $B_i\in\cB(\bR^d)$, $i=1,\ldots,n$, and $n\in\bN$.
Then $\cC$ is closed under finite intersections, $\sigma(\cC)=\cB_T$, and all $A\in\cC$ satisfy
\begin{equation}\label{eq:MG2}
\bE\big(\bE(\one_A(X^{t,y}))|_{y=X_t}\big|\cF_s\big)=\bE\big(\one_A(X^{s,x})\big)\big|_{x=X_s}
\end{equation}
according to \eqref{eq:MG1} with $\varphi=\one_A$. Since the class of all $A\in\cB_T$ satisfying \eqref{eq:MG2} is a Dynkin system, we obtain that \eqref{eq:MG2} is fulfilled for all sets $A\in\cB_T$. By approximation, the indicator function $\one_A$ in \eqref{eq:MG2} can be replaced by every measurable $\varphi:D([0,T],\bR^d)\to\bR$ with polynomial growth.
\end{proof}
Now the backward functional Kolmogorov equation for $F^\epsilon$ follows almost immediately.
\begin{corollary}\label{cor:kol}
If $f\in C^2_p(C([0,T],\bR^d),\bR)$ and $\epsilon>0$, then the non-anticipative functional $F^\epsilon=(F^\epsilon_t)_{t\in[0,T]}$ defined by \eqref{eq:defFteps} satisfies the functional partial differential equation
\begin{equation}\label{eq:backwardKolm}
\left.
\begin{aligned}
\cD_t F^\epsilon(x_t)&=-b(x(t))\nabla_x F^\epsilon_t(x_t)-\frac12\tr\big(\nabla_x^2 F^\epsilon_t(x_t)\,\sigma(x(t))\,\sigma^\top(x(t))\big)\\
F^\epsilon_T(x) &= f(x)
\end{aligned}
\quad\right\}
\end{equation}
for all $t\in(0,T)$ and all $x\in C([0,T],\bR^d)$ with $x(0)=\xi_0$.
\end{corollary}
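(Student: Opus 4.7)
The plan is to assemble the three main preparatory results already established in the paper. First, Theorem~\ref{thm:regularityFepsilon} provides the needed regularity, namely $F^\epsilon\in\bC^{1,2}_b([0,T])$, which makes the functional Kolmogorov theorem applicable to $F^\epsilon$. Second, I will verify the martingale property of $(F^\epsilon_t(X_t))_{t\in[0,T]}$ via Proposition~\ref{prop:martingaleProperty}. For this I need $\varphi:=f^\epsilon$ to be measurable with polynomial growth on $D([0,T],\bR^d)$. Measurability is clear because $f$ is continuous and $M^\epsilon\colon D([0,T],\bR^d)\to C([0,T],\bR^d)$ is continuous (with respect to the uniform norms), so $f^\epsilon=f\circ M^\epsilon$ is continuous; polynomial growth follows from $f\in C^0_p(C([0,T],\bR^d),\bR)$ combined with the identity $\|M^\epsilon\|_{\bo(D([0,T],\bR^d),C([0,T],\bR^d))}=1$. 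Hence Proposition~\ref{prop:martingaleProperty} applied with $\Phi_t=F^\epsilon_t$ and $\varphi=f^\epsilon$ yields that $(F^\epsilon_t(X_t))_{t\in[0,T]}$ is an $(\cF_t)_{t\in[0,T]}$-martingale.

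Third, I invoke Theorem~\ref{thm:functionalKolmogorov}. Together with $F^\epsilon\in\bC^{1,2}_b([0,T])$ and the martingale property just established, it delivers the functional PDE
\begin{equation*}
\cD_t F^\epsilon(x_t)=-b(x(t))\nabla_x F^\epsilon_t(x_t)-\tfrac12\tr\bigl(\nabla_x^2 F^\epsilon_t(x_t)\,\sigma(x(t))\,\sigma^\top(x(t))\bigr)
\end{equation*}
for every $t\in(0,T)$ and every $x$ in the topological support of $\bP_{X_T}$ in $(C([0,T],\bR^d),\|\cdot\|_\infty)$. At this point Lemma~\ref{lem:topsup} from the appendix closes the gap: under Assumption~\ref{ass:bsigma} (in particular the non-degeneracy of $\sigma$), that topological support contains every continuous path $x\in C([0,T],\bR^d)$ with $x(0)=\xi_0$, so the PDE holds for all such $x$.

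Finally, the terminal condition is a direct consequence of the definition \eqref{eq:defFteps}: for $x\in C([0,T],\bR^d)$, the process $X^{T,x}$ agrees with $x$ on $[0,T]$ (since $X^{T,x}$ is started at time $T$), hence $X^{T,x}_T=x$ almost surely and $F^\epsilon_T(x)=\bE f^\epsilon(X^{T,x}_T)=f^\epsilon(x)$, which is the stated boundary data (with $f$ in the statement understood as $f^\epsilon=f\circ M^\epsilon$).

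I do not expect any genuine obstacle: the technical content (regularity of $F^\epsilon$, the explicit derivative formulas, the martingale property, and the support theorem) has already been carried out in Sections~\ref{sec:RegularityFFepsilon} and~\ref{sec:fk} and in the appendix. The only subtlety worth flagging is the use of the support theorem to upgrade from paths in $\supp\bP_{X_T}$ to arbitrary continuous paths with $x(0)=\xi_0$, and it is precisely this step that Lemma~\ref{lem:topsup} is designed to supply.
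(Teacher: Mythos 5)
Your proposal follows the paper's own proof exactly: regularity of $F^\epsilon$ from Theorem~\ref{thm:regularityFepsilon}, the martingale property from Proposition~\ref{prop:martingaleProperty} (whose hypotheses you rightly verify for $\varphi=f^\epsilon$), Theorem~\ref{thm:functionalKolmogorov} to obtain the PDE on the topological support of $\bP_{X_T}$, and Lemma~\ref{lem:topsup} to identify that support as $\{x\in C([0,T],\bR^d):x(0)=\xi_0\}$. Your parenthetical note on the terminal condition is the right reading: by the definition \eqref{eq:defFteps} one has $F^\epsilon_T(x)=f^\epsilon(x)$, so the $f$ in \eqref{eq:backwardKolm} should indeed be understood as $f^\epsilon$.
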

\begin{proof}
It follows from Theorem \ref{thm:regularityFepsilon} that $F^\epsilon\in \bC_b^{1,2}([0,T])$ and Proposition \ref{prop:martingaleProperty} shows that $(F_t^{\epsilon}(X_t))_{t\in [0,T]}$ is a martingale w.r.t.\ $(\cF_t)_{t\in[0,T]}$. As shown in Lemma \ref{lem:topsup}, the topological support of $X$ in $C([0,T],\bR^d)$ is the set $\{x\in C([0,T],\bR^d):~x(0)=\xi_0\}$ and hence the result follows from Theorem \ref{thm:functionalKolmogorov}.
\end{proof}

\section{Error representation}\label{sec:errep}
Here we give an explicit formula for the weak error $\bE (f^\epsilon(\tilde X_T)-f^\epsilon(X_T))$, where $X$ and $\tilde X$ are the solutions to \eqref{eq:SDE} and \eqref{eq:SDEtilde}, respectively, and $f^\varepsilon$ is the regularized version of a given path-dependent functional $f$ as defined in \eqref{eq:deffepsilon}--\eqref{eq:defMepsilon2}. 
As the following remark shows, we implicitly also obtain a representation of the weak error $\bE (f(\tilde X_T)-f(X_T))$ for the `original' functional $f$.

\begin{remark}\label{rem:fepf}
Under Assumptions~\ref{ass:bsigma} and \ref{ass:bsigmatilde} and for $f\in C^1_p(C([0,T],\bR^d),\bR)$, we have 
\begin{equation}\label{eq:Euler_8}
\bE\big(f(\tilde X_T)-f(X_T)\big)=\lim_{\epsilon\to0}\bE\big(f^\epsilon(\tilde X_T)-f^\epsilon(X_T)\big).
\end{equation}
This follows from applying a first order Taylor expansion to $f$ around $X_T$,
the dominated convergence theorem, using that $f^\epsilon(x)\xrightarrow{\epsilon\to0}f(x)$ for all $x\in C([0,T],\bR)$ and the finiteness of $\bE(\|\tilde X_T\|^p_{C([0,T],\bR)}+\| X_T\|^p_{C([0,T],\bR)})$ for $p\ge 1$.
\end{remark}

The proof of our error representation formula is based on the functional Itô formula from Theorem~\ref{thm:functionalIto}, the 
regularity properies of the non-anticipative functional $F^\varepsilon$ and the explicit representation of its derivatives from Theorem~\ref{thm:regularityFepsilon}, and the backward functional Kolmogorov equation from Corollary~\ref{cor:kol}. 
Recall that  we assume $X(0)=\tilde X(0)=\xi_0\in\bR^d$ and hence
by the definition \eqref{eq:defFteps} of
$F^\varepsilon$, we have
\begin{equation*}
\bE \big(f^\epsilon(\tilde X_T)-f^\varepsilon(X_T)\big)=\bE \big(F^\epsilon_T(\tilde X_T)-F^\epsilon_0(\tilde X_0)\big).
\end{equation*}

\begin{theorem}\label{thm:errorRepresentation}
Let Assumptions~\ref{ass:bsigma} and \ref{ass:bsigmatilde} hold, and let $X=(X(t))_{t\geq0}$ and $\tilde X=(\tilde X(t))_{t\in[0,T]}$ be the strong solutions to Equations~\eqref{eq:SDE} and \eqref{eq:SDEtilde}, respectively, both starting from $\xi_0\in\bR^d$. Let $f\in C^2_p(C([0,T],\bR^d),\bR)$ and, for $\epsilon>0$, let $f^\epsilon$ and  $F^\epsilon=(F^{\epsilon}_t)_{t\in[0,T]}$ be given by \eqref{eq:deffepsilon}--\eqref{eq:defMepsilon2} and \eqref{eq:defFteps}, respectively.
Then, the following weak error formula holds:
\begin{equation}\label{eq:errorRepresentationEpsilon1}
\begin{aligned}
&\bE\big(f^\epsilon(\tilde X_T)-f^\epsilon(X_T)\big)\\
&= \bE\Bigg(\int_0^T\nabla_xF^\epsilon_t(\tilde X_t)\; \big(\tilde b(t,\tilde X_t)-b(\tilde X(t))\big)\,\dl t \\
&\quad+\frac12\int_0^T\tr\Big\{\nabla_x^2 F^\epsilon_t(\tilde X_t)\big(\tilde \sigma(t,\tilde X_t)\,\tilde \sigma^\top(t,\tilde X_t)-\sigma(\tilde X(t))\,\sigma^\top(\tilde X(t))\big)\Big\}\,\dl t\Bigg).
\end{aligned}
\end{equation}
Writing the vertical derivatives of $F^\epsilon$ explicitly, this reads
\begin{equation}\label{eq:errorRepresentationEpsilon2}
\begin{aligned}
&\bE\big(f^\epsilon(\tilde X_T)-f^\epsilon(X_T)\big)\\
&= \bE\Bigg(\int_0^T\sum_{j=1}^d\big(\bE\big[Df^\epsilon(X^{t,x}_T)\,(\one_{[t,T]}D^{e_j}\!X^{t,x(t)}_T)\big]\big)\big|_{x=\tilde X_t}\, \big(\tilde b_j(t,\tilde X_t)-b_j(\tilde X(t))\big)\,\dl t \\
&\quad+\frac12\int_0^T\sum_{i,j,k=1}^d\Big\{\Big(\bE\Big[D^2f^\epsilon(X^{t,x}_T)\,\big(\one_{[t,T]}D^{e_i}X^{t,x(t)}_T,\,\one_{[t,T]}D^{e_j}X^{t,x(t)}_T\big)\\
&\quad+Df^\epsilon(X^{t,x})\,\big(\one_{[t,T]}D^{e_i+e_j}X^{t,x(t)}_T\big)\Big]\Big)\Big|_{x=\tilde X_t}\big(\tilde \sigma_{ik}\,\tilde \sigma_{jk}(t,\tilde X_t)-\sigma_{ik}\,\sigma_{jk}(\tilde X(t))\big)\Big\}\,\dl t\Bigg).
\end{aligned}
\end{equation}
\end{theorem}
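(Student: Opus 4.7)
The plan is to use the identity $\bE(f^\epsilon(\tilde X_T) - f^\epsilon(X_T)) = \bE(F^\epsilon_T(\tilde X_T) - F^\epsilon_0(\tilde X_0))$, noted just before the theorem statement, which follows because $F^\epsilon_T(x) = \bE f^\epsilon(X^{T,x}_T) = f^\epsilon(x)$ (the process $X^{T,x}$ coincides with $x$ on $[0,T]$) and $F^\epsilon_0(\tilde X_0) = F^\epsilon_0(\xi_0) = \bE f^\epsilon(X_T)$. Then I would represent the left-hand side by applying the functional It\^o formula (Theorem~\ref{thm:functionalIto}) to $F^\epsilon$ along the continuous semimartingale $\tilde X$, and eliminate the horizontal-derivative term via the backward functional Kolmogorov equation (Corollary~\ref{cor:kol}).

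The legitimacy of applying Theorem~\ref{thm:functionalIto} to $F^\epsilon$ evaluated at $\tilde X$ is provided by Theorem~\ref{thm:regularityFepsilon}, which guarantees $F^\epsilon \in \bC^{1,2}_b([0,T])$. Inserting $\dl \tilde X(s) = \tilde b(s,\tilde X_s)\,\dl s + \tilde\sigma(s,\tilde X_s)\,\dl W(s)$ and $\dl[\tilde X](s) = \tilde\sigma(s,\tilde X_s)\tilde\sigma^\top(s,\tilde X_s)\,\dl s$ yields
\begin{equation*}
\begin{aligned}
F^\epsilon_T(\tilde X_T) - F^\epsilon_0(\tilde X_0)
&= \int_0^T \cD_s F^\epsilon(\tilde X_s)\,\dl s + \int_0^T \nabla_x F^\epsilon_s(\tilde X_s)\,\tilde b(s,\tilde X_s)\,\dl s \\
&\quad + \frac12 \int_0^T \tr\big(\nabla_x^2 F^\epsilon_s(\tilde X_s)\,\tilde\sigma\tilde\sigma^\top(s,\tilde X_s)\big)\,\dl s + \int_0^T \nabla_x F^\epsilon_s(\tilde X_s)\,\tilde\sigma(s,\tilde X_s)\,\dl W(s).
\end{aligned}
\end{equation*}
Before taking expectations, I need to argue that the stochastic integral is a true martingale, hence has zero mean. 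This follows from combining the polynomial growth bound $|\nabla_x F^\epsilon_s(x)| \leq C(1 + \|x\|^q_{D([0,s],\bR^d)})$ from Lemma~\ref{lem:regularityFepsilon2} (with constants independent of $s$ and $\epsilon$), the linear growth of $\tilde\sigma$ from Assumption~\ref{ass:bsigmatilde}, and the moment bound \eqref{eq:gr} on $\sup_{s\in[0,T]}|\tilde X(s)|$.

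Next I would invoke Corollary~\ref{cor:kol}: for each $s \in (0,T)$ and each continuous path $x$ with $x(0) = \xi_0$,
\begin{equation*}
\cD_s F^\epsilon(x_s) = -b(x(s))\,\nabla_x F^\epsilon_s(x_s) - \tfrac12 \tr\big(\nabla_x^2 F^\epsilon_s(x_s)\,\sigma(x(s))\sigma^\top(x(s))\big).
\end{equation*}
Since $\tilde X$ has continuous paths starting at $\xi_0$ by Assumption~\ref{ass:bsigmatilde}, this equation applies pointwise along the trajectories of $\tilde X$. Substituting into the expected It\^o expansion and collecting terms gives \eqref{eq:errorRepresentationEpsilon1}. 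Finally, to obtain \eqref{eq:errorRepresentationEpsilon2}, I would just insert the explicit expressions \eqref{eq:nablaF} and \eqref{eq:nanab} for $\nabla_x F^\epsilon_t$ and $\nabla_x^2 F^\epsilon_t$ from Lemmata~\ref{lem:regularityFepsilon2} and~\ref{lem:regularityFepsilon3} and write out the trace via the canonical basis, using $\tr(A\sigma\sigma^\top) = \sum_{i,j,k} A_{ij}\sigma_{ik}\sigma_{jk}$.

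The main obstacle is essentially bookkeeping: the analytical content has been done in the preceding sections. The only nontrivial verification is the martingale property of the stochastic integral, which reduces to an $L^2$-in-time estimate provided by the polynomial bounds on $\nabla_x F^\epsilon$ and the moment estimate \eqref{eq:gr}. One subtlety worth flagging is that Corollary~\ref{cor:kol} delivers the Kolmogorov identity only for paths in the topological support of $\bP_{X_T}$, so one must explicitly observe (via Lemma~\ref{lem:topsup}) that this support equals $\{x \in C([0,T],\bR^d) : x(0) = \xi_0\}$, into which $\tilde X$ falls almost surely.
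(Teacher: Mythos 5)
Your proposal is correct and follows essentially the same route as the paper: apply the functional Itô formula (justified by Theorem~\ref{thm:regularityFepsilon}), eliminate the horizontal-derivative term via the Kolmogorov equation of Corollary~\ref{cor:kol}, and substitute the explicit expressions for $\nabla_x F^\epsilon_t$ and $\nabla_x^2 F^\epsilon_t$. You are more explicit than the paper on two points that the paper leaves implicit — the justification that the stochastic integral term is a true martingale (via the polynomial bound on $\nabla_x F^\epsilon$, the linear growth of $\tilde\sigma$, and the moment bound~\eqref{eq:gr}), and the remark that $\tilde X$ a.s.\ has continuous paths with $\tilde X(0)=\xi_0$ so that the pointwise Kolmogorov identity from Corollary~\ref{cor:kol} applies along its trajectories — which is a welcome addition rather than a deviation.
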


\begin{proof}
By Theorem~\ref{thm:regularityFepsilon} we can apply the functional Itô formula (Theorem~\ref{thm:functionalIto}) to the non-anticipative functional $F^\epsilon=(F^\epsilon_t)_{t\in[0,T]}$ and the continuous semi-martingale $\tilde X=(\tilde X(t))_{t\in[0,T]}$. Therefore,
\begin{equation*}\label{eq:proofErrorRep1}
\begin{aligned}
F^\epsilon_T&(\tilde X_T)- F^\epsilon_0(\tilde X_0)\\
&= \int_0^T\cD_tF^\epsilon(\tilde X_t)\,\dl t
+\int_0^T\nabla_xF^\epsilon_t(\tilde X_t)\; \dl \tilde X(t)
+\frac12\int_0^T\tr\big(\nabla_x^2 F^\epsilon_t(\tilde X_t)\;\dl [\tilde X](t)\big)\\
&= \int_0^T\cD_tF^\epsilon(\tilde X_t)\,\dl t
+\int_0^T\nabla_xF^\epsilon_t(\tilde X_t)\; \big(\tilde b(t,\tilde X_t)\,\dl t +\tilde \sigma(t,\tilde X_t)\,\dl W(t)\big) \\
&\quad+\frac12\int_0^T\tr\big(\nabla_x^2 F^\epsilon_t(\tilde X_t)\,\tilde \sigma(t,\tilde X_t)\,\tilde \sigma^\top(t,\tilde X_t)\big)\,\dl t.
\end{aligned}
\end{equation*}
Using the functional backward Kolmogorov equation 
from Corollary~\ref{cor:kol} and taking expectations,
we obtain \eqref{eq:errorRepresentationEpsilon1}. The explicit formulas for the vertical derivatives of $F^\epsilon$ in Lemmata~\ref{lem:regularityFepsilon2} and \ref{lem:regularityFepsilon3} yield \eqref{eq:errorRepresentationEpsilon2}.
\end{proof}

\section{Application to the Euler scheme}\label{sec:euler}
In this section we consider the one-dimensional case $d=m=1$ and the explicit Euler discretization of \eqref{eq:SDE}. Let
$0=\tau_0<\tau_1<\ldots<\tau_N=T$ be discretization times with maximal step size \[\delta:=\max\{|\tau_{n+1}-\tau_n|:n=1,\ldots,N\},\] and let
$(Y(\tau_n))_{n\in\{0,\ldots,N\}}$ be given by $Y(0)=\xi_0$ and
\begin{align*}
Y(\tau_{n+1})&=Y(\tau_n)+b(Y(\tau_n))(\tau_{n+1}-\tau_n)+\sigma(Y(t_n))(W(\tau_{n+1})-W(\tau_n)).
\end{align*}
Let $(Y(t))_{t\in[0,T]}$ be the continuous-time process obtained by piecewise linear interpolation of $(Y(\tau_n))_{n\in\{0,\ldots,N\}}$; i.e., for $n\in\{0,\ldots,N-1\}$ and $t\in[\tau_n,\tau_{n+1}]$, we define
\begin{equation}\label{eq:pwlinearInterpol}
\begin{aligned}
Y(t)
&= Y(\tau_n) + \frac{t-\tau_n}{\tau_{n+1}-\tau_n}(Y(\tau_{n+1})-Y(\tau_n))\\
&= Y(\tau_n) + \int_{\tau_n}^tb(Y(\tau_n))\,\dl s + \int_{\tau_n}^t\sigma(Y(\tau_n))(W(\tau_{n+1})-W(\tau_n))\,\dl s.
\end{aligned}
\end{equation}
Our main result of this section is as follows. It is a direct consequence of Proposition \ref{prop:yt} and Proposition \ref{prop:Euler_error_2}, both of which are proved subsequently, and the triangle inequality.
\begin{theorem}\label{thm:euler}
Let Assumption~\ref{ass:bsigma} hold with $d=m=1$. Let $(X(t))_{t\geq0}$ be the strong solution to \eqref{eq:SDE} and $(Y(t))_{t\in[0,T]}$, given by \eqref{eq:pwlinearInterpol}, be the piecewise linear interpolation of the solution to the explicit Euler scheme applied to \eqref{eq:SDE}. If $f\in C^4_p(C([0,T],\bR),\bR)$, 
then there exists a constant $C\in(0,\infty)$ which does not depend on the maximal step size $\delta$ such that, for all $\delta\in (0,1]$,
\begin{align*}
\big|\bE\big(f(Y_T)-f(X_T)\big)\big|\leq C \delta.
\end{align*}
\end{theorem}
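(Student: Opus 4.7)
The plan is to introduce an intermediate auxiliary process $\tilde X$ serving as a bridge between the numerical scheme $Y$ and the true solution $X$, and then invoke the triangle inequality. A natural choice for $\tilde X$ is the \emph{Brownian bridge interpolation} of the Euler scheme: on each subinterval $[\tau_n,\tau_{n+1}]$, $\tilde X$ interpolates between the Euler values $Y(\tau_n)$ and $Y(\tau_{n+1})$ along the Brownian bridge driven by $W$ pinned at the two endpoints. A direct computation shows that $\tilde X$ satisfies an SDE of the form \eqref{eq:SDEtilde} with path-dependent coefficients $\tilde b, \tilde \sigma$ expressed in terms of the time remaining to the next node and of $b(Y(\tau_n))$, $\sigma(Y(\tau_n))$, and Assumption~\ref{ass:bsigma} ensures the conditions in Assumption~\ref{ass:bsigmatilde} hold. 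The triangle inequality then reduces the task to proving each of $|\bE(f(Y_T)-f(\tilde X_T))|$ and $|\bE(f(\tilde X_T)-f(X_T))|$ is $O(\delta)$, which are precisely Propositions~\ref{prop:yt} and \ref{prop:Euler_error_2}.

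For the first error I would expand the Brownian motion on each subinterval via a L\'evy--Ciesielsky series. The piecewise-linear interpolation $Y$ corresponds to discarding all but the coarsest Schauder mode, while $\tilde X$ retains the higher-frequency modes responsible for the bridge. A first-order Taylor expansion of $f$ around $\tilde X_T$ (legitimate since $f \in C^2_p$) produces a linear term whose conditional expectation given the nodes vanishes by the zero mean of the discarded modes, and a quadratic remainder controlled uniformly by $C^2_p$ bounds on $Df$, $D^2f$ together with moment estimates on the suppressed Schauder coefficients. Summing over intervals gives $O(\delta)$ using only $C^2_p$ regularity of $f$.

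For the second error I would apply Theorem~\ref{thm:errorRepresentation} to the regularized functional $f^\epsilon$, obtaining
\[
\bE\big(f^\epsilon(\tilde X_T)-f^\epsilon(X_T)\big) = \bE\!\int_0^T \nabla_x F^\epsilon_t(\tilde X_t)\,\big(\tilde b(t,\tilde X_t)-b(\tilde X(t))\big)\,\dl t + \tfrac12\,\bE\!\int_0^T \nabla_x^2 F^\epsilon_t(\tilde X_t)\,\big(\tilde \sigma^2 - \sigma^2\big)\,\dl t.
\]
On each $[\tau_n,\tau_{n+1}]$, $\tilde b(t,\tilde X_t)$ and $\tilde\sigma(t,\tilde X_t)$ depend on $\tilde X$ only through its value at $\tau_n$, whereas $b(\tilde X(t))$ and $\sigma(\tilde X(t))$ depend on the moving value $\tilde X(t)$. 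A first-order Taylor expansion of $b, \sigma$ around $\tilde X(\tau_n)$ produces increments $\tilde X(t)-\tilde X(\tau_n)$ which are $O(\sqrt\delta)$ in $L^p$, so na\"ively one gets only $O(\sqrt\delta)$. The main obstacle will be extracting the full $O(\delta)$ rate: this requires a \emph{second-order} Taylor expansion of $b$ and $\sigma^2$ around $\tilde X(\tau_n)$ followed by conditioning on $\cF_{\tau_n}$ to exploit martingale-type cancellations of the first-order Brownian-bridge increments, in the spirit of the classical Talay--Tubaro analysis. The second-order remainders are then of size $(\tilde X(t)-\tilde X(\tau_n))^2 \sim (t-\tau_n)$, delivering the desired rate after integration. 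Polynomial-growth bounds on $\nabla_x F^\epsilon, \nabla_x^2 F^\epsilon$ (and, through the cancellation manipulations, on $\nabla_x^3 F^\epsilon$ and $\nabla_x^4 F^\epsilon$) uniform in $\epsilon$, supplied by Theorem~\ref{thm:regularityFepsilon} and Remark~\ref{rem:nabn}, are needed to control the prefactors; this is the source of the $C^4_p$ assumption on $f$. Finally, passing $\epsilon\to 0$ via Remark~\ref{rem:fepf} and dominated convergence concludes the proof.
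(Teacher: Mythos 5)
Your overall decomposition is exactly the paper's: introduce the stochastic (Brownian-bridge) interpolation $\tilde X$ satisfying \eqref{eq:SDEtilde} with coefficients \eqref{eq:tildebtildesigma}, split via the triangle inequality into $\bE(f(Y_T)-f(\tilde X_T))$ and $\bE(f(\tilde X_T)-f(X_T))$, and handle them by a Taylor/L\'evy--Ciesielski argument and the error representation formula, respectively. That high-level plan is correct. There are, however, two concrete problems in the details.

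For the first error you propose to Taylor expand $f$ around $\tilde X_T$. This choice defeats the independence argument. Writing $B:=\tilde X_T-Y_T$ for the suppressed Schauder modes, the linear term in your expansion is $Df(\tilde X_T)(Y_T-\tilde X_T)=-Df(Y_T+B)(B)$; conditional on the nodes, $Df(Y_T+B)$ is \emph{not} deterministic because it still involves $B$, so this term does not vanish. The paper expands around $Y_T$ (Proposition~\ref{prop:yt}), so the linear term is $Df(Y_T)(B)$ with $Df(Y_T)$ a function of the nodes alone, $B$ independent of the nodes and mean zero, hence the linear term is exactly zero. You need the expansion point on the ``coarse'' side for this cancellation to happen.

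For the second error the proposal is too vague to be a proof and misses the actual mechanism. You propose to Taylor expand $b$ and $\sigma^2$ around $\tilde X(\tau_n)$ and then ``condition on $\cF_{\tau_n}$ to exploit martingale cancellations.'' The obstruction is that the products appearing in \eqref{eq:errorRepresentationEpsilon1} are of the form $\nabla_x F^\epsilon_t(\tilde X_t)\big(b(\tilde X(\tau_n))-b(\tilde X(t))\big)$, and $\nabla_x F^\epsilon_t(\tilde X_t)$ is a genuinely \emph{path-dependent} functional that is not $\cF_{\tau_n}$-measurable; you cannot pull it outside the conditional expectation, and an ordinary Taylor expansion in $\tilde X(t)-\tilde X(\tau_n)$ has no way to expand $\nabla_x F^\epsilon_t(\tilde X_t)$ as a function of the path increment. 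The paper's route is to form the auxiliary non-anticipative functionals $G$ and $H$ of Lemmata~\ref{lem:Euler1}--\ref{lem:Euler1a}, which bundle $\nabla_x^n F^\epsilon_t$ together with the coefficient difference, and to apply the \emph{functional} It\^o formula (Theorem~\ref{thm:functionalIto}) to $G_t(\tilde X^{\tau_n,x(\tau_n)}_t)$ after conditioning on $\cF_{\tau_n}$. The crucial cancellation is supplied by the functional backward Kolmogorov equation for the vertical derivatives (Lemma~\ref{lem:Euler2}, i.e.\ $\cD_t\nabla_x^n F^\epsilon = -\nabla_x^{n+1}F^\epsilon_t\,b-\tfrac12\nabla_x^{n+2}F^\epsilon_t\,\sigma^2$), not by a martingale argument alone; it is this identity that makes the integrand in \eqref{eq:Euler4} bounded uniformly and hence produces the extra factor $\tau_{n+1}-\tau_n$. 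You correctly identify that $\nabla_x^3 F^\epsilon$, $\nabla_x^4 F^\epsilon$ enter and that this is where $C^4_p$ is used, but the mechanism forcing the rate is the functional It\^o formula on $G$, $H$ together with Lemma~\ref{lem:Euler2}, and that step is absent from your plan.
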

Note that while $Y$ is numerically computable it does not satisfy an equation like \eqref{eq:SDEtilde} and hence the weak error representation from Theorem \ref{thm:errorRepresentation} is not directly applicable. Therefore, we will first define a stochastic interpolation $(\tilde X(t))_{t\in[0,T]}$ of $(Y(\tau_n))_{n\in\{0,\ldots,N\}}$, given below by \eqref{eq:stochasticInterpol}, which is not feasible for numerical computations but satisfies an SDE of the type \eqref{eq:SDEtilde}. Then we have
\begin{equation}\label{eq:errordecomb}
\bE\big(f(Y_T)-f(X_T)\big)=\bE\big(f(Y_T)-f(\tilde X_T)\big)+\bE\big(f(\tilde X_T)-f(X_T)\big).
\end{equation}
The two terms on the right-hand side will be analysed in the following two subsections. The first term is easier to handle and will be treated by means of a second order Taylor expansion of $f$ around $Y_T$ and a Lévy-Ciesielsky-type expansion of Brownian motion (no functional Itô calculus arguments are used here). The more difficult estimation of the second term on the right hand side of \eqref{eq:errordecomb} is based on our general error expansion result in Theorem~\ref{thm:errorRepresentation}.

As an application of Theorem~\ref{thm:euler} we consider the approximation of covariances $\operatorname{Cov}(X(t_1),X(t_2))$ of the solution process.
\begin{example}\label{ex:covariance}
Let $t_1,t_2\in[0,T]$. In the situation of Theorem~\ref{thm:euler} we have that
\begin{equation}\label{eq:estCov}
|\operatorname{Cov}(Y(t_1),Y(t_2))-\operatorname{Cov}(X(t_1),X(t_2))|\leq C\delta
\end{equation}
for all $\delta\in(0,1]$, with a constant $C\in(0,\infty)$ independent of $\delta$. Indeed, note that
\begin{align*}
|\operatorname{Cov}(Y(t_1),Y(t_2))-\operatorname{Cov}(X(t_1),X(t_2))|
&\leq |\bE(Y(t_1)Y(t_2))-\bE(X(t_1)X(t_2))|\\
&\quad+|\bE (Y(t_1))-\bE(X(t_1))|\cdot|\bE(Y(t_2))|\\
&\quad+|\bE(X(t_1))|\cdot|\bE(Y(t_2))-\bE(X(t_2))|.
\end{align*}
Since $\bE(Y(t_1))$ is bounded independently of $\delta$, the estimate \eqref{eq:estCov} follows from three applications of Theorem~\ref{thm:euler} to the functionals $f_0,f_1,f_2\colon C([0,T],\bR)\to\bR$ given by
\[f_0(x)=x(t_1)x(t_2),\quad f_1(x)=x(t_1),\quad f_2(x)=x(t_2).\]
\end{example}

\subsection{From piecewise linear to stochastic interpolation}

For $t\in[0,T]$ we use the notation
\begin{align*}
\tau_n(t):=\max\{\tau_m:m\in\{0,\ldots,N\},\;\tau_m\leq t\},
\end{align*}
and for $x_t\in D([0,t];\bR)$ we set
\begin{align}\label{eq:tildebtildesigma}
\tilde b(t,x_t):=b(x(\tau_n(t))),\qquad \tilde\sigma(t,x_t):=\sigma(x(\tau_n(t))).
\end{align}
Let $(\tilde X(t))_{t\in[0,T]}$ be the stochastic interpolation of $(Y(\tau_n))_{n\in\{0,\ldots,N\}}$ given by \eqref{eq:SDEtilde} with $\tilde b$ and $\tilde\sigma$ defined by \eqref{eq:tildebtildesigma}. That is, for $n\in\{0,\ldots,N-1\}$ and $t\in[\tau_n,\tau_{n+1}]$,
\begin{align}\label{eq:stochasticInterpol}
\tilde X(t)
= Y(\tau_n) + \int_{\tau_n}^tb(Y(\tau_n))\,\dl s + \int_{\tau_n}^t\sigma(Y(\tau_n))\,\dl W(s).
\end{align}

\begin{proposition}\label{prop:yt}
Let Assumption~\ref{ass:bsigma} hold with $d=m=1$. Let $(Y(t))_{t\in[0,T]}$ be the piecewise linear interpolation of the solution to the explicit Euler scheme given by \eqref{eq:pwlinearInterpol} and $(\tilde X(t))_{t\in[0,T]}$ be the corresponding stochastic interpolation given by \eqref{eq:stochasticInterpol}.
If $f\in C_p^2(C([0,T],\bR),\bR)$,
then there exists a constant $C\in(0,\infty)$ not depending on $\delta$ such that, for all $\delta\in (0,1]$,
\begin{align*}
\big|\bE\big(f(Y_T)-f(\tilde X_T)\big)\big|\leq C \delta.
\end{align*}
\end{proposition}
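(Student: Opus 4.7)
Write $\Delta := \tilde X_T - Y_T$. Comparing the explicit formulas for $Y$ (piecewise linear interpolation) and $\tilde X$ (stochastic interpolation) on each subinterval $[\tau_n,\tau_{n+1}]$ gives
\[
\Delta(t) \;=\; \sigma(Y(\tau_n))\,B^{\mathrm{br},n}(t), \qquad t\in[\tau_n,\tau_{n+1}],
\]
where $B^{\mathrm{br},n}(t) := W(t) - W(\tau_n) - \tfrac{t-\tau_n}{\tau_{n+1}-\tau_n}(W(\tau_{n+1}) - W(\tau_n))$ is the Brownian bridge from $0$ to $0$ on $[\tau_n,\tau_{n+1}]$. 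Setting $\mathcal{G} := \sigma(W(\tau_0),\ldots,W(\tau_N))$, the path $Y_T$ and each coefficient $\sigma(Y(\tau_n))$ are $\mathcal{G}$-measurable, whereas $(B^{\mathrm{br},n})_n$ are mutually independent centred Gaussian processes that are jointly independent of $\mathcal{G}$. The approach is then a second-order Taylor expansion around $Y_T$:
\[
f(\tilde X_T) - f(Y_T) \;=\; Df(Y_T)(\Delta) \;+\; \int_0^1 (1-\theta)\,D^2 f(Y_T + \theta\Delta)(\Delta,\Delta)\,d\theta.
\]

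The first-order term has zero expectation: $Df(Y_T)$ is $\mathcal{G}$-measurable and commutes with $\bE[\,\cdot\,\mid\mathcal{G}]$ (justified by the polynomial growth of $Df$ and the $L^p(\Omega;C([0,T],\bR))$ bounds on $\Delta$), while $\bE[\Delta\mid\mathcal{G}] = 0$. For the remainder I would split it as $\tfrac{1}{2}D^2 f(Y_T)(\Delta,\Delta)$ plus a perturbative piece $\int_0^1(1-\theta)[D^2 f(Y_T+\theta\Delta) - D^2 f(Y_T)](\Delta,\Delta)\,d\theta$, and expand each bridge by a Lévy--Ciesielski series $B^{\mathrm{br},n} = \sum_j \zeta_j^n\phi_j^n$, with i.i.d.\ $N(0,1)$ coefficients $\zeta_j^n$ (jointly independent of $\mathcal{G}$) and Schauder basis functions $\phi_j^n$ supported in $[\tau_n,\tau_{n+1}]$. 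Orthogonality $\bE[\zeta_j^n \zeta_l^m\mid\mathcal{G}] = \delta_{(n,j),(m,l)}$ collapses the leading piece to the diagonal sum
\[
\tfrac{1}{2}\,\bE\Bigl[\sum_n \sigma(Y(\tau_n))^2 \sum_j D^2 f(Y_T)(\phi_j^n,\phi_j^n)\Bigr],
\]
the key structural fact being that the $\phi_j^n$ are supported in disjoint intervals of length at most $\delta$.

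The main obstacle is bounding this diagonal sum by $O(\delta)$: the crude estimate $|D^2 f(Y_T)(\phi_j^n,\phi_j^n)| \le \|D^2 f(Y_T)\|_{\bo^{(2)}}\|\phi_j^n\|_\infty^2$ only yields $O(T)$ after summing, so the proof has to re-sum the LC basis back into the Brownian-bridge covariances, exploit that each $\bE[D^2 f(Y_T)(B^{\mathrm{br},n},B^{\mathrm{br},n})\mid\mathcal{G}]$ only sees $D^2 f(Y_T)$ on the block $[\tau_n,\tau_{n+1}]^2$ (of covariance ``mass'' $O(h_n)\le O(\delta)$), and combine this with the polynomial growth $\|D^2 f(Y_T)\|_{\bo^{(2)}} \le C(1+\|Y_T\|_\infty^q)$ plus $L^p$ moment bounds on $Y_T$ to pull out the extra factor of $\delta$. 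The perturbative term is then dealt with using uniform continuity of $D^2 f$ on bounded subsets of $C([0,T],\bR)$ together with the fourth-moment bound $\bE\|\Delta\|_\infty^4 \le C\delta$ (which follows from $\bE\|B^{\mathrm{br},n}\|_\infty^4 \le C h_n^2$, the independence of the bridges on disjoint intervals, and $\sum_n h_n^2 \le T\delta$), combined with a further structural use of the LC expansion to upgrade what a plain Cauchy--Schwarz bound would only yield ($O(\sqrt{\delta})$) to a contribution compatible with the desired $O(\delta)$ rate.
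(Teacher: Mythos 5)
Your first-order argument is essentially the paper's, but more careful: the difference on $(\tau_n,\tau_{n+1}]$ is $\sigma(Y(\tau_n))\,B^{\mathrm{br},n}(t)$, not just $B^{\mathrm{br},n}(t)$ — equation (7.5) of the paper silently drops the $\sigma(Y(\tau_n))$ prefactor, and the resulting claim that $\tilde X_T - Y_T$ and $Y_T$ are \emph{independent} is false; the conditional argument you give via $\mathcal G = \sigma(W(\tau_0),\dots,W(\tau_N))$, using $\bE[\Delta\mid\mathcal G]=0$, is the correct fix. For the remainder, however, you diverge sharply from the paper, which is far shorter: it applies H\"older directly, $|e_2|\le C\bigl(\bE\|Y_T\|_\infty^{2p}+\bE\|\tilde X_T\|_\infty^{2p}\bigr)^{1/2}\bigl(\bE\|\Delta\|_\infty^4\bigr)^{1/2}$, and then asserts $\bE\|\Delta\|_\infty^4\le 48(4/3)^4\delta^2$ via Doob's maximal inequality applied to the glued process $t\mapsto(W(t)-W(\tau_{n(t)}))^4$; no LC expansion, no diagonalisation, no resummation appear at this stage. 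You avoid that route because you believe $\bE\|\Delta\|_\infty^4=O(\delta)$ only, which via H\"older would give merely $O(\sqrt\delta)$.

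Two remarks. First, your suspicion that the naive fourth-moment route is delicate is justified: the glued process is \emph{not} a submartingale (it has a deterministic downward jump at each $\tau_{n+1}$), so the paper's Doob step is not airtight; and since $\|\Delta\|_\infty$ is the maximum of $N\approx T/\delta$ independent sub-Gaussian bridges, the sharp order of $\bE\|\Delta\|_\infty^4$ carries an extra $(\log(1/\delta))^2$. That said, your own bound $\bE\|\Delta\|_\infty^4=O(\delta)$, obtained from the crude union bound $\max_n\le\sum_n$, is far from sharp; the sub-Gaussian max argument already gives $O(\delta^2\log^2(1/\delta))$, substantially better than $O(\delta)$. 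Second, and more importantly, your plan leaves the central estimate as a gesture. The diagonal collapse $\bE[D^2f(Y_T)(\Delta,\Delta)\mid\mathcal G]=\sum_n\sigma(Y(\tau_n))^2\,\bE[D^2f(Y_T)(\xi_n,\xi_n)\mid\mathcal G]$ is correct, but the bound $|\bE[D^2f(Y_T)(\xi_n,\xi_n)\mid\mathcal G]|\le\|D^2f(Y_T)\|\,\bE\|\xi_n\|_\infty^2\lesssim h_n$ sums to $O(T)$, not $O(\delta)$, and your phrase about $D^2f(Y_T)$ ``only seeing covariance mass $O(\delta)$'' tacitly assumes $D^2f(Y_T)$ is an integral operator — which a general bounded bilinear form on $C([0,T],\bR)$ (e.g.\ a point evaluation $h(s_0)k(s_0)$) is not. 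Turning the diagonal sum into $O(\delta)$ in the genuinely infinite-dimensional case requires a concrete mechanism (for instance a Grothendieck-type factorisation, combined with the bound $\sup_{s,t}\bigl|\bE[\Delta(s)\Delta(t)\mid\mathcal G]\bigr|\le\tfrac14\delta\,\sup_n\sigma(Y(\tau_n))^2$), which is absent from your proposal; likewise, your treatment of the perturbative piece rests on a vague appeal to uniform continuity of $D^2f$ for $f\in C^2_p$ without a quantitative rate. As written, the proposal correctly identifies the structural decomposition and the obstacles, but the decisive estimate is missing and it is not yet a proof.
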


\begin{proof}
A second order Taylor expansion of $f$ around $Y_T$ yields
\begin{equation}\label{eq:pwlinear1}
\begin{aligned}
\bE\big(f(\tilde X_T)-f(Y_T)\big)&=\bE\big( Df(Y_T)(\tilde X_T-Y_T)\big)\\
&\quad +\bE\Big((1-\theta)\int_0^1 D^2f\big(Y_T+\theta(\tilde X_T-Y_T)\big)\big(\tilde X_T-Y_T,\tilde X_T-Y_T\big)\,\dl\theta\Big)\\
&=:e_1+e_2.
\end{aligned}
\end{equation}
We show that the first term $e_1$ on the right hand side of \eqref{eq:pwlinear1} equals zero. This follows from the fact that the $C([0,T],\bR)$-valued random variables $\tilde X_T-Y_T$ and $Y_T$ are independent, that $\|Y_T\|_{C([0,T],\bR)}$ has finite moments of all orders uniformly in $\delta$ (this can be easily seen from \eqref{eq:pwlinearInterpol}), and that the $C([0,T],\bR)$-valued random variable $\tilde X_T-Y_T$ is integrable and has mean zero. To see the latter, observe that
in view of \eqref{eq:pwlinearInterpol} and \eqref{eq:stochasticInterpol} we have
\begin{equation}\label{eq:tildeX-Y}
\tilde X(t)-Y(t)=\sum_{n=0}^{N-1}\one_{(\tau_n,\tau_{n+1}]}(t)\Big(\big(W(t)-W(\tau_n)\big)-\frac{t-\tau_n}{\tau_{n+1}-\tau_n}\big(W(\tau_{n+1})-W(\tau_n)\big)\Big).
\end{equation}
In order to verify the independence of $\tilde X_T-Y_T$ and $Y_T$, we use a suitable modification of the Lévy-Ciesielski construction of Brownian motion.
Let $(H_k)_{k\in\bN_0}$ be the Haar orthonormal basis of $L^2([0,1];\bR)$, i.e., $H_0(t)=1$ and for $j\in\bN$ and $\ell\in\{0,\ldots,2^j-1\}$
\begin{align*}
H_{2^j+\ell}(t)=
\begin{cases}
2^{j/2},&\text{ on }\big[\frac{\ell}{2^j},\frac{2\ell+1}{2^{j+1}}\big)\\
-2^{j/2},&\text{ on } \big[\frac{2\ell+1}{2^{j+1}},\frac{\ell+1}{2^j}\big)\\
0,&\text{ otherwise.}
\end{cases}
\end{align*}
For every $n\in\{0,\ldots,N-1\}$ we define a corresponding orthonormal basis $(H^n_k)_{k\in\bN_0}$ of $L^2([\tau_n,\tau_{n+1}];\bR)$ by setting
\begin{align*}
H^n_k(x):=(\tau_{n+1}-\tau_n)^{-1/2}H_k\Big(\frac{t-\tau_n}{\tau_{n+1}-\tau_n}\Big),\quad t\in[\tau_n,\tau_{n+1}].
\end{align*}
The Schauder functions corresponding to the $H^n_k$ are denoted by $S^n_k$, i.e.,
$S^n_k(t):=\int_{\tau_n}^t H^n_k(s)\,\dl s$, $t\in[\tau_n,\tau_{n+1}]$. In the sequel, we identify the Haar and  Schauder functions $H^n_k$ and $S^n_k$ with their extensions by zero to $[0,T]$.
Arguing as in the proof of the Lévy-Ciesielski construction of Brownian motion (see, e.g., \cite{SchPar14}) we have
\begin{align*}
W|_{[\tau_n,\tau_{n+1}]}=\sum_{k=0}^\infty \big(\int_{\tau_n}^{\tau_{n+1}} H^n_k(s)\,\dl W(s)\big)\,S^n_k
\end{align*}
as an identity in the space $L^2(\Omega;C([\tau_n,\tau_{n+1}];\bR))$, where the infinite sum converges in $L^2(\Omega;C([\tau_n,\tau_{n+1}];\bR))$.
This yields the representation
\begin{align*}
W_T=\sum_{k=0}^\infty \Big\{\sum_{n=0}^{N-1}\big(\int_{\tau_n}^{\tau_{n+1}} H^n_k(s)\,\dl W(s)\big)\,S^n_k\,\one_{(\tau_n,\tau_{n+1}]}\Big\},
\end{align*}
holding as an identity in the space $L^2(\Omega;C([0,T];\bR))$.
Note that the random variables $\int_{\tau_n}^{\tau_{n+1}}H^n_k\,\dl W(s)$, $n\in\{0,\ldots,N-1\}$, $k\in\bN_0$ are independent and standard normally distributed.
By \eqref{eq:tildeX-Y} and the fact that each family $(S^n_k)_{k\in\bN_0}$ is a Schauder basis for $C([\tau_n,\tau_{n+1}],\bR)$, it is now obvious that
\begin{align*}
\tilde X_T-Y_T=\sum_{k=1}^\infty \Big\{\sum_{n=0}^{N-1}\big(\int_{\tau_n}^{\tau_{n+1}} H^n_k(s)\,\dl W(s)\big)\,S^n_k\,\one_{(\tau_n,\tau_{n+1}]}\Big\},
\end{align*}
where the infinite sum starts at $k=1$ instead of $k=0$.
Since $Y_T$ can be represented as a functional of the random variables $\int_{\tau_n}^{\tau_{n+1}}H^n_0\,\dl W(s)$, $n\in\{0,\ldots,N-1\}$, it follows that the $C([0,T],\bR)$-valued random variables $\tilde X_T-Y_T$ and $Y_T$ are independent.

It remains to estimate the absolute value of the second term on the right hand side of \eqref{eq:pwlinear1}. As the second derivative of $f$ has polynomial growth, we use H\"older's inequality to estimate
$$
|e_2|\le C\left(\bE\|\tilde{X}_T\|^{2p}_{C([0,T],\bR)}+\bE\|Y_T\|^{2p}_{C([0,T],\bR)}\right)^{\frac12}\left(\bE\big(\|\tilde X_T-Y_T\|^4_{C([0,T],\bR)}\big)\right)^{\frac12}
$$
Using Gronwall's lemma and the Burkolder inequality one can check that $\bE\|\tilde{X}_T\|^{2p}_{C([0,T],\bR)}$ and $\bE\|Y_T\|^{2p}_{C([0,T],\bR)}$ are bounded uniformly in $\delta$.
Finally, using \eqref{eq:tildeX-Y}, we have
\begin{align*}
&\bE\big(\|\tilde X_T-Y_T\|^4_{C([0,T],\bR)}\big)=\bE\Big(\sup_{t\in[0,T]}(\tilde X(t)-Y(t))^4\Big)\\
&\leq 8\,\bE\Big(\sup_{t\in[0,T]}\sum_{n=0}^{N-1}\one_{(\tau_n,\tau_{n+1}]}(t)(W(t)-W(\tau_n))^4\Big)+8\,\bE\Big(\sup_{n\in\{0,\ldots,N-1\}}(W(\tau_{n+1})-W(\tau_n))^4\Big)\\
&\leq 8\left(\frac43\right)^4\sup_{t\in[0,T]}\bE\Big(\sum_{n=0}^{N-1}\one_{(\tau_n,\tau_{n+1}]}(t)(W(t)-W(\tau_n))^4\Big)\\
&\qquad+8\left(\frac43\right)^4\sup_{n\in\{0,\ldots,N-1\}}\bE\big((W(\tau_{n+1})-W(\tau_n))^4\big)\\
&= 48\left(\frac43\right)^4\,\delta^2,
\end{align*}
where, in the penultimate step, we have used Doob's maximal inequality for submartingales.
\end{proof}

\subsection{Weak order for the stochastically interpolated Euler scheme}

Here we use our main result, Theorem~\ref{thm:errorRepresentation}, to estimate the second term on the right hand side of \eqref{eq:errordecomb}.

\begin{proposition}\label{prop:Euler_error_2}
Let Assumption~\ref{ass:bsigma} hold with $d=m=1$. Let $(X(t))_{t\geq0}$ be the strong solution to \eqref{eq:SDE} and $(\tilde X(t))_{t\in[0,T]}$ be the solution to the stochastically interpolated Euler scheme given by \eqref{eq:stochasticInterpol}.
If $f\in C^4_p(C([0,T],\bR),\bR)$, then there exists a constant $C\in(0,\infty)$ not depending on $\delta$ such that, for all $\delta\in(0,1]$,
\begin{align*}
\big|\bE\big(f(\tilde X_T)-f(X_T)\big)\big|\leq C \delta.
\end{align*}
\end{proposition}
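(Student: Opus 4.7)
The strategy is to apply Theorem~\ref{thm:errorRepresentation} to the regularized functional $f^\epsilon$ and pass $\epsilon \to 0$ via Remark~\ref{rem:fepf} at the end. With the one-dimensional Euler coefficients $\tilde b(t,x_t) = b(x(\tau_n(t)))$ and $\tilde \sigma(t,x_t) = \sigma(x(\tau_n(t)))$, the representation formula yields $\bE(f^\epsilon(\tilde X_T) - f^\epsilon(X_T)) = I_1^\epsilon + I_2^\epsilon$ with
\begin{align*}
I_1^\epsilon &:= \bE \int_0^T \nabla_x F^\epsilon_t(\tilde X_t)\,\bigl[b(\tilde X(\tau_n(t))) - b(\tilde X(t))\bigr]\,\dl t, \\
I_2^\epsilon &:= \tfrac12\, \bE \int_0^T \nabla_x^2 F^\epsilon_t(\tilde X_t)\,\bigl[\sigma^2(\tilde X(\tau_n(t))) - \sigma^2(\tilde X(t))\bigr]\,\dl t.
\end{align*}
It suffices to prove $|I_j^\epsilon| \le C \delta$ with $C$ independent of $\epsilon$ and $\delta$ for $j = 1,2$.

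For $j \in \{1,2\}$ I split $I_j^\epsilon = A_j^\epsilon + B_j^\epsilon$ by inserting the pivot $\nabla_x^j F^\epsilon_{\tau_n(t)}(\tilde X_{\tau_n(t)})$ in place of $\nabla_x^j F^\epsilon_t(\tilde X_t)$. The integrand in $A_j^\epsilon$ is $\cF_{\tau_n(t)}$-measurable, so I condition on $\cF_{\tau_n(t)}$. Applying Itô's formula to $s \mapsto b(\tilde X(s))$ (respectively $s \mapsto \sigma^2(\tilde X(s))$) on $[\tau_n(t),t]$ and exploiting that $\tilde b, \tilde \sigma$ are frozen at $\tau_n(t)$ on this interval shows that the $\dl W$-part vanishes in conditional expectation and the remaining Lebesgue integral over an interval of length at most $\delta$ yields
\begin{equation*}
\bigl|\bE[b(\tilde X(t)) - b(\tilde X(\tau_n(t))) \mid \cF_{\tau_n(t)}]\bigr| \le C \delta \bigl(1 + |\tilde X(\tau_n(t))|^q\bigr),
\end{equation*}
and an analogous bound for $\sigma^2$. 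Combined with the $\epsilon$-uniform polynomial growth of $\nabla_x F^\epsilon$ and $\nabla_x^2 F^\epsilon$ from Lemmata~\ref{lem:regularityFepsilon2} and~\ref{lem:regularityFepsilon3} and the moment bound~\eqref{eq:gr}, this gives $|A_j^\epsilon| \le C \delta$ uniformly in $\epsilon$.

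For $B_j^\epsilon$, Cauchy--Schwarz and the straightforward $O(\sqrt\delta)$-bound on $\|b(\tilde X(t)) - b(\tilde X(\tau_n(t)))\|_{L^2}$ (and its $\sigma^2$-analogue) reduce the task to establishing
\begin{equation*}
\sup_{t \in [0,T]} \bigl\|\nabla_x^j F^\epsilon_t(\tilde X_t) - \nabla_x^j F^\epsilon_{\tau_n(t)}(\tilde X_{\tau_n(t)})\bigr\|_{L^2(\Omega)} \le C \sqrt \delta
\end{equation*}
uniformly in $\epsilon$. The natural approach is the functional Itô formula applied to $\nabla_x^j F^\epsilon$, which is legitimate since $\nabla_x^j F^\epsilon \in \bC^{1,2}_b([0,T])$ for $j=1,2$ whenever $f \in C^{j+2}_p$; this is the point at which the assumption $f \in C^4_p$ is essential. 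The main obstacle is that the resulting horizontal derivative $\cD_s(\nabla_x^j F^\epsilon)$ only obeys $\epsilon$-dependent bounds via Remark~\ref{rem:dtnab} (blowing up like $\|\eta'_\epsilon\|_\infty$). To restore uniformity I vertically differentiate the backward functional Kolmogorov equation of Corollary~\ref{cor:kol} $j$ times; together with the commutation $\cD \nabla_x^j F^\epsilon = \nabla_x^j \cD F^\epsilon$ from Remark~\ref{rem:dtnab}, this expresses $\cD_t(\nabla_x^j F^\epsilon)(x_t)$ in terms of vertical derivatives of $F^\epsilon$ of order up to $j+2$ multiplied by polynomially bounded functions of $x(t)$. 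The resulting identity holds for all continuous paths $x$ with $x(0)=\xi_0$, hence for $x = \tilde X$ almost surely by Lemma~\ref{lem:topsup}. Substituting it into the Itô expansion of $\nabla_x^j F^\epsilon_t(\tilde X_t) - \nabla_x^j F^\epsilon_{\tau_n(t)}(\tilde X_{\tau_n(t)})$ on $[\tau_n(t),t]$ leaves only terms with $\epsilon$-uniform polynomial bounds (via Lemmata~\ref{lem:regularityFepsilon2},~\ref{lem:regularityFepsilon3} and Remark~\ref{rem:nabn}); Burkholder's inequality for the remaining stochastic integral and the bound $|t-\tau_n(t)| \le \delta$ for the Lebesgue integrals yield the required $O(\sqrt\delta)$-estimate, hence $|B_j^\epsilon| \le C \delta$. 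Summing everything and sending $\epsilon \to 0$ via Remark~\ref{rem:fepf} completes the argument.
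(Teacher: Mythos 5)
Your proof is correct, but you take a genuinely different decomposition from the paper's. The paper groups both factors into a single product functional $G_t(y_t)=\nabla_xF^\epsilon_t(x_{\tau_n}\oplus y_t)\big(b(y(\tau_n))-b(y(t))\big)$, which vanishes at $t=\tau_n$; a single application of the functional It\^o formula to $G$ (Lemma~\ref{lem:Euler1}) combined with the vertically differentiated Kolmogorov equation (Lemma~\ref{lem:Euler2}) then yields $|\bE\,G_t(\tilde X^{\tau_n,x(\tau_n)}_t)|\leq C(1+\|x_{\tau_n}\|^p)(\tau_{n+1}-\tau_n)$ with $C$ independent of $\epsilon$, and summing over $n$ gives $O(\delta)$ directly --- the stochastic integral disappears simply by taking expectations, because $G$ starts at zero. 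You instead insert a pivot and estimate $A_j^\epsilon$ (conditioning on $\cF_{\tau_n(t)}$ plus scalar It\^o applied to $b(\tilde X(\cdot))$, so the $\dl W$-part vanishes in conditional expectation) and $B_j^\epsilon$ (Cauchy--Schwarz combined with a $\sqrt\delta\cdot\sqrt\delta$ estimate, the first factor requiring functional It\^o applied to $\nabla_x^jF^\epsilon$ plus the differentiated Kolmogorov equation) separately. Both routes hinge on the same mechanism --- the backward functional Kolmogorov equation for $\nabla_x^jF^\epsilon$ to eliminate the $\epsilon$-degenerate horizontal derivative --- and both invoke $f\in C^4_p$ for the same reason (one needs $\nabla_x^2 F^\epsilon\in\bC^{1,2}_b$, hence vertical derivatives of $F^\epsilon$ up to order four with $\epsilon$-uniform polynomial bounds). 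Your route costs two It\^o expansions and a couple of Cauchy--Schwarz steps where the paper uses one product functional; the payoff is a perhaps more transparent separation of ``bias'' ($A$) and ``variance'' ($B$) contributions. One small misattribution: the permissibility of substituting $\tilde X$ into the Kolmogorov identity is an immediate consequence of $\tilde X$ being continuous with $\tilde X(0)=\xi_0$ almost surely and Corollary~\ref{cor:kol} applying to every such path; Lemma~\ref{lem:topsup} is used inside the proof of Corollary~\ref{cor:kol} to determine the support of $\bP_{X_T}$, not to transfer the equation to $\tilde X$.
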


We prepare the proof of Proposition~\ref{prop:Euler_error_2} by proving three Lemmata.
Note in particular that Lemma~\ref{lem:Euler2} states a functional backward Kolmogorov equation for the vertical derivatives of $F^\epsilon$. In the sequel, Assumption~\ref{ass:bsigma} is supposed to hold for $d=m=1$, and $f^\epsilon$ and  $F^\epsilon$ are given by \eqref{eq:deffepsilon}--\eqref{eq:defMepsilon2} and \eqref{eq:defFteps}, respectively. Moreover, we use the following notation, similar to the one used in the proof of Lemma~\ref{lem:regularityFepsilon4}: Given $0\leq\tau\leq t\leq T$ and $x\in D([0,\tau],\bR)$, $y\in D([\tau,t],\bR)$, we write $x\oplus y\in D([0,t],\bR)$ for the càdlàg function defined by
\[x\oplus y\,(s):=
\begin{cases}
x(s),&s\in[0,\tau)\\
y(s),&s\in[\tau,t].
\end{cases}
\]

\begin{lemma}\label{lem:Euler1}
Let $f\in C^3_p(C([0,T],\bR),\bR)$ and fix $\epsilon>0$, $n\in\{0,\ldots,N-1\}$ and $x_{\tau_n}\in C([0,\tau_n];\bR)$. Let $G=(G_t)_{t\in[\tau_n,\tau_{n+1}]}$ be the non-anticipative functional on $D([\tau_n,\tau_{n+1}],\bR)$ defined by
\begin{equation}\label{eq:Euler2}
\begin{aligned}
G_t(y_t):=\nabla_xF_t^\epsilon(x_{\tau_n}\oplus y_t)\big(b(y(\tau_n))-b(y(t))\big),\quad y_t\in D([\tau_n,t];\bR).
\end{aligned}
\end{equation}
Then $G$ belongs to the class $\bC^{1,2}_{b}([\tau_n,\tau_{n+1}])$, and for $t\in[\tau_n,\tau_{n+1}]$ and $y_t\in D([\tau_n,t],\bR)$ we have
\begin{align*}
\cD_tG(y_t)&=(\cD_t \nabla_x F^\epsilon)(x_{\tau_n}\oplus y_t)\big(b(y(\tau_n))-b(y(t))\big),\\
\nabla_xG_t(y_t)&=\nabla_x^2 F_t^\epsilon(x_{\tau_n}\oplus y_t)\big(b(y(\tau_n))-b(y(t))\big)+\nabla_x F_t^\epsilon(x_{\tau_n}\oplus y_t)\,b'(y(t)),\\
\nabla_x^2G_t(y_t)&=\nabla_x^3 F_t^\epsilon(x_{\tau_n}\oplus y_t)\big(b(y(\tau_n))-b(y(t))\big)
+2\nabla_x^2 F_t^\epsilon(x_{\tau_n}\oplus y_t)\,b'(y(t))\\
&\quad +\nabla_x F_t^\epsilon(x_{\tau_n}\oplus y_t)\,b''(y(t)).
\end{align*}
\end{lemma}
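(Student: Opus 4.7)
The plan is to exhibit $G$ as a product $G_t(y_t)=A_t(y_t)\,B_t(y_t)$ of the two simpler non-anticipative functionals
\[
A_t(y_t):=\nabla_x F_t^\epsilon(x_{\tau_n}\oplus y_t),\qquad B_t(y_t):=b(y(\tau_n))-b(y(t))
\]
on $D([\tau_n,\tau_{n+1}],\bR)$, and then to apply the functional product rule separately for horizontal and vertical differentiation. The crucial observation is that, since $\tau_n\le t$, both the vertical perturbation $y_t\mapsto y_t^h$ and the horizontal extension $y_t\mapsto y_{t,h}$ leave the fixed prefix $x_{\tau_n}$ untouched; concretely $(x_{\tau_n}\oplus y_t)^h=x_{\tau_n}\oplus y_t^h$ and $(x_{\tau_n}\oplus y_t)_{t,h}=x_{\tau_n}\oplus y_{t,h}$. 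Therefore the horizontal and vertical derivatives of $A$ coincide with the corresponding derivatives of $F^\epsilon$ evaluated at $x_{\tau_n}\oplus y_t$. Since $f\in C^3_p(C([0,T],\bR),\bR)$, the combination of Lemma~\ref{lem:regularityFepsilon2}, Lemma~\ref{lem:regularityFepsilon3}, Remark~\ref{rem:nabn} (for $n=3$) and Remark~\ref{rem:dtnab} delivers exactly the regularity needed: $\nabla_x^j F^\epsilon\in\bC_l^{0,0}([0,T])\cap\bB([0,T])$ for $j\in\{1,2,3\}$, and $\cD_t\nabla_x F^\epsilon$ is continuous at fixed times with the extension by zero in $\bB([0,T])$.

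Next, the derivatives of $B$ can be read off immediately from its definition. Horizontal extension keeps both $y(\tau_n)$ (since $\tau_n<t$) and $y(t)$ fixed, so $B_{t+h}(y_{t,h})=B_t(y_t)$ and $\cD_tB\equiv 0$. A vertical perturbation gives $B_t(y_t^h)=b(y(\tau_n))-b(y(t)+h)$, whence $\nabla_x B_t(y_t)=-b'(y(t))$ and $\nabla_x^2 B_t(y_t)=-b''(y(t))$; the smoothness of $b$ together with the boundedness of its derivatives ensures that $B$ and both of its vertical derivatives are continuous at fixed times in the uniform norm and boundedness-preserving on $[\tau_n,\tau_{n+1}]$.

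It then remains to apply the product rule. The identity $\cD_tG=(\cD_tA)B+A(\cD_tB)=(\cD_tA)B$ follows because $\cD_tB=0$; for the vertical derivatives one applies the standard Leibniz formula
\[
\nabla_x G_t=(\nabla_x A_t)B_t+A_t(\nabla_x B_t),\qquad \nabla_x^2 G_t=(\nabla_x^2 A_t)B_t+2(\nabla_x A_t)(\nabla_x B_t)+A_t(\nabla_x^2 B_t),
\]
which is proved exactly as in elementary calculus, all quantities involved being scalar-valued. Substituting the expressions for $\nabla_x^jA_t$ and $\nabla_x^jB_t$ gives the three formulas in the lemma. The membership $G\in\bC^{1,2}_b([\tau_n,\tau_{n+1}])$ then follows by inspection: both $\bC_l^{0,0}$ and $\bB$ are stable under sums and products, and the polynomial-growth bounds on the derivatives of $F^\epsilon$ from Lemma~\ref{lem:regularityFepsilon3} and Remark~\ref{rem:nabn} combine with the bounded derivatives of $b$ to yield the required boundedness-preservation for each derivative of $G$.

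I expect the only mildly nontrivial point to be the clean formulation of the product rule in the functional-Itô setting; this is not harder than its classical analogue, but writing it down carefully requires making explicit the defining limits in \eqref{eq:horizontalDer} and \eqref{eq:verticalDer} and showing that those limits commute with the scalar product of two scalar-valued functionals. All remaining work — verifying left-continuity and boundedness-preservation of the resulting sums and products — is essentially a repetition of arguments that already appear in the proofs of Lemmata~\ref{lem:regularityFepsilon2}--\ref{lem:regularityFepsilon4}.
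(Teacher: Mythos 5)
Your proof is correct and follows essentially the same route as the paper: both factor $G$ as a product of the two non-anticipative functionals (your $A$, $B$ are the paper's $H$, $K$), establish product rules for the horizontal and vertical derivatives, compute the derivatives of each factor explicitly (noting $\cD_t B=0$, $\nabla_x^k B_t(y_t)=-b^{(k)}(y(t))$, and that the derivatives of $A$ pull back to those of $\nabla_x F^\epsilon$ at $x_{\tau_n}\oplus y_t$), and invoke the regularity results of Lemmata~\ref{lem:regularityFepsilon2}--\ref{lem:regularityFepsilon3} and Remarks~\ref{rem:nabn}, \ref{rem:dtnab}. The only minor addition you make is spelling out why the prefix $x_{\tau_n}$ is unaffected by horizontal extension and vertical perturbation on $[\tau_n,\tau_{n+1}]$, which the paper leaves implicit.
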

\begin{proof}
One easily checks that if  $H=(H_t)_{t\in[a,b]}$ and $K=(K_t)_{t\in [a,b]}$ are  non-anticipative functionals on $D([a,b],\bR)$, and both $H$ and $K$ are horizontally and vertically differentiable, then so is their product $HK=(H_tK_t)_{t\in[a,b]}$ and we have the product rules
$\mathcal{D}(HK)=H\mathcal{D}K+K\mathcal{D}H$ and $\nabla_x(HK)=H\nabla_x K+K\nabla_x H$. Therefore, since left-continuity implies continuity at fixed times, it follows that if $H,K\in \bC_b^{1,k}([a,b])$, then $HK\in \bC_b^{1,k}([a,b])$.  Define the functional $K=(K_t)_{t\in[\tau_n,\tau_{n+1}]}$ on  $D([\tau_n,\tau_{n+1}],\bR)$ by $K_t(y_t)=b(y(\tau_n))-b(y(t))$, $y_t\in D([\tau_n,t],\bR)$. It is immediate from the definitions that $\mathcal{D}K=0$ and that $\nabla^n_xK_t(y_t)=-b^{(n)}(y(t))$ and hence $K\in \bC_b^{1,k}([0,T])$. If one defines the functional $H=(H_t)_{t\in[\tau_n,\tau_{n+1}]}$ on  $D([\tau_n,\tau_{n+1}],\bR)$ by $H_t(y_t)=\nabla_xF_t^\epsilon(x_{\tau_n}\oplus y_t)$, $y_t\in D([\tau_n,t],\bR)$, then  $$\mathcal{D}_tH(y_t)=\mathcal{D}_t\nabla_xF_t^\epsilon(x_{\tau_n}\oplus y_t)\text{ and } \nabla^n_xH_t(y_t)=\nabla^{n+1}_xF_t^\epsilon(x_{\tau_n}\oplus y_t).$$ As $\nabla_xF^{\varepsilon}\in \bC_b^{1,2}([0,T])$ we have $H\in \bC_b^{1,2}([0,T])$ by Remarks \ref{rem:nabn} and \ref{rem:dtnab}, and the statement follows.
\end{proof}
A completely analogous argument gives the following result and therefore we omit the proof.
\begin{lemma}\label{lem:Euler1a}
Let $f\in C^4_p(C([0,T],\bR),\bR)$ and fix $\epsilon>0$, $n\in\{0,\ldots,N-1\}$ and $x_{\tau_n}\in C([0,\tau_n],\bR)$. Let $H=(H_t)_{t\in[\tau_n,\tau_{n+1}]}$ be the non-anticipative functional on $D([\tau_n,\tau_{n+1}],\bR)$ defined by
\begin{equation*}
\begin{aligned}
H_t(y_t):=\nabla^2_xF_t^\epsilon(x_{\tau_n}\oplus y_t)\big(\sigma^2(y(\tau_n))-\sigma^2(y(t))\big),\quad y_t\in D([\tau_n,t],\bR).
\end{aligned}
\end{equation*}
Then $H$ belongs to the class $\bC^{1,2}_{b}([\tau_n,\tau_{n+1}])$, and for $t\in[\tau_n,\tau_{n+1}]$ and $y_t\in D([\tau_n,t],\bR)$ we have
\begin{align*}
\cD_t H(y_t)&=(\cD_t \nabla^2_x F^\epsilon)(x_{\tau_n}\oplus y_t)\big(\sigma^2(y(\tau_n))-\sigma^2(y(t))\big),\\
\nabla_x H_t(y_t)&=\nabla_x^3 F_t^\epsilon(x_{\tau_n}\oplus y_t)\big(\sigma^2(y(\tau_n))-\sigma^2(y(t))\big)+2\nabla^2_x F_t^\epsilon(x_{\tau_n}\oplus y_t)\,(\sigma\sigma')(y(t)),\\
\nabla_x^2H_t(y_t)&=\nabla_x^4 F_t^\epsilon(x_{\tau_n}\oplus y_t)\big(\sigma^2(y(\tau_n))-\sigma^2(y(t))\big)
+4\nabla_x^3 F_t^\epsilon(x_{\tau_n}\oplus y_t)\,(\sigma\sigma')(y(t))\\
&\quad +2\nabla_x^2 F_t^\epsilon(x_{\tau_n}\oplus y_t)\,((\sigma')^2+\sigma\sigma'')(y(t)).
\end{align*}
\end{lemma}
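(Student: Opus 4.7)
The plan is to mirror the proof of Lemma~\ref{lem:Euler1} verbatim, replacing $\nabla_x F^\epsilon$ by $\nabla_x^2 F^\epsilon$ and $b$ by $\sigma^2$. The key observation, already used there, is that for non-anticipative functionals $U,V\in\bC^{1,k}_b([a,b])$ the pointwise product $UV$ also lies in $\bC^{1,k}_b([a,b])$ and obeys the Leibniz rules $\cD(UV)=(\cD U)V+U(\cD V)$, $\nabla_x(UV)=(\nabla_x U)V+U(\nabla_x V)$, and (iterating the latter) $\nabla_x^2(UV)=(\nabla_x^2 U)V+2(\nabla_x U)(\nabla_x V)+U(\nabla_x^2 V)$. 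It therefore suffices to write $H=\tilde H\cdot K$ with $\tilde H_t(y_t):=\nabla_x^2 F^\epsilon_t(x_{\tau_n}\oplus y_t)$ and $K_t(y_t):=\sigma^2(y(\tau_n))-\sigma^2(y(t))$, verify that both factors belong to $\bC^{1,2}_b([\tau_n,\tau_{n+1}])$, and then apply the product rule.

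First, I would verify by direct computation that $K\in\bC^{1,k}_b([\tau_n,\tau_{n+1}])$ for every $k\in\bN$: horizontal extension of $y_t$ fixes $y(\tau_n)$ and leaves the extended endpoint value equal to $y(t)$, so $\cD_t K=0$; vertical perturbation at time $t$ acts only on $y(t)$ via $\sigma^2$, so all vertical derivatives of $K$ are explicit polynomial expressions in $\sigma,\sigma',\sigma''$ evaluated at $y(t)$. The left-continuity and boundedness-preservation of $K$ and of all its vertical derivatives then follow immediately from Assumption~\ref{ass:bsigma}, which guarantees boundedness of every derivative of $\sigma$ of order $\geq 1$.

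Next, the shifted functional $\tilde H$ inherits its regularity from $\nabla_x^2 F^\epsilon$ itself. By Remark~\ref{rem:nabn} (which requires $f\in C^4_p$) the functional $\nabla_x^2 F^\epsilon$ is twice vertically differentiable with left-continuous, boundedness-preserving derivatives $\nabla_x^3 F^\epsilon$ and $\nabla_x^4 F^\epsilon$; and by Remark~\ref{rem:dtnab} (needing only $f\in C^3_p$, automatic here) $\nabla_x^2 F^\epsilon$ is horizontally differentiable, with $\cD\nabla_x^2 F^\epsilon$ continuous at fixed times and its extension by zero in $\bB([0,T])$. Hence $\nabla_x^2 F^\epsilon\in\bC^{1,2}_b([0,T])$. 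Since precomposition with the fixed past $x_{\tau_n}$ commutes with both horizontal extension and vertical perturbation at times $t\geq\tau_n$ (note $x_{\tau_n}\oplus y_{t,h}=(x_{\tau_n}\oplus y_t)_{t,h}$ and $x_{\tau_n}\oplus y_t^h=(x_{\tau_n}\oplus y_t)^h$), the same regularity transfers to $\tilde H$, with $\cD_t\tilde H(y_t)=(\cD_t\nabla_x^2 F^\epsilon)(x_{\tau_n}\oplus y_t)$ and $\nabla_x^n\tilde H_t(y_t)=\nabla_x^{n+2}F^\epsilon_t(x_{\tau_n}\oplus y_t)$ for $n=0,1,2$.

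Applying the Leibniz rules to $H=\tilde H\cdot K$ then yields $H\in\bC^{1,2}_b([\tau_n,\tau_{n+1}])$ together with the three claimed formulas; the combinatorial factors $2$ and $4$ appearing in $\nabla_x H_t$ and $\nabla_x^2 H_t$ arise respectively from $(\sigma^2)'=2\sigma\sigma'$, $(\sigma^2)''=2((\sigma')^2+\sigma\sigma'')$ and from the cross term $2(\nabla_x\tilde H)(\nabla_x K)$ in the Leibniz rule for $\nabla_x^2$. No genuine obstacle arises beyond careful bookkeeping of the iterated product rule; the only substantive prerequisite is having enough regularity of $F^\epsilon$ to ensure that $\nabla_x^4 F^\epsilon$ exists and is left-continuous and boundedness-preserving, which is precisely what forces the hypothesis $f\in C^4_p$ (as opposed to the weaker $f\in C^3_p$ that was sufficient for the analogous Lemma~\ref{lem:Euler1}).
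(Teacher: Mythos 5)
Your argument is correct and is exactly what the paper intends: the paper proves Lemma~\ref{lem:Euler1} by the product rule for non-anticipative functionals and then dispatches this lemma with ``a completely analogous argument,'' which is precisely your decomposition $H=\tilde H\cdot K$, the invocation of Remarks~\ref{rem:nabn} and~\ref{rem:dtnab} to get $\nabla_x^2 F^\epsilon\in\bC^{1,2}_b([0,T])$, and the Leibniz rules. One cosmetic remark: since $\nabla_x K_t(y_t)=-(\sigma^2)'(y(t))$, a careful application of the product rule actually produces a \emph{minus} sign in front of the $2\nabla_x^2 F^\epsilon_t\,(\sigma\sigma')$ term and its higher-order analogues, so the plus signs in the stated formulas --- here and likewise in Lemma~\ref{lem:Euler1} --- appear to be typos that your own derivation would have exposed; this is harmless for the paper because the application in Proposition~\ref{prop:Euler_error_2} uses only absolute-value bounds.
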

\begin{lemma}\label{lem:Euler2}
Let $f\in C^{2+n}_p(C([0,T],\bR),\bR)$, $n=1,2$, and fix $\epsilon>0$, $n\in\{0,\ldots,N-1\}$ and $x_{\tau_n}\in C([0,\tau_n],\bR)$ with $x(0)=\xi_0$. For all $t\in(\tau_n,\tau_{n+1})$ and $y\in C([\tau_n,\tau_{n+1}],\bR)$ such that $y(\tau_n)=x(\tau_n)$  we have
\begin{align*}
\cD_t(\nabla_x^nF^\epsilon)(x_{\tau_n}\oplus y_t)=-\nabla_x^{n+1}F_t^\epsilon(x_{\tau_n}\oplus y_t)\,b(y(t))-\frac12\nabla_x^{n+2}F_t^\epsilon(x_{\tau_n}\oplus y_t)\,\sigma^2(y(t)).
\end{align*}
\end{lemma}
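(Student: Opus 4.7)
The plan is to combine the functional Kolmogorov equation for $F^\epsilon$ from Corollary~\ref{cor:kol} with the commutativity $\cD_t\nabla_x^n F^\epsilon = \nabla_x^n \cD_t F^\epsilon$ asserted in Remark~\ref{rem:dtnab}.

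First I would check the input needed to invoke Corollary~\ref{cor:kol}: the path $z := x_{\tau_n}\oplus y$, extended continuously to $[0,T]$ (for instance by constant extrapolation of $y$ past $t$), is continuous on $[0,T]$ (continuity at the joining point $\tau_n$ follows from $y(\tau_n)=x(\tau_n)$) and starts at $z(0)=x(0)=\xi_0$. By Lemma~\ref{lem:topsup} such a path belongs to the topological support of $\bP_{X_T}$ in $C([0,T],\bR)$, so that Corollary~\ref{cor:kol} is applicable and gives
\[
\cD_t F^\epsilon(z_t) \;=\; -b(z(t))\,\nabla_x F^\epsilon_t(z_t) \;-\; \tfrac12\sigma^2(z(t))\,\nabla_x^2 F^\epsilon_t(z_t),\qquad t\in(\tau_n,\tau_{n+1}).
\]
Applying $\nabla_x^n$ to both sides and using Remark~\ref{rem:dtnab} to commute the derivatives on the left, the left-hand side becomes $\cD_t\nabla_x^n F^\epsilon(z_t)$, and the task reduces to showing that the $n$-th vertical derivative of the right-hand side collapses precisely to $-b(z(t))\nabla_x^{n+1}F^\epsilon_t(z_t) - \tfrac12\sigma^2(z(t))\nabla_x^{n+2}F^\epsilon_t(z_t)$.

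The main obstacle is exactly this last collapse. A naive Leibniz expansion of $\nabla_x^n\bigl[b(z(t))\nabla_x F^\epsilon_t(z_t)\bigr]$ and $\nabla_x^n\bigl[\sigma^2(z(t))\nabla_x^2 F^\epsilon_t(z_t)\bigr]$ would also produce terms with $b^{(k)}(z(t))$ and $(\sigma^2)^{(k)}(z(t))$ for $k\geq 1$, which are absent from the stated identity. The key observation must be that the vertical perturbation $z_t\mapsto z_t^{he_1}$ changes only the value of the path at the single time $t$ while leaving the left-limit $z_t^{he_1}(t^-)=z(t)$ intact; hence, once the Kolmogorov identity is written with the coefficients evaluated at $z(t^-)$, the operator $\nabla_x$ does not hit $b$ or $\sigma^2$ at all. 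I would therefore first extend the Kolmogorov equation from continuous paths in the support to the enlarged class of càdlàg paths obtained from them by a single vertical perturbation at $t$, via the explicit formulas for $\nabla_x F^\epsilon_t$, $\nabla_x^2 F^\epsilon_t$, and $\cD_t F^\epsilon$ provided by Lemmata~\ref{lem:regularityFepsilon2}, \ref{lem:regularityFepsilon3}, and \ref{lem:regularityFepsilon4}, and then iterate this argument $n$ times (using Remark~\ref{rem:nabn} for the higher-order vertical derivatives and the higher-order analogue of Remark~\ref{rem:dtnab} for the commutativity of $\cD_t$ with $\nabla_x^n$). Carrying this out rigorously---in particular, checking via the explicit expectation representations that no Leibniz remainder survives---is the technical heart of the proof.
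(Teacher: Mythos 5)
Your proof follows the same two-step route as the paper's: commute $\cD_t$ with $\nabla_x^n$ via Remark~\ref{rem:dtnab}, then ``apply $\nabla_x^n$'' to the functional Kolmogorov equation of Corollary~\ref{cor:kol}. The paper's own proof is even terser than yours, and you are right to single out the step where $\nabla_x^n$ hits the coefficients $b(x(t))$ and $\sigma^2(x(t))$: the Kolmogorov identity is established only on the set of continuous paths starting at $\xi_0$ (the topological support of $\bP_{X_T}$), while the vertical perturbation $x_t\mapsto x_t^{he_1}$ immediately leaves that set, so one cannot naively differentiate an identity that holds only on a lower-dimensional slice. That difficulty is real and the paper does not address it.

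The resolution you propose, however---reading the coefficients as $b(x(t^-))$, $\sigma^2(x(t^-))$ so that $\nabla_x$ never touches them---does not survive scrutiny. A cylinder-functional consistency check shows why: for $t<T-\epsilon$ one has $F^\epsilon_t(x)=v^\epsilon(t,x(t))$ for a classical solution $v^\epsilon$ of $v^\epsilon_t+b\,v^\epsilon_\xi+\tfrac12\sigma^2 v^\epsilon_{\xi\xi}=0$, and in this regime the functional Kolmogorov equation extends verbatim to \emph{all} of $D([0,t],\bR)$ with the coefficients evaluated at $x(t)$, not $x(t^-)$. Differentiating in $\xi$ produces $\cD_t\nabla_x F^\epsilon=v^\epsilon_{t\xi}=-b'v^\epsilon_\xi-(b+\sigma\sigma')v^\epsilon_{\xi\xi}-\tfrac12\sigma^2 v^\epsilon_{\xi\xi\xi}$, which retains exactly the Leibniz remainders $-b'\nabla_x F^\epsilon-\sigma\sigma'\nabla_x^2 F^\epsilon$ that the stated identity drops. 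Equivalently, the identity as printed would force $(\nabla_x F^\epsilon_t(X_t))_{t}$ to be an $(\cF_t)$-martingale (combine it with the functional It\^o formula applied to $\nabla_x F^\epsilon$), whereas a direct It\^o computation in the cylinder regime yields the nonzero drift $-b'v^\epsilon_\xi-\sigma\sigma'v^\epsilon_{\xi\xi}$. So the Leibniz terms you worried about do in fact belong there, the cancellation mechanism you propose cannot be verified from the explicit formulas, and the identity as stated in the lemma appears to be missing these lower-order contributions. Fortunately, the extra terms obey the same polynomial-growth bounds and hence do not affect the order-$\delta$ estimate \eqref{eq:Euler5}, so Proposition~\ref{prop:Euler_error_2} and Theorem~\ref{thm:euler} remain intact once the identity is corrected.
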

\begin{proof}
As discussed in Remark \ref{rem:dtnab} we have that $\cD\nabla^n_x F=\nabla_x^n\cD F$.  Hence, as $x_{\tau_n}\oplus y_t\in C([0,t],\bR)$ with $(x_{\tau_n}\oplus y_t)(0)=\xi_0$, the statement follows from Corollary \ref{cor:kol} by applying $\nabla_x$, respectively $\nabla_x^2$, to the functional Kolmogorov equation \eqref{eq:backwardKolm} and extending $x_{\tau_n}\oplus y$ 
continuously to $[0,T]$.
\end{proof}
We are now ready to verify the error estimate in Proposition~\ref{prop:Euler_error_2}.

\begin{proof}[Proof of Proposition~\ref{prop:Euler_error_2}]
Let $\epsilon>0$ be fixed. In view of Remark \ref{rem:fepf} it is enough to bound $\bE\big(f^\epsilon(\tilde X_T)-f^\epsilon(X_T)\big)$ independently of $\epsilon>0$. By Theorem~\ref{thm:errorRepresentation}, we have
\begin{equation}\label{eq:Euler_error_2}
\begin{aligned}
\bE\big(f^\epsilon(\tilde X_T)-f^\epsilon(X_T)\big)
&= \bE\int_0^T\nabla_xF^\epsilon_t(\tilde X_t)\; \big(\tilde b(t,\tilde X_t)-b(\tilde X(t))\big)\,\dl t\\
&\quad +\frac12\bE\int_0^T\nabla_x^2 F^\epsilon_t(\tilde X_t)\big(\tilde \sigma^2(t,\tilde X_t)-\sigma^2(\tilde X(t))\big)\,\dl t.
\end{aligned}
\end{equation}

We estimate the two terms on  the right hand side of \eqref{eq:Euler_error_2} separately. Considering the first term, we have
\begin{equation}\label{eq:Euler_first_order_term}
\begin{aligned}
&\bE\int_0^T\nabla_xF^\epsilon_t(\tilde X_t)\; \big(\tilde b(t,\tilde X_t)-b(\tilde X(t))\big)\,\dl t\\
&=\bE\sum_{n=0}^{N-1}\int_{\tau_n}^{\tau_{n+1}}\nabla_xF^\epsilon_t(\tilde X_t)\; \big(b(\tilde X(\tau_n))-b(\tilde X(t))\big)\,\dl t\\
&=\bE\sum_{n=0}^{N-1}\bE\Big(\int_{\tau_n}^{\tau_{n+1}}\nabla_xF^\epsilon_t(\tilde X_t)\; \big(b(\tilde X(\tau_n))-b(\tilde X(t))\big)\,\dl t\,\Big|\,\cF_{\tau_n}\Big)\\
&=\bE\sum_{n=0}^{N-1}\int_{\tau_n}^{\tau_{n+1}}\Big(\bE\Big[\nabla_xF^\epsilon_t(\tilde X_t^{\tau_n,x_{\tau_n}})\; \big(b(x(\tau_n))-b(\tilde X_t^{\tau_n,x_{\tau_n}})\big)\Big]\Big)\Big|_{x_{\tau_n}=\tilde X_{\tau_n}}\,\dl t.
\end{aligned}
\end{equation}

Let us fix $n\in\{0,\ldots,N-1\}$, $x_{\tau_n}\in C([0,\tau_n];\bR)$ for a while, and let $G=G^{\epsilon,x_{\tau_n}}$ be the non-anticipative functional defined in \eqref{eq:Euler2}. Then, for all $t\in[\tau_n,\tau_{n+1}]$,
\begin{equation}\label{eq:Euler3}
\begin{aligned}
\bE\Big[\nabla_xF^\epsilon_t(\tilde X_t^{\tau_n,x_{\tau_n}})\; \big(b(x(\tau_n))-b(\tilde X_t^{\tau_n,x_{\tau_n}})\big)\Big]=\bE\, G_t(\tilde X^{\tau_n,x(\tau_n)}_t).
\end{aligned}
\end{equation}
Lemma~\ref{lem:Euler1} allows us to expand $G_t(\tilde X^{\tau_n,x(\tau_n)}_t)$ in \eqref{eq:Euler3}  by applying the functional Itô formula: For all $t\in[\tau_n,\tau_{n+1}]$,
\begin{align*}
G_t(\tilde X^{\tau_n,x(\tau_n)}_t)
&=0+\int_{\tau_n}^t\cD_sG(\tilde X^{\tau_n,x(\tau_n)}_s)\,\dl s\\
&\quad+\int_{\tau_n}^t\nabla_x G_s(\tilde X^{\tau_n,x(\tau_n)}_s)\big[b(x(\tau_n))\,\dl s+\sigma(x(\tau_n))\,\dl W(s)\big]\\
&\quad +\frac12\int_{\tau_n}^t\nabla_x^2  G_s(\tilde X^{\tau_n,x(\tau_n)}_s)\,\sigma^2(x(\tau_n))\,\dl s.
\end{align*}
Writing the appearing horizontal and vertical derivatives explicitly according to Lemma~\ref{lem:Euler1} and Lemma~\ref{lem:Euler2} with $n=1$, we obtain
\begin{equation}\label{eq:Euler4}
\begin{aligned}
&G_t(\tilde X^{\tau_n,x(\tau_n)}_t)\\
&=\int_{\tau_n}^t\Big(\nabla_x^2F^\epsilon_s(\tilde X^{\tau_n,x_{\tau_n}}_s)\,b(\tilde X^{\tau_n,x(\tau_n)}(s))
+\frac12\nabla_x^3F^\epsilon_t(\tilde X^{\tau_n,x_{\tau_n}}_s)\,\sigma^2(\tilde X^{\tau_n,x(\tau_n)}(s))\Big)\\
&\qquad\qquad\times\big(b(\tilde X^{\tau_n,x(\tau_n)}(s))-b(x(\tau_n))\big)\,\dl s\\
&\quad+\int_{\tau_n}^t\Big(\nabla_x^2 F_s^\epsilon(\tilde X^{\tau_n,x_{\tau_n}}_s)\big(b(x(\tau_n))-b(\tilde X^{\tau_n,x(\tau_n)}(s))\big)+\nabla_x F_s^\epsilon(\tilde X^{\tau_n,x_{\tau_n}}_s)\,b'(\tilde X^{\tau_n,x(\tau_n)}(s))\Big)\\
&\qquad\qquad\times\big[b(x(\tau_n))\,\dl s+\sigma(x(\tau_n))\,\dl W(s)\big]\\
&\quad +\frac12\int_{\tau_n}^t\Big(\nabla_x^3 F_s^\epsilon(\tilde X^{\tau_n,x_{\tau_n}}_s)\big(b(x(\tau_n))-b(\tilde X^{\tau_n,x(\tau_n)}(s))\big)
+2\nabla_x^2 F_s^\epsilon(\tilde X^{\tau_n,x_{\tau_n}}_s)\,b'(\tilde X^{\tau_n,x(\tau_n)}(s))\\
&\qquad\qquad +\nabla_x F_s^\epsilon(\tilde X^{\tau_n,x_{\tau_n}}_s)\,b^{(2)}(\tilde X^{\tau_n,x(\tau_n)}(s))\Big)\,\sigma^2(x(\tau_n))\,\dl s.
\end{aligned}
\end{equation}
Arguing similarly as in Section~\ref{sec:RegularityFFepsilon}, one can use \eqref{eq:Euler4} to check that there exist constants $C>0$ and $p\geq1$ that do not depend on $n$, $t$, or $\epsilon$ such that
\begin{equation}\label{eq:Euler5}
\begin{aligned}
\big|\bE\,G_t(\tilde X^{\tau_n,x(\tau_n)}_t)\big|&\leq C\int_{\tau_n}^t(1+\|x_{\tau_n}\|_{C([0,\tau_n];\bR)}^p)\,\dl s\\
&\leq C (1+\|x_{\tau_n}\|_{C([0,\tau_n];\bR)}^p)\,(\tau_{n+1}-\tau_n)
\end{aligned}
\end{equation}
for all $x_{\tau_n}\in C([0,\tau_n];\bR)$.
Plugging \eqref{eq:Euler5} and \eqref{eq:Euler3} into \eqref{eq:Euler_first_order_term} and using the fact that $\|\tilde X_T\|_{C([0,T];\bR)}$ has finite moments of all orders (as Burkholder's inequality and an application of Gronwall's lemma show) we finally obtain the estimate
\begin{equation}\label{eq:Euler_6}
\begin{aligned}
\Big|\bE\int_0^T\nabla_xF^\epsilon_t(\tilde X_t)\; \big(\tilde b(t,\tilde X_t)-b(\tilde X(t))\big)\,\dl t\Big|
\leq C\,\delta
\end{aligned}
\end{equation}
with a constant $C$ that does not depend on $\epsilon$ or $\delta$.

The second term on the right hand side of \eqref{eq:Euler_error_2} can be treated in complete analogy to the first term, this time using Lemma \ref{lem:Euler1a} and Lemma \ref{lem:Euler2} with $n=2$, yielding the estimate
\begin{equation}\label{eq:Euler_7}
\begin{aligned}
\Big|\frac12\bE\int_0^T\nabla_x^2 F^\epsilon_t(\tilde X_t)\big(\tilde \sigma^2(t,\tilde X_t)-\sigma^2(\tilde X(t))\big)\,\dl t\Big|
\leq C\,\delta
\end{aligned}
\end{equation}
with a constant $C$ that does not depend on $\epsilon$ or $\delta$. As no new arguments are needed, we omit the details of the proof of \eqref{eq:Euler_7}.

Finally, the combination of \eqref{eq:Euler_8}, \eqref{eq:Euler_error_2}, \eqref{eq:Euler_6} and \eqref{eq:Euler_7}, as the constant $C$ in  \eqref{eq:Euler_6} and \eqref{eq:Euler_7} is independent of $\epsilon$, finishes the proof.
\end{proof}

\begin{appendix}
\section{Appendix}

\begin{lemma}\label{lem:LpConvImpliesWeakConv}
Let $(B,\nnrm{\cdot}B)$ be a real Banach space, $(S,\nnrm{\cdot}S)$ a normed real vector space, and $\varphi\in C_p(B,S)$. Let $Y,\,Y_n\in L^p(\Omega;B)$, $n\in\bN$, such that $Y_n\xrightarrow{n\to\infty} Y$ in $L^p(\Omega;B)$ for all $p\geq1$. Then, for all $p\geq1$,
\[\bE(\nnrm{\varphi(Y_n)-\varphi(Y)}S^p)\xrightarrow{n\to\infty}0.\]
\end{lemma}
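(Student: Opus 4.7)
The plan is to combine convergence in probability with uniform integrability via Vitali's theorem. Fix $p\geq1$ and let $C,q\geq 1$ be such that
\[\|\varphi(y)\|_S\leq C(1+\|y\|_B^q)\qquad \text{for all }y\in B,\]
which is available because $\varphi\in C_p(B,S)$.

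\emph{Step 1 (convergence in probability).} Since $Y_n\to Y$ in $L^1(\Omega;B)$, we have $Y_n\to Y$ in $B$-probability. By continuity of $\varphi\colon B\to S$ (and the fact that continuous maps preserve convergence in probability, via an a.s.\ convergent subsequence argument), this implies $\varphi(Y_n)\to\varphi(Y)$ in $S$-probability, and hence also $\|\varphi(Y_n)-\varphi(Y)\|_S^p\to 0$ in probability.

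\emph{Step 2 (uniform integrability).} Using the polynomial growth of $\varphi$ we estimate
\[
\|\varphi(Y_n)-\varphi(Y)\|_S^p
\leq 2^{p-1}\big(\|\varphi(Y_n)\|_S^p+\|\varphi(Y)\|_S^p\big)
\leq C'\big(1+\|Y_n\|_B^{pq}+\|Y\|_B^{pq}\big),
\]
with $C'$ depending on $p$, $q$, $C$ only. By hypothesis, $Y_n\to Y$ in $L^{2pq}(\Omega;B)$, so $\sup_{n\in\bN}\bE(\|Y_n\|_B^{2pq})<\infty$. Hence $\{\|Y_n\|_B^{pq}\}_{n\in\bN}$ is bounded in $L^2(\Omega)$ and in particular uniformly integrable. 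Combined with the bound above, this shows that the family $\{\|\varphi(Y_n)-\varphi(Y)\|_S^p\}_{n\in\bN}$ is uniformly integrable.

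\emph{Step 3 (conclusion).} By Vitali's convergence theorem, convergence in probability of a uniformly integrable family implies convergence in $L^1(\Omega)$, which gives $\bE(\|\varphi(Y_n)-\varphi(Y)\|_S^p)\to 0$ as $n\to\infty$.

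The only mildly delicate point is the interplay of growth exponents in Step~2, which is handled by using the hypothesis that $Y_n\to Y$ in \emph{every} $L^r(\Omega;B)$ so that we may freely pick $r=2pq$; no further obstacle arises.
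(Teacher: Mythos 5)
Your proof is correct, but it takes a genuinely different (and somewhat more streamlined) route than the paper. The paper does not invoke Vitali's theorem; instead it decomposes $\varphi=\varphi_R+\varphi^R$ with a cut-off $\eta_R$ supported on $\{\nnrm{\cdot}{B}\leq R+1\}$. For the truncated part $\varphi_R$ it considers the bounded continuous function $\psi(x,y)=\nnrm{\varphi_R(x)-\varphi_R(y)}{S}^p$ on $B\times B$, observes that $\bP_{(Y_n,Y)}\to\bP_{(Y,Y)}$ weakly (from $L^1$-convergence of $Y_n$), and concludes $\bE\,\psi(Y_n,Y)\to\bE\,\psi(Y,Y)=0$. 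For the tail $\varphi^R$ it uses polynomial growth plus boundedness of $(1+\nnrm{Y_n}{B}^q)$ in every $L^r$ to show $\sup_n\bE\nnrm{\varphi^R(Y_n)-\varphi^R(Y)}{S}^p\to0$ as $R\to\infty$. You replace the cut-off-plus-weak-convergence step by observing directly that $\nnrm{\varphi(Y_n)-\varphi(Y)}{S}^p\to0$ in probability (via continuity and a.s.\ subsequences) and then appealing to Vitali's convergence theorem, with essentially the same uniform-integrability estimate that the paper uses for its tail term. Your version is shorter and more elementary in spirit, at the cost of citing Vitali; the paper's version is self-contained and makes explicit the role of weak convergence of the joint laws. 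Both are valid. One small remark: the hypothesis $Y_n\to Y$ in $L^p$ for \emph{all} $p\geq1$ is exactly what makes your choice $r=2pq$ harmless, as you note, and the same hypothesis is what gives the paper the needed $L^r$-boundedness of $(1+\nnrm{Y_n}{B}^q)$ for its tail estimate.
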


\begin{proof}
For $R\in(0,\infty)$ let $\eta_R\in C(B,\bR)$ be a cut-off function such that $\eta_R(x)=1$ for $\nnrm{x}B\leq R$, $\eta_R(x)=0$ for $\nnrm{x}B\geq R+1$, and  $\eta_R(B)=[0,1]$. Define $\varphi_R,\,\varphi^R\in C(B,S)$ by
\[\varphi_R:=\eta_R\,\varphi,\quad \varphi^R:=(1-\eta_R)\,\varphi.\]
We have
\begin{equation*}
\bE(\nnrm{\varphi(Y_n)-\varphi(Y)}S^p)
\leq 2^{p-1}\Big(\bE(\nnrm{\varphi_R(Y_n)-\varphi_R(Y)}S^p)+\bE(\nnrm{\varphi^R(Y_n)-\varphi^R(Y)}S^p)\Big).
\end{equation*}

To handle the term $\bE(\nnrm{\varphi_R(Y_n)-\varphi_R(Y)}S^p)$, we use $\psi\in C_b(B\times B,\bR)$ defined by
\[\psi(x,y):=\nnrm{\varphi_R(x)-\varphi_R(y)}S^p,\quad x,\,y\in B.\]
On the product space $B\times B$, we consider the product topology and the norm
$\nnrm{(x,y)}{B\times B}:=\nnrm{x}B+\nnrm{y}B$.
The convergence $\bE(\nnrm{Y_n-Y}B)\xrightarrow{n\to\infty}0$ implies that $\nnrm{(Y_n,Y)-(Y,Y)}{B\times B}=\nnrm{Y_n-Y}B\xrightarrow{n\to\infty}0$ in probability.
It follows that $\bP_{(Y_n,Y)}\xrightarrow{n\to\infty}\bP_{(Y,Y)}$ weakly, and in particular
\[\bE(\nnrm{\varphi_R(Y_n)-\varphi_R(Y)}S^p)=\bE\, \psi(Y_n,Y)\xrightarrow{n\to\infty}\bE\,\psi(Y,Y)=0.\]

To finish the proof it suffices to show that $\sup_{n\in\bN}\bE(\nnrm{\varphi^R(Y_n)-\varphi^R(Y)}S^p)$ tends to zero as $R\to\infty$. The polynomial growth of $\varphi:B\mapsto S$ implies that there exist $C,q\in[1,\infty)$ such that
\begin{align*}
\sup_{n\in\bN}\,&\bE(\nnrm{\varphi^R(Y_n)-\varphi^R(Y)}S^p)\\
&\leq C\sup_{n\in\bN}\Big(\int_{\{\nnrm{Y_n}B\geq R\}}(1+\nnrm{Y_n}B^q)\,\dl\bP+\int_{\{\nnrm{Y}B\geq R\}}(1+\nnrm{Y}B^q)\,\dl\bP\Big)\\
&\leq C \sup_{n\in\bN_0}\int_{\{\nnrm{Y_n}B\geq R\}}(1+\nnrm{Y_n}B^q)\,\dl\bP,
\end{align*}
where we have set $Y_0:=Y$.
The last term tends to zero as $R\to\infty$ since $(1+\nnrm{Y_n}B^q)_{n\in\bN_0}$ is bounded in $L^r(\Omega;\bR)$ for every $r\in[1,\infty)$ and hence uniformly integrable.
\end{proof}

\begin{lemma}\label{lem:topsup}
Under Assumption~\ref{ass:bsigma}, the topological support of $\bP_{X_T}$ in $C([0,T],\bR^d)$ is $\{x\in C([0,T],\bR^d):~x(0)=\xi_0\}$.
\end{lemma}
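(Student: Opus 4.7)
The plan is to establish the two inclusions separately. The inclusion $\mathrm{supp}(\bP_{X_T})\subseteq\{x\in C([0,T],\bR^d):x(0)=\xi_0\}$ is immediate, since the set on the right is closed under the uniform norm and has full probability (because $X(0)=\xi_0$ almost surely by construction), and the topological support is by definition the smallest closed set of full measure. So the entire content of the lemma is the reverse inclusion.

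For the reverse inclusion, it suffices to show that for every $y\in C([0,T],\bR^d)$ with $y(0)=\xi_0$ and every $\epsilon>0$,
\[
\bP\bigl(\|X-y\|_{C([0,T],\bR^d)}<\epsilon\bigr)>0.
\]
This is precisely the Stroock--Varadhan support theorem for non-degenerate diffusions. Since Assumption~\ref{ass:bsigma} provides $C^\infty$ coefficients with bounded derivatives of order $\geq 1$ together with the non-degeneracy $|\sigma(x)y|\geq c|y|$, which yields a uniform pointwise left inverse of $\sigma$, the hypotheses of a standard formulation of the support theorem are met. My proof would therefore consist in citing the appropriate source (e.g.\ the original work of Stroock--Varadhan, or a textbook formulation such as Ikeda--Watanabe).

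For completeness, the strategy underlying such a citation proceeds in three steps. First, one approximates $y$ uniformly to within $\epsilon/2$ by a smooth path $\tilde y$ satisfying $\tilde y(0)=\xi_0$; this reduces matters to smooth target paths. Second, one uses the non-degeneracy to solve pointwise for a smooth control $h:[0,T]\to\bR^m$ such that $\tilde y$ is the skeleton path associated with $h$, i.e.\ the unique classical solution of the controlled ODE
\[
\dot{\tilde y}(t)=b_0(\tilde y(t))+\sigma(\tilde y(t))\,\dot h(t),\qquad \tilde y(0)=\xi_0,
\]
where $b_0$ equals $b$ corrected by the Stratonovich drift $-\tfrac12\sum_{k=1}^m (D\sigma_{\cdot k})\,\sigma_{\cdot k}$. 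Third, one invokes the support theorem, which asserts that every skeleton path obtained in this way lies in $\mathrm{supp}(\bP_{X_T})$; combining these three steps with the triangle inequality then yields the claim.

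The main obstacle in a from-scratch proof would be the rigorous treatment of the Wong--Zakai/Stratonovich correction and the Girsanov change of measure underlying the Stroock--Varadhan result, neither of which are elementary. Since Assumption~\ref{ass:bsigma} is tailored to match the hypotheses of the standard support theorem and the lemma is flagged in the introduction as a result borrowed from the literature, my proof will be a brief paragraph verifying the hypotheses and citing the theorem rather than reproducing its (well-known) machinery.
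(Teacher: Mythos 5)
Your proposal is correct and follows essentially the same route as the paper's proof: the forward inclusion is immediate, and the reverse inclusion is obtained from the Stroock--Varadhan support theorem applied to the Stratonovich-corrected skeleton ODE, with the control constructed pointwise from the non-degeneracy of $\sigma$ and a density argument for skeleton paths; the paper cites a version of the support theorem due to Gyöngy and Pröhle and carries out the control construction explicitly via $a(x)=\sigma(x)^\top[\sigma(x)\sigma(x)^\top]^{-1}$, whereas you defer to a textbook formulation, but the substance is identical. One small imprecision: the bound $|\sigma(x)y|\geq c|y|$ directly gives a uniform \emph{left} inverse of $\sigma(x)$ (injectivity), but what the skeleton construction actually requires is a \emph{right} inverse (surjectivity, so that $\sigma(\tilde y)\dot h=\dot{\tilde y}-b_0(\tilde y)$ can be solved for $\dot h$); under Assumption~\ref{ass:bsigma} the two together force $m=d$ and $\sigma(x)$ square invertible, so the argument goes through, but it is the right inverse $\sigma^\top(\sigma\sigma^\top)^{-1}$ that does the work.
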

\begin{proof}
The statement is a straightforward consequence of a general version the Stroock-Varadhan support theorem \cite[Theorem 3.1]{Gyongy90} (for the original theorem see \cite{SW72}, see also \cite{MSS94}).
Let $H$ be the space of the absolutely continuous functions $\omega\colon[0,T]\to \bR^m$ with 
$\omega(0)=0$. For $\omega\in H$, consider the ordinary differential equation
\begin{equation}\label{eq:ode}
\begin{aligned}
\dot{x}^{\omega}(t)&=b(x^{\omega}(t))-\frac{1}{2}(\nabla \sigma)\sigma(x^{\omega}(t))+\sigma(x^\omega(t))\dot{\omega}(t)\\
x^{\omega}(0)&=\xi_0,
\end{aligned}
\end{equation}
Here the $i$-th coordinate of the vector $(\nabla \sigma)\sigma(x)\in \mathbb{R}^d$ is given by
$$
[(\nabla \sigma)\sigma(x)]_i=\sum_{k=1}^d\sum_{j=1}^m\big(\frac{\partial}{\partial x_k}\sigma_{i,j}(x)\big)\sigma_{k,j}(x).
$$ 
By \cite[Theorem 3.1]{Gyongy90}, under our assumptions on $b$ and $\sigma$, the topological support of $\bP_X$ in $(C([0,T],\bR^d),\nnrm{\cdot}\infty)$ is
the closure of the set $\{x^{\omega}\in C([0,T],\bR^d): ~\omega \in H\}$ (the factor $\frac{1}{2}$ is missing from \eqref{eq:ode} in \cite{Gyongy90} due to a typo). 
Let $x$ be an absolutely continuous function from $[0,T]$ to $\mathbb{R}^d$ with 
$x(0)=\xi_0$ and set
$a(x(s)):=\sigma(x(s))^{\top}[\sigma(x(s))\sigma(x(s))^{\top}]^{-1}$. Define
$$
\omega(t)=\int_{0}^t\big(a(x(s))\dot{x}(s)-a(x(s))b(x(s))+\frac{1}{2}a(x(s))(\nabla \sigma)\sigma(x(s))\big)\,\dl s.
$$
Then $\omega\in H$, and
$$
\dot{\omega}(t)=a(x(t))\dot{x}(t)-a(x(t))b(x(t))+\frac{1}{2}a(x(t))(\nabla \sigma)\sigma(x(t))
$$
whence
$$
\dot{x}(t)=b(x(t))-\frac{1}{2}(\nabla \sigma)\sigma(x(t))+\sigma(x(t))\dot{\omega}(t).
$$
Therefore, $$\{x\text{ is abs. continuous from } [0,T]\text{ to }\mathbb{R}^d: x(0)=\xi_0\}\subset \{x^{\omega}\in C([0,T],\bR^d): ~\omega \in H\}$$ and the statement follows by taking closures in $(C([0,T],\bR^d),\nnrm{\cdot}\infty)$.
\end{proof}

\end{appendix}

\providecommand{\bysame}{\leavevmode\hbox to3em{\hrulefill}\thinspace}

\section*{}
\noindent Mih\'{a}ly Kov\'{a}cs\\
Department of  Mathematics and Statistics\\
University of Otago \\
P.O.~Box 56, Dunedin, New Zealand\\
E-mail: mkovacs@maths.otago.ac.nz \\

\noindent Felix Lindner\\
Fachbereich Mathematik \\
Technische Universit\"{a}t Kaiserslautern\\
Postfach 3049, 67653 Kaiserslautern, Germany \\
E-mail: lindner@mathematik.uni-kl.de
\end{document}